\documentclass[12pt]{amsart}

\usepackage{amsmath}
\usepackage{amsfonts}
\usepackage{amssymb}
\usepackage{amsthm}
\usepackage{stmaryrd}
\usepackage{hyperref}
\usepackage{enumerate}
\usepackage{cleveref}
\usepackage{mathrsfs}
\usepackage{geometry}

\newtheorem{theorem}{Theorem}[section]
\newtheorem*{theorem*}{Theorem}
\newtheorem{lemma}[theorem]{Lemma}
\newtheorem{proposition}[theorem]{Proposition}
\newtheorem{corollary}[theorem]{Corollary}



\Crefname{conjecture}{Conjecture}{Conjectures}

\theoremstyle{remark}
\newtheorem*{remark*}{Remark}
\newtheorem{remark}[theorem]{Remark}

\theoremstyle{plain}
\newtheorem{example}[theorem]{Example}
\theoremstyle{plain}


\newcommand{\N}{\mathbb{N}}

\newcommand{\Z}{\mathbb{Z}}

\newcommand{\Q}{\mathbb{Q}}
\newcommand{\R}{\mathbb{R}}
\newcommand{\C}{\mathbb{C}}

\newcommand{{\D}}{\delta}
\newcommand{\F}{\mathbb{F}}

\newcommand{\calO}{\mathcal{O}}

\newcommand{\Stab}{\operatorname{Stab}}
\newcommand{\eps}{\varepsilon}

\newcommand{\SL}{\operatorname{SL}}

\newcommand{\SLZ}{\SL_2(\Z)}
\newcommand{\abcd}[4]{\left(\begin{smallmatrix} #1 & #2 \\ #3 & #4 \end{smallmatrix}\right)}

\newcommand{\calM}{\mathcal{M}}

\newcommand{\HH}{\mathfrak{H}}
\newcommand{\Zed}{\mathfrak Z}
\newcommand{\Zhat}{\widehat\Zed}
\newcommand{\Norm}{\mathsf N}
\renewcommand{\span}{\operatorname{span}}
\newcommand{\Eich}{\mathcal{E}}
\newcommand{\fraka}{\mathfrak{a}}
\newcommand{\calJ}{\mathcal{J}}
\newcommand{\ch}{\operatorname{ch}}
\newcommand{\orb}{\operatorname{orb}}

\newcommand{\bbM}{\mathbb{M}}

\numberwithin{equation}{section}
\numberwithin{table}{section}

\author{Lea Beneish and Michael H. Mertens}
\address{Department of Mathematics and Computer Science, Emory University, 400 Dowman Drive, 30322 Atlanta, GA } 
\email{lea.beneish@emory.edu}
\address{Max-Planck-Institut f\"ur Mathematik, Vivatsgasse 7, 53111 Bonn, Germany} 
\email{mhmertens@mpim-bonn.mpg.de}

\title[Weierstrass mock modular forms and VOAs]{On Weierstrass mock modular forms and a dimension formula for certain vertex operator algebras}

\subjclass[2010]{11F03,11F22,17B69}
\keywords{Weierstrass mock modular forms, VOAs, orbifold theories}

\begin{document}
\begin{abstract}
Using techniques from the theory of mock modular forms and harmonic Maa{\ss} forms, especially Weierstrass mock modular forms, we establish several dimension formulas for certain holomorphic, strongly rational vertex operator algebras, complementing previous work by van Ekeren, M\"oller, and Scheithauer.
\end{abstract}
\maketitle

\section{Introduction and statement of results}

Much of the motivation to study vertex operator algebras (VOAs) originates from work explaining observations originally made by McKay and Thompson \cite{Thompson1,Thompson2} on apparent relations between the representation theory of the Fischer-Griess Monster $\bbM$, the largest of the sporadic simple groups, and the modular $J$-function. Conway and Norton \cite{CN} later extended the original observations to the famous Monstrous moonshine conjecture. Frenkel, Lepowsky, and Meurman \cite{FLM84, FLM83, FLM} were the first to construct a vertex operator algebra $V^\natural$ whose character is the $J$-function and on which $\bbM$ acts as automorphisms. Borcherds \cite{Borcherds} later confirmed that the graded characters of $V^\natural$ coincide with certain Hauptmoduln identified by Conway and Norton \cite{CN}. Since then, VOAs and related structures have played a central role in various other instances of moonshine. The most illustrious cases of this include Norton's \emph{generalized moonshine} \cite{Norton,Norton2}, which has been proven in general by Carnahan \cite{Carnahan}. A first special case of this, now called \emph{Baby Monster moonshine} was established earlier by H\"ohn \cite{Hoehn}. Other instances of moonshine related to VOAs include \emph{Conway moonshine} \cite{DM} and various instances of \emph{umbral moonshine}  \cite{ACH,CD2,DD,DH}.

In recent new developments of moonshine, new connections to the arithmetic of elliptic curves have been discovered \cite{Beneish1,DMOnat,DMO}. In \cite{DMO} for instance, the existence of a representation for the sporadic O'Nan group has been established, which controls ranks and $p$-torsion in Selmer groups or Tate-Shafarevich groups of quadratic twists of certain elliptic curves (for a precise statement, see \cite[Theorems 1.3 and 1.4]{DMO}. See also \cite[Theorem 7.1]{D}.). Similar connections have been established for example in \cite{Beneish1} using quasimodular forms of weight $2$, along with a construction of the corresponding module as a VOA. In this work, we continue the mantra of connecting VOAs and arithmetic, but in a somewhat different vein.

In the context of classifying holomorphic, strongly rational VOAs of central charge $24$ (see \Cref{secVOA} for definitions of these terms) and proving the ``completeness'' of a list of $71$ Lie algebras devised by Schellekens \cite{Schellekens}, which is now known to contain every possible $V_1$-space of such a VOA by work of various people (see for instance the introduction of \cite{MSV} for references), van Ekeren, M\"oller, and Scheithauer \cite{MSV} find a dimension formula for orbifold VOAs of central charge $24$. A special case of their formula had previously been established by M\"oller in his thesis \cite{Mollerthesis} and for the sake of simplicity, we only give this special case here. The reader is referred to \Cref{secVOA} and the references given there for the relevant definitions.
\begin{theorem*}
Let $V$ be a holomorphic, strongly rational VOA additionally satisfying the positivity assumption and let $G=\langle g\rangle$ be a cyclic group of automorphisms of $V$ of order $N$ with $g$ of type $N\{0\}$. Denote by $V^G$ the fixed point VOA of $V$ under the action of $G$ and let $̃V^{\orb(g)}$ be the orbifold vertex operator algebra. Furthermore, assume that $V$ has central charge $c= 24$. Then for $N= 2,3,5,7,13$, we have the dimension formula
$$\dim V_1 + \dim ̃V^{\orb(g)}_1 = 24 + (N+ 1) \dim V^G_1-\frac{24}{N-1}\sum_{k=1}^{N-1} \sigma(N-k)\sum_{i\in\Z/N\Z \setminus\{0\}}\dim V(g^i)_{k/N},$$
where $\sigma(m)=\sum_{d\mid m} d$ denotes the usual divisor sum function.
\end{theorem*}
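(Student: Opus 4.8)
The plan is to reduce the asserted identity to a residue computation on a modular curve of level $N$, exploiting that $N\in\{2,3,5,7,13\}$ are precisely the primes with $N-1\mid 24$, equivalently with $\Gamma_0(N)$ of genus $0$ (so $S_2(\Gamma_0(N))=\{0\}$). Since $V$ and $\widetilde{V}^{\orb(g)}$ are holomorphic, strongly rational of central charge $24$, their graded characters are hauptmoduln for $\SL_2(\Z)$, so $\ch_V=J+\dim V_1$ and $\ch_{\widetilde{V}^{\orb(g)}}=J+\dim\widetilde{V}^{\orb(g)}_1$ with $J=q^{-1}+O(q)$. I would introduce the twisted/twining partition functions $Z_{i,j}(\tau)=\operatorname{Tr}_{V(g^i)}\!\bigl(g^jq^{L_0-1}\bigr)$ for $i,j\in\Z/N\Z$ (so $Z_{0,0}=\ch_V$), and use the following consequences of $g$ being of type $N\{0\}$ together with the positivity assumption and the cited orbifold theory: the $Z_{i,j}$ with $(i,j)\neq(0,0)$ form a weight-$0$ vector-valued modular form for $\SL_2(\Z)$ with trivial multiplier system (for $N$ prime a single $\SL_2(\Z/N\Z)$-orbit); $\ch_{V^G}=\tfrac1N\sum_jZ_{0,j}$ and $\ch_{\widetilde{V}^{\orb(g)}}=\tfrac1N\sum_{i,j}Z_{i,j}$; $V(g^i)_0=0$ for $i\neq0$, so each $\ch_{V(g^i)}$ has a $q^{1/N}$-expansion with lowest exponent $>-1$; and $\widetilde{V}^{\orb(g)}_1=V^G_1\oplus\bigoplus_{i=1}^{N-1}V(g^i)_1$ as vector spaces (these weight-one twisted states are $g$-fixed because $g^i$ acts on $V(g^i)$ as $e^{2\pi iL_0}$ by the type assumption and $g^i$ generates $\langle g\rangle$). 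Using the last fact to eliminate $\dim\widetilde{V}^{\orb(g)}_1$, the theorem becomes equivalent to
\[\dim V_1+\sum_{i=1}^{N-1}\dim V(g^i)_1=24+N\dim V^G_1-\frac{24}{N-1}\sum_{k=1}^{N-1}\sigma(N-k)\sum_{i\neq0}\dim V(g^i)_{k/N}.\]

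To prove this I would pair the weight-$0$ function $G(\tau):=\sum_{i=1}^{N-1}\ch_{V(g^i)}(\tau)=\sum_{i\neq0}Z_{i,0}(\tau)$, which is modular on the Fricke conjugate $\Gamma^0(N)$ of $\Gamma_0(N)$, against the weight-$2$ Eisenstein series $\mathcal{E}_N(\tau):=\tfrac1{N-1}\bigl(NE_2(N\tau)-E_2(\tau)\bigr)$. The genus-$0$ hypothesis is exactly what keeps the weight-$2$ story trivial: $\mathcal{E}_N$ spans all of $M_2(\Gamma_0(N))$, it equals, up to sign, the logarithmic derivative $\tfrac1{2\pi i}\frac{d}{d\tau}\log t_N$ of the hauptmodul $t_N=(\eta(\tau)/\eta(N\tau))^{24/(N-1)}$ of $\Gamma_0(N)$ — the ``Weierstrass''/Eichler-integral incarnation of $\mathcal{E}_N$ — and its Fourier expansion begins $\mathcal{E}_N=1+\tfrac{24}{N-1}q+\tfrac{24}{N-1}\sigma(2)q^2+\dots+\tfrac{24}{N-1}\sigma(N-1)q^{N-1}+O(q^N)$.

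The core step is the residue theorem applied to the meromorphic differential $\omega=\mathcal{E}_N(\tau/N)\,G(\tau)\,d\tau$ on the modular curve of $\Gamma^0(N)$ (genus $0$, two cusps for $N$ prime): $\omega$ is holomorphic on $\HH$ with poles only at the cusps, so its residues sum to zero. At $\infty$, in the local parameter $q^{1/N}$, the constant term of $\mathcal{E}_N(\tau/N)G(\tau)$ equals $\sum_{i\neq0}\dim V(g^i)_1$ (product of the two constant terms) plus $\tfrac{24}{N-1}\sum_{k=1}^{N-1}\sigma(N-k)\sum_{i\neq0}\dim V(g^i)_{k/N}$ (the first $N-1$ Fourier coefficients of $\mathcal{E}_N$ against the fractional principal part of $G$; $V(g^i)_0=0$ kills the remaining cross terms). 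Transforming by $S$, which carries the cusp $0$ to the width-$1$ cusp $\infty$, replaces $G(\tau)$ by $G(-1/\tau)=\sum_{j\neq0}Z_{0,j}(\tau)=N\ch_{V^G}(\tau)-\ch_V(\tau)=(N-1)q^{-1}+(N\dim V^G_1-\dim V_1)+O(q)$ and, after absorbing the automorphy factor, $\mathcal{E}_N(\tau/N)$ by $-N\mathcal{E}_N(\tau)$, so the constant term there is a nonzero multiple of $N\dim V^G_1-\dim V_1+24$ — the $24$ arising as $[q^1]\mathcal{E}_N=\tfrac{24}{N-1}$ times the order-$(N-1)$ pole. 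Equating the two contributions (weighted by the respective cusp widths, so that the residues cancel) yields the displayed identity, and substituting $\dim\widetilde{V}^{\orb(g)}_1=\dim V^G_1+\sum_{i=1}^{N-1}\dim V(g^i)_1$ turns $N\dim V^G_1$ into $(N+1)\dim V^G_1$, which is the theorem.

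Once the modular objects are set up, the residue and Fourier-coefficient bookkeeping is essentially forced; the main obstacles — which I would handle by invoking the relevant structure theory — are (a) pinning down the precise modular transformation behavior of the twisted partition functions at all cusps (weights, cusp widths, and especially the vanishing of multiplier systems, where the type-$N\{0\}$ hypothesis is indispensable), and (b) the structural identification of $\widetilde{V}^{\orb(g)}_1$ inside the twisted modules. I expect (a) to be the genuine technical sticking point. For general $N$, where $S_2(\Gamma_0(N))\neq\{0\}$, the same scheme applies with $\mathcal{E}_N$ replaced by the full weight-$2$ Eisenstein subspace, and the cuspidal part of the weight-$2$ space must then be controlled through the Weierstrass mock modular forms attached to the relevant newforms — the broader theme of this paper.
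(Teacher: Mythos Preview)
Your argument is correct and gives the stated formula, but it follows a different route from the one the paper describes for this result and carries out in detail for its genus-$1$ analog (\Cref{thmdim}). The paper works throughout with $\ch_{V^G}$ as a $\Gamma_0(N)$-modular function: one matches its principal parts at all cusps against an explicit spanning set of weakly holomorphic modular functions (the Hauptmodul in genus $0$, Weierstrass mock modular forms under Hecke and Atkin--Lehner operators in genus $1$), then invokes \Cref{propEMS} in the form $F_\infty+F_0=\ch_V+\ch_{V^{\orb(g)}}$ and reads off constant terms. In particular the paper never needs your structural identification $V^{\orb(g)}_1=V^G_1\oplus\bigoplus_{i\neq0}V(g^i)_1$; that decomposition is correct for prime $N$ (weight-$1$ states in $V(g^i)$ lie in $W^{(i,0)}$ since their $L_0$-eigenvalue is integral), but the paper's route bypasses it.

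Your residue computation with $\mathcal{E}_N$ is essentially the Bruinier--Funke pairing argument the paper uses for \Cref{thmdim2}, with the newform replaced by the Eisenstein series --- a substitution the paper explicitly flags after stating \Cref{thmdim2}. What your approach buys is transparency (the $\sigma(N-k)$ and the constant $24$ are visibly the first Fourier coefficients of $\mathcal{E}_N$) and, more notably, generality: nothing in your residue step actually uses genus $0$. The identity $\sum_\fraka\Res_\fraka\omega=0$ holds on $X^0(N)$ for any $N$, and $\mathcal{E}_N$ with the stated Fourier expansion exists for every prime $N$. So your computation in fact proves the formula for \emph{all} primes $N$, recovering the more general result of M\"oller--Scheithauer mentioned in the introduction. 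The genus-$0$ remarks you make (that $\mathcal{E}_N$ spans $M_2(\Gamma_0(N))$, that it is the logarithmic derivative of the eta-quotient Hauptmodul) are correct but not used; your closing comment that in higher genus ``the cuspidal part must be controlled through the Weierstrass mock modular forms'' conflates the paper's weight-$0$ Hauptmodul-replacement strategy with your own weight-$2$ pairing, which needs no such control.
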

The proof of this result and of the extension of the result in \cite{MSV} for all $N$ such that the modular curve $X_0(N)$ has genus $0$, i.e. $N\in\{2,...,10,12,13,16,18,25\}$, is essentially obtained by writing the character $\ch_{V^G}$ explicitly in terms of the Hauptmodul for the group $\Gamma_0(N)$.

Our main result is an extension of the dimension formula in \cite{MSV} to levels $N$ where there is no Hauptmodul, but rather where the modular curve $X_0(N)$ has genus $1$. In those cases, the modular curve is an elliptic curve $E$ of conductor $N$ defined over $\Q$, which (over $\C$) is isomorphic to the torus $\C/\Lambda_E$ for a full lattice $\Lambda_E\subset\C$ called the period lattice of $E$. Denote by $\widehat\zeta(\Lambda_E;z)$ the associated completed Weierstrass zeta function (see \Cref{secWeierstrass} for the precise definition). For simplicity, we state the theorem just for the prime levels in question. The analogous statement for square-free composite levels is given in \Cref{thmdimcomp}.
\begin{theorem}\label{thmdim}
Let $V$ be a holomorphic, strongly rational vertex operator algebra of central charge $24$. Let $G=\langle g \rangle$ be a cyclic group of automorphisms of $V$ of order $p\in\{11,17,19\}$ such that $g$ is of type $p\{0\}$. Further let $E=X_0(p)$ be the $\Gamma_0(p)$-optimal elliptic curve of conductor $p$. Then with the assumptions and notations in \Cref{secVOA}, we have the following dimension formula:
\begin{multline*}
\dim V_1+\dim V_1^{\orb(g)}=(p+1)\dim V_1^G-(p-1)C_E\\
+C_E\sum_{i=1}^{p-1}\sum_{j=1}^{p-1} \sigma(p-j)\dim V(g^i)_{j/p},
\end{multline*}
where we set
$$C_E:=-\frac{3-\#E(\F_2)}{2}-\widehat\zeta\left(\Lambda_E;L(E,1)\right).$$
\end{theorem}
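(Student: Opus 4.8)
The plan is to adapt to genus $1$ the approach used by van Ekeren--M\"oller--Scheithauer in the genus-$0$ case, replacing the (now nonexistent) Hauptmodul of $\Gamma_0(p)$ by the Weierstrass mock modular form of the modular elliptic curve $E=X_0(p)$. I would begin from the orbifold character calculus recalled in \Cref{secVOA}: since $g$ is of type $p\{0\}$, the orbifold $V^{\orb(g)}$ is again holomorphic of central charge $24$, one has $(V^{\orb(g)})^{\langle\widetilde g\rangle}=V^G$, and the twisted--twined trace functions $T_{(i,j)}(\tau)$ --- the graded trace of a fixed lift of $g^j$ on the $g^i$-twisted module $V(g^i)$ --- transform, up to a multiplier, by $T_{(i,j)}|\gamma=T_{(i,j)\gamma}$ under $\SL_2(\Z)$. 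Hence $\ch_{V^G}=\tfrac1p\sum_{j}T_{(0,j)}$ is a weakly holomorphic modular function of weight $0$ for $\Gamma_0(p)$, holomorphic on $\HH$, with a simple pole at the cusp $\infty$ and a pole of order $p$ at the cusp $0$; from $\ch_{V^G}=q^{-1}+\dim V_1^G+O(q)$ and from $\ch_{V^G}|S=\tfrac1p\big(\ch_V+\sum_{i\neq0}\ch_{V(g^i)}\big)$ one reads off that the polar coefficients at $0$ are linear in the $\dim V(g^i)_{j/p}$ and that the constant term at $0$ equals $\tfrac1p\big(\dim V_1+\dim V_1^{\orb(g)}-\dim V_1^G\big)$.

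The core of the proof is to write $\ch_{V^G}$ down explicitly. Since $p\in\{11,17,19\}$ are exactly the prime levels of genus $1$, $E=X_0(p)$ is an optimal elliptic curve: the modular parametrization $\varphi_E\colon X_0(p)\xrightarrow{\ \sim\ }\C/\Lambda_E$ is an isomorphism, given in coordinates by the normalized Eichler integral $\mathcal{E}_E(\tau)=-2\pi i\int_\tau^{i\infty}f_E(w)\,dw=\sum_{n\ge1}\tfrac{a(n)}{n}q^n$ of the weight-$2$ newform $f_E$ of $E$ (Manin constant $1$), with $\varphi_E(i\infty)=0$ and $\varphi_E(0)=\mathcal{E}_E(0)=L(E,1)\neq0$. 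Pulled down to $\C/\Lambda_E$, $\ch_{V^G}$ is an elliptic function with a simple pole at $0$ and a pole of order $p$ at $L(E,1)$, hence by Riemann--Roch a $\C$-linear combination of $1$, of the difference $\widehat\zeta(\Lambda_E;z)-\widehat\zeta(\Lambda_E;z-L(E,1))$ (which is genuinely $\Lambda_E$-periodic, the quasi-periods cancelling), and of the derivatives $\wp^{(k)}(\Lambda_E;z-L(E,1))$ for $k=0,\dots,p-2$. Substituting $z=\mathcal{E}_E(\tau)$ turns each summand into a function of $\tau$: by \Cref{secWeierstrass}, $\widehat\zeta(\Lambda_E;\mathcal{E}_E(\tau))$ is the completed Weierstrass mock modular form of $E$, and in the above combination its non-holomorphic part cancels, so that $\ch_{V^G}(\tau)=c_0+c_1\big(\widehat\zeta(\Lambda_E;\mathcal{E}_E(\tau))-\widehat\zeta(\Lambda_E;\mathcal{E}_E(\tau)-L(E,1))\big)+\sum_{k=0}^{p-2}d_k\,\wp^{(k)}(\Lambda_E;\mathcal{E}_E(\tau)-L(E,1))$ for suitable constants, the value $L(E,1)\notin\Lambda_E$ guaranteeing that all of these objects are finite at the cusp $0$.

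It then remains to pin down the $p+1$ constants and extract the formula by matching $q$-expansions at the two cusps. At $\infty$, $\mathcal{E}_E(\tau)=q+\tfrac{a(2)}2q^2+\cdots$ and $\zeta(\Lambda_E;z)=z^{-1}+O(z^3)$ give $\widehat\zeta(\Lambda_E;\mathcal{E}_E(\tau))=q^{-1}-\tfrac{a(2)}2+O(q)$ with $a(2)=3-\#E(\F_2)$ by Eichler--Shimura, while the $\wp^{(k)}$-terms are holomorphic there; matching the $q^{-1}$- and $q^{0}$-coefficients ($=1$ and $=\dim V_1^G$) determines $c_1$ and expresses $c_0$ through $\dim V_1^G$, through the constant $C_E$ --- which is exactly the difference of the constant terms of $\widehat\zeta(\Lambda_E;\mathcal{E}_E(\tau))$ at the two cusps --- and through the $d_k$. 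At the cusp $0$, using the Fricke functional equation $\mathcal{E}_E(\tau)-L(E,1)=\pm\mathcal{E}_E(-1/(p\tau))$ (from $f_E|W_p=\pm f_E$), a parallel Laurent expansion of the $\wp^{(k)}$ lets me solve recursively for $d_{p-2},\dots,d_0$ in terms of the $\dim V(g^i)_{j/p}$; reading off the constant term at $0$, identified by the character calculus with $\tfrac1p(\dim V_1+\dim V_1^{\orb(g)}-\dim V_1^G)$, yields the asserted identity after simplification. The divisor sums $\sigma(p-j)$ arise because the coefficients produced by combining the Laurent expansions of $\wp,\zeta$ with that of the weight-$2$ Eichler integral are precisely such convolution sums, and the common prefactor $C_E$ is what survives from the two boundary values of the Weierstrass mock modular form.

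The step I expect to be the main obstacle is this last expansion at the cusp $0$. In the genus-$0$ setting $\ch_{V^G}$ is a Laurent polynomial in a Hauptmodul and its constant term is extracted by pure algebra, whereas here the building blocks $\widehat\zeta(\Lambda_E;\mathcal{E}_E(\tau))$ and $\wp^{(k)}(\Lambda_E;\mathcal{E}_E(\tau)-L(E,1))$ are modular only for $\Gamma_0(p)$, and their behaviour under the Fricke involution is controlled by the period and by the non-holomorphic completion of the Eichler integral --- which is precisely why harmonic Maa{\ss} forms and Weierstrass \emph{mock} modular forms, rather than plain elliptic functions, must enter. Carrying out this expansion cleanly and matching it to the VOA-theoretic quantity $\tfrac1p(\dim V_1+\dim V_1^{\orb(g)}-\dim V_1^G)$ is the technical heart; a secondary nuisance is keeping the multipliers and conformal-weight shifts in the orbifold character identities straight, so that every factor $\tfrac1p$, each range of summation, and the index $p-j$ in $\sigma(p-j)$ come out exactly as stated.
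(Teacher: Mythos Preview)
Your overall strategy matches the paper's: express $\ch_{V^G}$ explicitly as a weight-$0$ function for $\Gamma_0(p)$, then compare constant terms at the two cusps via the orbifold identity of \Cref{propEMS}. You also correctly identify $C_E$ as the difference of the constant terms of $\Zhat_E$ at $\infty$ and at $0$, and your identification of the constant term of $\ch_{V^G}|S$ with $\tfrac1p(\dim V_1+\dim V_1^{\orb(g)}-\dim V_1^G)$ is exactly what the paper deduces from \Cref{propEMS}.

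The decomposition you use, however, is genuinely different from the paper's, and the gap lies precisely where you locate ``the main obstacle''. The paper does \emph{not} use the elliptic-function basis $\{1,\ \zeta(z)-\zeta(z-L(E,1)),\ \wp^{(k)}(z-L(E,1))\}$. Instead it invokes \Cref{thmspan}: the Maa{\ss}--Poincar\'e series with principal part $q_0^{-m}$ at the cusp $0$ is, up to an additive constant, $m\,\Zhat_E|W_p|T_m$ for $\gcd(m,p)=1$, and its constant term at either cusp is then $\sigma(m)$ times the constant term of $\Zhat_E|W_p$ (resp.\ of $\Zhat_E$), since in weight $0$ the operator $T_m$ multiplies the constant term by $\sigma(m)/m$. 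This Hecke action is the entire mechanism by which the divisor sum enters the formula; it rests on \Cref{lemHeckePoincare} (and ultimately on Selberg's identity for Kloosterman sums).

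In your basis the coefficients $d_k$ are determined by the principal part at $0$ and hence are linear in the $\dim V(g^i)_{j/p}$, but the constant terms you must then insert are the specific values $\wp^{(k)}(-L(E,1))$ at $\infty$ and Laurent coefficients of $\wp^{(k)}(\pm\mathcal{E}_E)$ at $0$. These involve the Eisenstein series $G_{2n}(\Lambda_E)$ and the Hecke eigenvalues $a_E(n)$ of $f_E$, not divisor sums; your assertion that ``$\sigma(p-j)$ arises from convolution sums'' is not substantiated, and I see no direct mechanism producing $\sigma$ from these ingredients. Your route would therefore yield \emph{a} dimension formula, but establishing that its coefficients collapse to $\sigma(p-j)\,C_E$ as stated in \Cref{thmdim} amounts to re-deriving the Hecke--Poincar\'e relation by other means, which you have not done.
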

In particular, this dimension formula relates invariants of the underlying modular curve to the theory of VOAs. We can now exploit knowledge about arithmetic properties of these invariants to derive a simpler dimension formula in the following way.

To the best of the authors' knowledge the question of the rationality of the value $\widehat\zeta\left(\Lambda_E;L(E,1)\right)$ has not been investigated so far. It is known due to a classical result of Schneider \cite[Chapter II, \S 4, Satz 15]{Schneider} that the value of the uncompleted zeta function $\zeta\left(\Lambda_E;L(E,1)\right)$ is in fact transcendental. Since all the other quantities in the above dimension formula are clearly rational, we obtain the following immediate corollary.
\begin{corollary}\label{cordim}
Assume the notations as in \Cref{thmdim}. If we have
$$\sum_{i=1}^{p-1}\sum_{j=1}^{p-1} \sigma(p-j)\dim V(g^i)_{j/p}\neq p-1$$
for some VOA $V$ as in \Cref{thmdim}, then the value $\widehat\zeta\left(\Lambda_E;L(E,1)\right)$ is rational.
\end{corollary}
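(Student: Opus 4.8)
The plan is to derive \Cref{cordim} purely formally from the dimension formula of \Cref{thmdim}. First I would rearrange that identity to isolate the quantity of interest: moving $(p+1)\dim V_1^G$ to the left and factoring out $C_E$ on the right yields
$$\dim V_1+\dim V_1^{\orb(g)}-(p+1)\dim V_1^G = C_E\left(\sum_{i=1}^{p-1}\sum_{j=1}^{p-1}\sigma(p-j)\dim V(g^i)_{j/p}-(p-1)\right).$$
Every summand on the left is a non-negative integer because $V_1$, $V_1^{\orb(g)}$, and $V_1^G$ are finite-dimensional $\C$-vector spaces (here strong rationality, which forces the graded pieces to be finite-dimensional, is used), and the bracketed factor on the right is an integer since the weights $\sigma(p-j)$ are positive integers and the $\dim V(g^i)_{j/p}$ are non-negative integers. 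Under the hypothesis of the corollary this bracketed factor is nonzero, so dividing through expresses $C_E$ as a quotient of two integers, and hence $C_E\in\Q$.

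It then remains only to transfer rationality from $C_E$ to the completed Weierstrass zeta value. Since $C_E=-\tfrac{3-\#E(\F_2)}{2}-\widehat\zeta(\Lambda_E;L(E,1))$ and $\#E(\F_2)\in\Z$, so that $\tfrac{3-\#E(\F_2)}{2}\in\Q$, we conclude $\widehat\zeta(\Lambda_E;L(E,1))=-\tfrac{3-\#E(\F_2)}{2}-C_E\in\Q$, which is exactly the assertion.

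Once \Cref{thmdim} is available there is no serious obstacle; the only point that needs a moment's care is the integrality bookkeeping — verifying that all of the VOA-theoretic dimensions appearing really are honest non-negative integers, which is precisely where the finiteness encoded in the strong rationality assumption enters — together with the (trivial) observation that it suffices to exhibit a single VOA $V$ for which the displayed inequality holds, since the statement is conditional and asserts no existence.
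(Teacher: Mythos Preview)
Your argument is correct and matches the paper's reasoning exactly: the paper simply observes that all quantities in the dimension formula of \Cref{thmdim} other than $C_E$ are rational, so whenever the coefficient of $C_E$ is nonzero one solves for $C_E\in\Q$ and hence $\widehat\zeta(\Lambda_E;L(E,1))\in\Q$. Your write-up just makes this explicit, including the integrality of the various dimensions.
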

Computing the zeta values in \Cref{cordim} numerically, we find that within computational precision $\widehat\zeta\left(\Lambda_E;L(E,1)\right)=17/5,\ 2,\ 4/3$ for $p=11,\ 17,\ 19$, respectively.

In a very recent preprint \cite{EMS2}, M\"oller, and Scheithauer independently find a completely general dimension formula like the one in \Cref{thmdim} with no restriction on the order of the cyclic group $G$ using expansions of vector-valued Eisenstein series. In particular, their general result simplifies to the statement of the Theorem from \cite{Mollerthesis} quoted above for all primes $p$. 

In the proof of Theorem 4.12 of loc. cit., M\"oller and Scheithauer show that if the order of the automorphism group $G$ is any prime $p$ such that the genus of $X_0(p)$ is greater than zero, one obtains the lower bound 
$$\sum_{i=1}^{p-1}\sum_{j=1}^{p-1} \sigma(p-j)\dim V(g^i)_{j/p}\geq p-1.$$
As M\"oller and Scheithauer informed us, they have produced --- using both computer calculations and theoretical considerations based on work by Chenevier and Lannes \cite{CL19} on $p$-neighbours of Niemeier lattices --- explicit examples of suitable VOAs for which the above inequality is strict, wherefore according to \Cref{cordim} the values $\widehat\zeta\left(\Lambda_E;L(E,1)\right)$ are indeed rational. In fact, comparing to the (extended) version of M\"oller's result and using the examples M\"oller and Scheithauer have constructed, one finds the stronger statement that we have indeed the following identity for the constant $C_E$ from \Cref{thmdim},
\begin{gather}
C_E=-\frac{24}{p-1}.
\end{gather}

From the proof of \Cref{thmdim} we can infer the following dimension formula as well, which looks similar to that in \Cref{cordim}. The proof relies on the so-called Bruinier-Funke pairing (see \Cref{propBF}).
\begin{theorem}\label{thmdim2}
Assume the hypotheses and notation from \Cref{thmdim}, except that $p$ may now denote any prime number, and let $f(\tau)=\sum_{n=1}^\infty a(n)e^{2\pi in\tau}\in S_2(p)$ be a newform with Atkin-Lehner eigenvalue $\eps\in\{\pm 1\}$. Then we have 
$$\sum_{i=1}^{p-1}\sum_{j=1}^{p-1} a(p-j)\dim V(g^i)_{j/p}=-\eps p-a(p).$$
\end{theorem}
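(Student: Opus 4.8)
The plan is to run the same argument that proves \Cref{thmdim}, but to pair the character $\ch_{V^G}$ against a cusp form rather than against a weight-$2$ Eisenstein-type object. Recall that the input to the dimension formulas is the identity expressing $\ch_{V^G}(\tau)$ (suitably normalized, after subtracting its principal part and the contributions already accounted for) in terms of the geometry of $X_0(p)$; this is a harmonic Maa{\ss} form of weight $0$, or equivalently, after applying $\xi_0$ or differentiating, gives rise to a weight-$2$ object whose holomorphic part records the twisted dimensions $\dim V(g^i)_{j/p}$. First I would recall the explicit $q$-expansion: the relevant generating function has coefficients essentially $\sum_{i}\dim V(g^i)_{j/p}$ in degree $j/p$, together with an explicit principal part dictated by the type $p\{0\}$ condition and the central charge $24$ (this is exactly the bookkeeping already done in the proof of \Cref{thmdim}). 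Then I would invoke the Bruinier--Funke pairing (\Cref{propBF}): pairing the nonholomorphic modular object attached to $\ch_{V^G}$ with the newform $f\in S_2(p)$ gives, on one side, a finite sum over the principal part — which by the shape of $f$'s $q$-expansion $\sum a(n)q^n$ collapses to the linear combination $\sum_{i,j}a(p-j)\dim V(g^i)_{j/p}$ plus lower-order explicit terms — and on the other side, a period/regularized-integral expression for $f$ against the relevant weight-$2$ Eisenstein series for $\Gamma_0(p)$.

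The key step is then to evaluate that pairing. The Bruinier--Funke pairing of $f$ against the completed Eisenstein object will, by the standard unfolding argument, reduce to the value of $f$ at the relevant cusp data, i.e. to $L(f,1)$-type quantities and to the constant term $a(p)$ coming from the $U_p$/Atkin--Lehner interaction at level $p$. Here the Atkin--Lehner eigenvalue $\eps$ of $f$ enters: under the Fricke involution $W_p$, $f|W_p = -\eps\, f$ (with the sign convention making the functional equation sign $\eps$), and the two cusps $0$ and $\infty$ of $X_0(p)$ contribute $a(p)$ and $\eps$-weighted versions of the same data. Carefully tracking these two cusp contributions — using that $f$ is a normalized newform so its first coefficient is $1$ and that the Hecke eigenvalue $a(p)=\pm 1$ is tied to $\eps$ via $a(p) = -\eps$ when ... (actually the relation $W_p f = -a(p)\sqrt{p}\,(\text{stuff})$ must be unwound cleanly) — should yield exactly the right-hand side $-\eps p - a(p)$. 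The cleanest route is probably to compare directly with the constant $C_E$ computation in \Cref{thmdim}: there the pairing was against $\widehat\zeta(\Lambda_E;\cdot)$, whose ``holomorphic derivative'' is a weight-$2$ Eisenstein series on $X_0(p)$, and the value $-\tfrac{3-\#E(\F_2)}{2}$ is precisely an $L$-value / local factor at $2$; replacing the weight-$2$ Eisenstein series by the weight-$2$ cusp form $f$ and redoing the same contour computation changes the local data from the Eisenstein $L$-factor to the cusp-form Hecke eigenvalue, producing $-\eps p - a(p)$ in place of the $C_E$-terms.

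The main obstacle I anticipate is getting the normalization and sign conventions exactly right at the cusp $0$: the Bruinier--Funke pairing naturally lives at the cusp $\infty$, so transporting the computation to the second cusp of $X_0(p)$ requires applying the Fricke involution to both $f$ and the weight-$2$ object attached to $\ch_{V^G}$, and the behavior of the character (and of the Weierstrass $\widehat\zeta$-function / Eisenstein series) under $W_p$ must be pinned down — this is where the factor $p$ (from the width/scaling of the cusp $0$, i.e. from $\tau\mapsto -1/(p\tau)$) and the eigenvalue $\eps$ combine into $-\eps p$. Once the $W_p$-equivariance of all ingredients is recorded (most of which is implicit in the proof of \Cref{thmdim}), the rest is the finite combinatorial identification of the principal-part sum with $\sum_{i,j}a(p-j)\dim V(g^i)_{j/p}$, which is immediate from the indexing, plus collecting the constant terms into $a(p)$.
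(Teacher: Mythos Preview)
Your instinct to use the Bruinier--Funke pairing with the newform $f$ is exactly what the paper does, and you correctly identify that the principal-part side produces the sum $\sum_{i,j}a(p-j)\dim V(g^i)_{j/p}$ together with contributions governed by the Atkin--Lehner eigenvalue $\eps$ at the cusp $0$. However, you are missing the single observation that makes the argument a three-line proof rather than a computation: $\ch_{V^G}$ is not a genuine harmonic Maa{\ss} form with a nonholomorphic part --- it is a \emph{weakly holomorphic} modular function, i.e.\ it lies in $M_0^!(p)$. Consequently $\xi_0(\ch_{V^G})=0$, and therefore
\[
\{f,\ch_{V^G}\}=\langle f,\xi_0\ch_{V^G}\rangle=0.
\]
There is no ``completed Eisenstein object'', no regularized integral, no unfolding, and no $L(f,1)$-type quantity to evaluate; the pairing is identically zero for the trivial reason.

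All that remains is to write out the other side of \Cref{propBF}, namely the finite sum of principal-part coefficients over the two cusps of $\Gamma_0(p)$. At $\infty$ one has $\ch_{V^G}(\tau)=q^{-1}+O(1)$, contributing $a(1)=1$. At the cusp $0$ one uses $f|W_p=\eps f$ together with the expansion of $\ch_{V^G}|S$ recorded in \eqref{eqchS} (which has principal part $\tfrac{1}{p}q^{-1}\cdot 1+\tfrac{1}{p}\sum_{i,j}\dim V(g^i)_{j/p}\,q^{j/p-1}$); matching exponents gives the contribution $\eps\bigl(\tfrac{a(p)}{p}+\tfrac{1}{p}\sum_{i,j}a(p-j)\dim V(g^i)_{j/p}\bigr)$. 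Setting the total equal to zero and clearing denominators is the whole proof. Your detour through the proof of \Cref{thmdim} --- the Weierstrass mock modular form, the constant $C_E$, the local factor at $2$ --- is entirely unnecessary here and would only obscure the argument.
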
 
Essentially, the formula in \Cref{thmdim2} also appears on \cite[p. 24]{EMS2}, but was proven using a different kind of pairing. We note that loosely speaking, one may interpret \Cref{thmdim} in view of \Cref{thmdim2} as the case where one replaces the newform $f$ by the weight $2$ Eisenstein series in $M_2(p)$.

\medskip

Our proof of \Cref{thmdim} relies on the following result which states that one can express any harmonic Maa\ss~form (in the given levels) essentially in terms of Weierstrass mock modular forms and Hecke operators (see \Cref{secWeierstrass,secOperators} for details). 
\begin{theorem}\label{thmspan}
Let $E$ denote the strong Weil curve of conductor 
$$N\in\{11,14,15,17,19,21\}.$$
Then any harmonic Maa{\ss} form of weight $0$ for $\Gamma_0(N)$ is is a linear combination of images of the completed Weierstrass mock modular form $\Zhat_E$ associated to the $\Gamma_0(N)$-optimal elliptic curve $E$ --- i.e. in the cases considered $E$ is a model for the modular curve $X_0(N)$ --- under the Hecke operators $T_m$ and 
Atkin-Lehner involutions
, or in other words:
$$H_0(N)\leq \span_\C\left\{\Zhat_E|W_Q|T_m|B_d \: :\: m\in\N_0,\ Q\mid N,\ d\mid N\right\},$$
where the operators $B_d$ are defined in \Cref{propAL}.
\end{theorem}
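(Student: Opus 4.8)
The plan is to prove Theorem~\ref{thmspan} by a dimension count combined with an explicit understanding of the principal parts and shadows of the forms on the right-hand side. First I would recall the general structure of the space $H_0(N)$ of harmonic Maa\ss\ forms of weight $0$ for $\Gamma_0(N)$: such a form $F$ is determined up to a weakly holomorphic modular form by its image under the $\xi_0$-operator, which is an element of the space $S_2(N)$ of cusp forms of weight $2$, and conversely every cusp form arises as such a shadow (this is the surjectivity of $\xi_0$, essentially the construction of harmonic lifts). Since in each of the listed levels $N\in\{11,14,15,17,19,21\}$ the curve $X_0(N)$ has genus $1$, the space $S_2(N)$ is one-dimensional, spanned by the newform $f_E$ attached to the optimal curve $E$. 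Thus modulo the weakly holomorphic forms $M_0^!(N)$, the space $H_0(N)$ is \emph{at most} one-dimensional, and it is exactly one-dimensional because $\Zhat_E$ itself is a genuine (non-weakly-holomorphic) harmonic Maa\ss\ form with $\xi_0(\Zhat_E)$ a nonzero multiple of $f_E$ (this is the defining property of the Weierstrass mock modular form recalled in \Cref{secWeierstrass}).

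The second step is therefore reduced to a purely ``holomorphic'' statement: I must show that every weakly holomorphic modular form of weight $0$ for $\Gamma_0(N)$ lies in the span of the $\Zhat_E|W_Q|T_m|B_d$. Here I would use that $M_0^!(N)$ is spanned by the modular functions with poles only at the cusps, and that the Hecke operators $T_m$ applied to $\Zhat_E$ (which itself has a pole of order one at the cusp $\infty$ coming from the $q$-expansion of the Weierstrass $\zeta$-function) produce, via the usual recursion for Hecke operators acting on $q$-expansions, forms with prescribed principal parts at $\infty$ of every order; the Atkin-Lehner involutions $W_Q$ and the $B_d$-operators then move these poles to the other cusps of $X_0(N)$. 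The key point is that the ``holomorphic projection'' or rather the difference $\Zhat_E|T_m - (\text{appropriate combination})$ kills the nonholomorphic part (since $\xi_0$ is Hecke-equivariant and $S_2(N)$ is one-dimensional, any $\C$-linear combination of the $\Zhat_E|W_Q|T_m|B_d$ whose shadow vanishes is automatically weakly holomorphic), so one only needs enough of these combinations to exhaust the finite-dimensional space of allowed principal parts at all cusps — a linear algebra statement that can be checked degree by degree, and in the composite levels $N=14,15,21$ one checks it cusp by cusp using the known cusp structure of $\Gamma_0(N)$.

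The main obstacle I anticipate is precisely this surjectivity onto principal parts in the composite levels: for $N=11,17,19$ there are only two cusps and the argument is essentially the classical fact that a genus-one modular curve's function field is generated by functions with poles at two points, but for $N=14,15,21$ one has four cusps and must verify that the combined action of $T_m$ and \emph{all} Atkin-Lehner/$B_d$ operators on $\Zhat_E$ reaches functions with arbitrary prescribed principal parts at each of the four cusps simultaneously — equivalently, that no nontrivial ``obstruction'' cusp form of weight $2$ pairs trivially with all these principal parts. I would handle this by invoking that $X_0(N)$ has genus $1$ so the space of holomorphic differentials is spanned by $f_E\,d\tau$ alone, and then a residue/pairing computation (the weight-$2$ analogue of the statement that a weakly holomorphic form of weight $0$ exists with given principal parts iff those principal parts are orthogonal to all holomorphic cusp forms under the constant-term pairing) reduces the claim to the non-vanishing of finitely many explicit Fourier coefficients of $f_E$ at the various cusps, which can be verified directly. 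A secondary, more bookkeeping-type difficulty is normalizing the $W_Q$, $T_m$, and $B_d$ operators consistently on non-holomorphic forms so that the Hecke recursions and Atkin-Lehner relations used in the classical setting carry over verbatim; this I would dispatch by citing the relevant compatibility already set up in \Cref{secOperators}.
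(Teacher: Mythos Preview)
Your overall framework is sound and matches the paper's in spirit: reduce via $\xi_0$ and \Cref{corBF} to realizing all possible principal parts. But there is a genuine gap in your claim that ``Hecke operators $T_m$ applied to $\Zhat_E$ \dots\ produce \dots\ forms with prescribed principal parts at $\infty$ of every order.'' This holds only for $\gcd(m,N)=1$. When $p\mid N$, the level-$N$ operator $T_p$ equals $U_p$, and since $\Zhat_E$ has holomorphic part $q^{-1}+O(1)$ at $\infty$, the form $\Zhat_E|U_p$ has \emph{no} pole at $\infty$ (for $N=p$ prime, \Cref{propAL}~(vi) even shows it equals $-p^{-1}\Zhat_E|W_p$ plus a constant, so its only pole is at the cusp $0$). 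Hence Hecke operators alone cannot reach pole orders sharing a factor with $N$, and this obstruction appears already in the prime levels $11,17,19$, not only in the composite cases you flag as the ``main obstacle.''

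The paper fills this gap with \Cref{lemHeckePoincare}, proved via Selberg's identity for Kloosterman sums: the Maa{\ss}--Poincar\'e series $P^{(N,0)}_\nu$ is written as $\sum_{d\mid\gcd(N,\nu)}(\nu/d)\,P^{(N/d,0)}_1|T^{(N/d)}_{\nu/d}|B_d$, which brings in \emph{lower-level} first Poincar\'e series. This is the real role of the operators $B_d$ in the theorem---to carry forms of level $N/d$ back up to level $N$---not, as you suggest, to transport poles among cusps. One must then express each $P^{(N/d,0)}_1$ in terms of $\Zhat_E=P^{(N,0)}_1$ via Atkin--Lehner and $U$-operators, again using \Cref{propAL}~(vi) iteratively. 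Your identified four-cusp difficulty is comparatively easy: Atkin--Lehner involutions act transitively on cusps for square-free $N$, so once all pole orders at $\infty$ are achieved the other cusps follow, modulo commuting the final $W_{Q'}$ leftward past $T_m$ and $B_d$ via the recursion \eqref{eqUWrec}. Finally, the pairing argument you propose certifies when \emph{some} form in $H_0(N)$ with a given principal part exists, but harmonic Maa{\ss} forms exist with \emph{any} principal part; the issue is whether your specific span realizes them, and that requires the constructive argument above.
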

\begin{remark}
It is essential in \Cref{thmspan} that the elliptic curves under consideration are indeed models for the modular curve $X_0(N)$, where $N$ is the respective conductor. In particular, the genus of $X_0(N)$ must be equal to $1$. There are six further levels $N$ with this property, namely $N\in\{20,24,27,32,36,49\}$, but our proof does not work in these cases for reasons we explain in \Cref{secOperators,secProofsspan}.
\end{remark}
\begin{remark}\label{remgeneralize}
As our proof will show, the statement of \Cref{thmspan} remains valid for all square-free levels $N$ if one replaces $\Zhat_E$ by the Maa\ss-Poincar\'e series for $\Gamma_0(N)$ which has exactly one simple pole at the cusp $\infty$. In particular, one may immediately generalize \Cref{thmdim} to arbitrary primes $p$ and \Cref{thmdimcomp} to arbitrary square-free numbers $N$ in this fashion. This way, one may obtain rationality results for the constant terms of these series in analogy to Corollary 1.2. It is however not known to the authors whether these constant terms are directly related to special values of interesting functions like the Weierstrass zeta function.
\end{remark}
The rest of this paper is organized as follows. In \Cref{secPrelim} we recall some background material on orbifold constructions of VOAs, Weierstrass mock modular forms, and operators on modular forms. \Cref{secProofs} contains the proofs of \Cref{thmdim}, \Cref{thmdim2}, and \Cref{thmspan} as well as a more general version of \Cref{thmdim} and its proof.

\section*{Acknowledgements}
This research was initiated when both authors attended a workshop on moonshine at the Erwin-Schr\"odinger-Institut in Vienna in September 2018. The authors thank the institute for their hospitality. They also thank Scott Carnahan, John Duncan, Soon-Yi Kang, Sven M\"oller, Ken Ono, Nils Scheithauer, and David Zureick-Brown for helpful discussions. Furthermore, the authors are grateful to John Duncan, Sven M\"oller, and Nils Scheithauer, and the anonymous referee for their comments on earlier versions of this manuscript which helped to improve the exposition. 

\section{Preliminaries}\label{secPrelim}
\subsection{Orbifold VOAs}\label{secVOA}
The construction of the moonshine module $V^{\natural}$ \cite{FLM84, FLM83, FLM} has greatly motivated the study of vertex operator algebras (VOAs). The problem of orbifolding a conformal field theory with respect to an automorphism rose to prominence contemporaneously in physics \cite{DHVW2,DHVW1}. The construction of $V^{\natural}$ was subsequently interpreted as the first example of an orbifold model that is not equivalent to a lattice vertex operator algebra \cite{FLM}. For $G$ a group of automorphisms of $V$, the study of the fixed point sub-VOA $V^G$ and its representation theory is referred to as \emph{orbifold theory}.  We refer the reader to \cite{DRX15, EMS15, Mollerthesis} for details on cyclic orbifold theory for holomorphic VOAs and give a short summary below.

We first recall some basic definitions and properties of VOAs and their twisted modules. We refer the reader to \cite{FBZ} \cite{FLM} and \cite{LL12} for more details.

A VOA $V$ is a complex vector space equipped with two distinguished vectors $\bf{1}$ and $\omega$ called the vacuum element and the conformal vector, respectively. Further, for each vector $v\in V$ there is a map $Y(\cdot, z): V \to \text{End}(V)\llbracket z, z^{-1}\rrbracket$ assigning a formal power series $Y(v, z):=\sum_{n\in\mathbb{Z}} v(n) z^{-n-1}$ (which we call a vertex operator) to $v$. The tuple $(V,{\bf{1}},\omega, Y)$ must satisfy several axioms (see Section 8.10 of \cite{FLM}). In particular, the coefficients of the vertex operator attached to the conformal vector generate a copy of the Virasoro algebra of central charge $c$. In other words, if $Y(\omega, z):=\sum_{n\in\mathbb{Z}} L(n) z^{-n-2}$ then $[L(m),L(n)]=(m-n)L(m+n)+ \frac{1}{12}(m^3-m)\delta_{m+n,0}c ,$ and we refer to $c$ as the \textit{central charge} of $V$.
VOAs admit a $\mathbb{Z}$-grading (bounded from below) so that $V=\bigoplus_{n\in \mathbb{Z}} V_n$. This grading on $V$ comes from the eigenspaces of the $L(0)$ operator, by which we mean that $V_n:=\{ v\in V \mid L(0)v=nv \}$. The smallest $n$ for which $V_n\neq 0$ is called the \textit{conformal weight} of $V$ and is denoted $\rho(V)$. We say $V$ is of \textit{CFT-type} if $\rho(V)=0$ and $V_0=\mathbb{C}{\bf{1}}$.

A $V$-module is a vector space $M$ equipped with an operation $Y_M : V \to \text{End} M \llbracket z^{\pm1}\rrbracket$ which assigns to each $v\in V$ a formal power series $Y_M(v, z):=\sum_{n\in\mathbb{Z}} v^{M}(n) z^{-n-1}$ subject to several axioms (see section 5.1 of \cite{FBZ}). A module $M$ whose only submodules are $0$ and itself is called \textit{simple} or \textit{irreducible}. A VOA $V$ for which every admissible $V$-module decomposes into a direct sum of (ordinary) irreducibles is called \textit{rational} and we say that $V$ is \textit{holomorphic} if it is rational and has a unique irreducible module (which must necessarily be $V$ itself). Given a $V$-module $W$ with a grading, it is possible to define a $V$-module $W'$, that is (as a vector space) the graded dual space of $W$ (for a definition of the dual module we refer to Section $5.2$ of \cite{FHL93}). We say a vertex algebra $V$ is \textit{self-dual} if the module $V$ is isomorphic to its dual $V'$ (as a $V$-module). In \cite{Zhu}, Zhu introduced a finiteness condition on a VOA $V$, we say $V$ is \emph{$C_2$-cofinite} if $C_2(V ) := \text{span}\{v(2)w \mid v, w \in V \}$ has finite codimension in $V$. 
A VOA is called \textit{strongly rational} if it is rational, $C_2$-cofinite,  self-dual, and of CFT-type.

For $G$ a finite group of automorphisms of $V$ and $g\in G$, one can define a $g$-twisted module $V(g)$ of $V$ (see Section $3$ of \cite{DLM}). By \cite{DLM}, for $V$ a $C_2$-cofinite holomorphic VOA and $G=\langle g \rangle$ a cyclic group of automorphisms of $V$, $V$ posesses a unique simple $g^i$-twisted $V$-module, which we call $V(g^i)$, for each $i\in \mathbb{Z}/N\mathbb{Z}$ for $N$ the order of $g$. 
By Proposition $4.2.3$ of \cite{Mollerthesis} (see also \cite{DLM}) there is a representation 
$$\phi_i\colon G\to \text{Aut}_{\mathbb{C}}(V(g^i))$$
of $G$ on the vector space $V(g^i)$ such that $\phi_i(g)Y_{V(g^i)}(v,z)\phi_i^{-1}(g)= Y_{V(g^i)}$
for all $i\in \mathbb{Z}/N\mathbb{Z}$ and $v\in V$. This representation is unique up to an $N$-th root of unity. The eigenspace of $\phi_i(g)$ in $V(g^i)$ corresponding to the eigenvalue $e^{(2\pi i)j/N}$ is denoted by $W^{(i,j)}$ and as $\mathbb{C}[G]$-modules, we have that $V(g^i)=\bigoplus_jW^{(i,j)}$.

The fixed point sub-VOA $V^G=W^{(0,0)}$ of $V$ is defined to be the vectors in $V$ which are fixed pointwise under the action of $G$. 


 The main theorem of orbifold theory 
 \cite{CM16, Miy15, DM97} is that if $V$ is strongly rational and $G$ is a finite, solvable group of automorphisms of $V$, then the fixed-point VOA $V^G$ is strongly rational as well. 
 
For all $i,j\in\Z/N\Z$, the $W^{(i,j)}$ are irreducible $V^G$-modules \cite{Yam} and further, by the classification of irreducible modules in \cite{Mollerthesis}, there are exactly $n^2$ irreducible $V^G$-modules (namely, the $W^{(i,j)}$). We make the additional assumption that $g$ has type $N\{0\}$ (a certain condition on the conformal weights of the $g$-twisted modules, see Definition $4.7.4$ of \cite{Mollerthesis}), which gives us that the conformal weights obey $\rho (V (g)) \in (1/N)\mathbb{Z}$. This enables us to choose representations $\phi_i$ such that the conformal weights of  $W^{(i,j)}$ obey $\rho (W^{(i,j)}) \in (ij/N)\mathbb{Z}$.

For $V$ with central charge divisible by $24$, the characters of the irreducible $V^G$-modules $\text{ch}_{W^{(i,j)}}(\tau)= \text{tr}_{W^{(i,j)}} q^{L(0)-c/24}$ are holomorphic on the upper half-plane and modular of weight $0$ for $\Gamma_0(N)$ (\cite[Theorem 5.1]{EMS15}).

We also assume that $V^G$ satisfies the positivity assumption, which states that for a simple VOA $V$, the conformal weights of any irreducible $V$-module $W\neq V$ are positive and the conformal weight of $V$ is zero. 

If $V^G$ satisfies the positivity assumption, the orbifold VOA of $V$ with respect to $g$ is defined to be $$V^{\text{orb}(g)}:=\bigoplus_{i\in \mathbb{Z}/N\mathbb{Z}} W^{(i,0)}.$$
Note that if $V$ is strongly rational, then $V^{\text{orb}(g)}$ has the structure of a holomorphic, strongly rational VOA of the same central charge as $V$.
 
We refer to work by Zhu \cite{Zhu}, Dong--Li--Mason \cite{DLM}, and Dong--Lin--Ng \cite{DLNg} for details on the modular invariance of irreducible (twisted) modules for $C_2$-cofinite VOAs. We recall the following results from \cite{EMS15,MSV}.
\begin{proposition}\label{proptrans}
The characters $\ch_{W^{(i,j)}}(\tau)$ form a vector-valued modular form of weight $0$ for the Weil representation associated to the finite quadratic module $(\mathbb{Z}/N\mathbb{Z})\times (\mathbb{Z}/N\mathbb{Z})$ endowed with the quadratic form $q((i,j))=ij/N+\mathbb{Z}$. In particular, their transformation properties under the standard generators $S=\left(\begin{smallmatrix} 0 & -1 \\ 1 & 0\end{smallmatrix}\right)$ and $T=\left(\begin{smallmatrix} 1 & 1 \\ 0 & 1\end{smallmatrix}\right)$ of $\SLZ$ are given by 
$$\ch_{W^{(i,j)}}(S.\tau)=\frac{1}{n} \sum\limits_{k,\ell \in \mathbb{Z}/N\mathbb{Z}} e^{(2\pi i) (i\ell+jk)/N}\emph{ch}_{W^{(k,\ell)}}(\tau), $$
and 
$$\ch_{W^{(i,j)}}(T.\tau)= e^{(2\pi i) ij/N}\emph{ch}_{W^{(i,j)}}(\tau).$$
\end{proposition}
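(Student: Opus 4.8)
The plan is to deduce \Cref{proptrans} from Zhu's modular invariance theorem for graded trace functions and its extension to twisted modules, following \cite{EMS15,MSV}, rather than re-deriving $\SLZ$-invariance from scratch. For $i,j\in\Z/N\Z$ introduce the two-variable trace function
$$Z\binom{g^i}{g^j}(\tau):=\tr_{V(g^i)}\phi_i(g^j)\,q^{L(0)-c/24},$$
evaluated on the unique simple $g^i$-twisted module $V(g^i)$. Since $V$ is holomorphic and $C_2$-cofinite, the work of Zhu \cite{Zhu} (untwisted case) together with Dong--Li--Mason \cite{DLM} and Dong--Lin--Ng \cite{DLNg} shows that each $Z\binom{g^i}{g^j}$ converges to a holomorphic function on $\HH$ and that the $\C$-span of these $N^2$ functions is preserved by the weight-$0$ action of $\SLZ$. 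Decomposing $V(g^i)=\bigoplus_{\ell\in\Z/N\Z}W^{(i,\ell)}$ into eigenspaces of $\phi_i(g)$, on which $\phi_i(g)^j=\phi_i(g^j)$ acts by $e^{(2\pi i)j\ell/N}$, yields the finite Fourier expansion
$$Z\binom{g^i}{g^j}(\tau)=\sum_{\ell\in\Z/N\Z}e^{(2\pi i)j\ell/N}\,\ch_{W^{(i,\ell)}}(\tau),$$
which inverts to $\ch_{W^{(i,j)}}(\tau)=\tfrac{1}{N}\sum_{m\in\Z/N\Z}e^{-(2\pi i)jm/N}\,Z\binom{g^i}{g^m}(\tau)$.

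The key input is the transformation law of the two-variable functions under $\SLZ$: for $\gamma=\abcd{a}{b}{c}{d}\in\SLZ$ one has, in weight $0$ and with trivial automorphy factor,
$$Z\binom{g^i}{g^j}(\gamma\tau)=Z\binom{g^{ai+cj}}{g^{bi+dj}}(\tau),$$
so that $S=\abcd{0}{-1}{1}{0}$ acts on the index pair by $(i,j)\mapsto(j,-i)$ and $T=\abcd{1}{1}{0}{1}$ by $(i,j)\mapsto(i,i+j)$. Here the hypotheses that $c=24$ and that $g$ is of type $N\{0\}$ are used in an essential way: they force $L(0)-c/24$ to act on $W^{(i,j)}$ with eigenvalues in $\tfrac{ij}{N}+\Z$, which is what makes the root-of-unity multiplier in the twisted modular-invariance theorem trivial and the displayed law clean (this is the content of \cite[Theorem 5.1]{EMS15} quoted above).

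Substituting this law into the Fourier inversion formula and reindexing gives the assertion. For $T$, the substitution $m\mapsto m-i$ yields
$$\ch_{W^{(i,j)}}(T\tau)=\tfrac{1}{N}\sum_{m\in\Z/N\Z}e^{-(2\pi i)jm/N}Z\binom{g^i}{g^{i+m}}(\tau)=e^{(2\pi i)ij/N}\,\ch_{W^{(i,j)}}(\tau),$$
which is the claimed $T$-transformation. For $S$, expanding $Z\binom{g^m}{g^{-i}}$ back in terms of characters via the Fourier expansion above and renaming the summation index produces a double sum of the shape $\tfrac{1}{N}\sum_{k,\ell}e^{(2\pi i)(i\ell+jk)/N}\ch_{W^{(k,\ell)}}(\tau)$, with the precise sign and normalization dictated by the conventions in \cite{DLM,DLNg}. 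To finish, one identifies this with the Weil representation attached to $D=(\Z/N\Z)^2$ with $q((i,j))=ij/N+\Z$: the associated bilinear form is $\big((i,j),(k,\ell)\big)=q\big((i+k,j+\ell)\big)-q\big((i,j)\big)-q\big((k,\ell)\big)=\tfrac{i\ell+jk}{N}+\Z$, one has $|D|=N^2$, and the Gauss sum of $(D,q)$ equals $N$, so $(D,q)$ has signature $0$ and the normalizing root of unity in $\rho(S)$ is $1$; the standard formulas $\rho(T)e_\mu=e^{(2\pi i)q(\mu)}e_\mu$ and $\rho(S)e_\mu=|D|^{-1/2}\sum_\nu e^{-(2\pi i)(\mu,\nu)}e_\nu$ then match the transformations just computed.

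The main obstacle is not any single computation but the careful bookkeeping of normalizations: pinning down the root-of-unity multiplier in the twisted trace-function transformation law of \cite{DLM,DLNg}, verifying that the hypotheses recorded in \Cref{secVOA} (central charge $24$, type $N\{0\}$, the chosen representations $\phi_i$) really do kill it, and matching the resulting signs against the chosen quadratic form on $(\Z/N\Z)^2$ and the chosen convention for the Weil representation. The genuinely hard analytic input --- convergence of the trace functions and $\SLZ$-closure of their span --- is imported wholesale from \cite{Zhu,DLM,DLNg}.
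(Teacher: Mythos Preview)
The paper does not actually prove \Cref{proptrans}; it is stated there as a result \emph{recalled} from \cite{EMS15,MSV}, with the analytic background attributed to \cite{Zhu,DLM,DLNg}. Your sketch is precisely the standard argument underlying those references: introduce the twisted trace functions $Z\binom{g^i}{g^j}$, invoke the Dong--Li--Mason/Dong--Lin--Ng modular invariance (with the multiplier killed by the central-charge and type-$N\{0\}$ hypotheses), and then pass to the characters via discrete Fourier inversion over $\Z/N\Z$. So your approach and the paper's (implicit, outsourced) approach coincide.

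One remark on bookkeeping: your $S$-computation, carried out literally, yields $\tfrac{1}{N}\sum_{k,\ell}e^{-(2\pi i)(i\ell+jk)/N}\ch_{W^{(k,\ell)}}$, whereas the proposition as stated in the paper has a \emph{positive} sign in the exponential. You flag this yourself (``precise sign and normalization dictated by the conventions''), and indeed the paper itself notes in the footnote to \Cref{proptrans2} that there is a sign discrepancy with \cite{MSV}. Since $W^{(k,\ell)}\cong W^{(-k,-\ell)}$ under suitable identifications, or equivalently since one may swap between the Weil representation and its dual, the two formulas are consistent up to convention; but if you want to match the displayed formula exactly you should either take the dual Weil representation or the inverse action $(g,h)\mapsto(g,h)\gamma^{-1}$ on the pair. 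This is exactly the ``careful bookkeeping of normalizations'' you identify as the main obstacle, and it is worth pinning down rather than leaving implicit.
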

From equation (7) of \cite{MSV}, we have the following transformation property.
\begin{proposition}\label{proptrans2}
The character $\emph{ch}_{V^G}(\tau)$ is a modular function for $\Gamma_0(N)$ and moreover, for a matrix $\gamma= \left(\begin{smallmatrix} a & b \\ c & d \end{smallmatrix} \right) \in \SLZ$, sending $\infty$ to the cusp $\fraka=a/c$ with $c\mid N$ and $\gcd(a,c)=1$, we have\footnote{Note that in \cite[Equation (7)]{MSV} there is an erroneous minus sign in the exponential.}
$$ \ch_{W^{(0,0)}}(\gamma.\tau)=\frac{c}{N} \sum\limits_{i, j\in \mathbb{Z}/(N/c)\mathbb{Z}} e^{(2\pi i) dcij/N}\emph{ch}_{W^{(ci,cj)}}(\tau).$$
 \end{proposition}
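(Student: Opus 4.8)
The plan is to derive everything from \Cref{proptrans}: since $S$ and $T$ generate $\SLZ$, that proposition already determines the transformation of the vector $(\ch_{W^{(i,j)}})_{i,j}$ under an arbitrary $\gamma\in\SLZ$, and the only real work is to reorganize this in a basis adapted to the fixed-point VOA. For $i,j\in\Z/N\Z$ I would put
$$Z\!\begin{bmatrix} i\\ j\end{bmatrix}(\tau):=\tr_{V(g^i)}\!\left(\phi_i(g^j)\,q^{L(0)-c/24}\right)=\sum_{\ell\in\Z/N\Z}e^{2\pi ij\ell/N}\ch_{W^{(i,\ell)}}(\tau),$$
the second equality being the decomposition $V(g^i)=\bigoplus_\ell W^{(i,\ell)}$ into $\phi_i(g)$-eigenspaces recalled in \Cref{secVOA}; thus $Z\!\begin{bmatrix} i\\ \cdot\end{bmatrix}$ is the finite Fourier transform of $\ch_{W^{(i,\cdot)}}$. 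Since $V^G=W^{(0,0)}$ is the image of the averaging projector $\frac1N\sum_{h\in G}h$ on $V=V(g^0)$, we have $\ch_{V^G}=\frac1N\sum_{k=0}^{N-1}Z\!\begin{bmatrix} 0\\ k\end{bmatrix}$.

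Next I would transfer \Cref{proptrans} to this basis. A direct computation with the $S$- and $T$-formulas there, using the identity $\ch_{W^{(i,j)}}=\ch_{W^{(-i,-j)}}$ (immediate from applying the $S$-transformation twice, since $S^2$ acts trivially on $\HH$), gives $Z\!\begin{bmatrix} i\\ j\end{bmatrix}(T\tau)=Z\!\begin{bmatrix} i\\ i+j\end{bmatrix}(\tau)$ and $Z\!\begin{bmatrix} i\\ j\end{bmatrix}(S\tau)=Z\!\begin{bmatrix} -j\\ i\end{bmatrix}(\tau)$, with no multiplier since the transformation in \Cref{proptrans} carries none. The induced action on the index is by $\gamma\mapsto\gamma^{\mathsf T}$, which is an anti-homomorphism compatible with the order of composition, so this extends, for any $\gamma=\abcd{a}{b}{c}{d}\in\SLZ$, to
$$Z\!\begin{bmatrix} i\\ j\end{bmatrix}(\gamma\tau)=Z\!\begin{bmatrix} ai+cj\\ bi+dj\end{bmatrix}(\tau),$$
with indices read modulo $N$.

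The remaining computation is then short. For $\gamma=\abcd{a}{b}{c}{d}$ with $c\mid N$ and $\gcd(a,c)=1$ (whence $\gcd(c,d)=1$ from $ad-bc=1$), the two steps above give
$$\ch_{V^G}(\gamma\tau)=\frac1N\sum_{k=0}^{N-1}Z\!\begin{bmatrix} ck\\ dk\end{bmatrix}(\tau)=\frac1N\sum_{k=0}^{N-1}\sum_{\ell\in\Z/N\Z}e^{2\pi idk\ell/N}\ch_{W^{(ck,\ell)}}(\tau).$$
Writing $k=k_0+(N/c)t$ with $k_0\in\Z/(N/c)\Z$, $t\in\Z/c\Z$, the sum over $t$ is $\sum_t e^{2\pi idt\ell/c}$, which equals $c$ when $c\mid d\ell$ and $0$ otherwise; since $\gcd(c,d)=1$ this forces $\ell=c\ell_0$, and collecting terms yields exactly $\frac cN\sum_{i,j\in\Z/(N/c)\Z}e^{2\pi idcij/N}\ch_{W^{(ci,cj)}}(\tau)$. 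Running the identical computation for $\gamma\in\Gamma_0(N)$ (so $N\mid c$, hence $ck\equiv0$ and $k\mapsto dk$ permutes $\Z/N\Z$) collapses the right-hand side to $\ch_{V^G}(\tau)$, i.e. $\Gamma_0(N)$-invariance; together with holomorphy on $\HH$ (recalled in \Cref{secVOA}) and the fact that each $\ch_{V(g^i)}$, hence each $Z\!\begin{bmatrix} i\\ j\end{bmatrix}$, has a $q$-expansion with only finitely many negative terms (conformal weights being fixed, gradings bounded below), this shows $\ch_{V^G}$ is a modular function on $X_0(N)$.

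The step I expect to be the main obstacle is the transfer in the second paragraph: \Cref{proptrans} is stated in the $\ch_{W^{(i,j)}}$-basis, and extracting the clean permutation action on the $Z$-basis requires careful tracking of the finite Fourier transform, the identification $\ch_{W^{(i,j)}}=\ch_{W^{(-i,-j)}}$, and the anti-homomorphism property of $\gamma\mapsto\gamma^{\mathsf T}$, as well as checking that no root-of-unity multiplier creeps in (it does not, because \Cref{proptrans} is multiplier-free, the finite quadratic module $((\Z/N\Z)^2,\,ij/N)$ having signature $\equiv0\pmod 8$). Everything after that is bookkeeping with finite geometric sums. This reproduces equation~(7) of \cite{MSV}, with the sign of the exponent as corrected in the footnote above.
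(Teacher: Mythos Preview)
Your derivation is correct. The paper itself does not prove this proposition; it simply quotes it as equation~(7) of \cite{MSV} (correcting a sign there), so there is no in-paper argument to compare against. What you have written is essentially the standard computation underlying that equation: pass from the $\ch_{W^{(i,j)}}$-basis to the twisted-twined characters $Z\!\left[\begin{smallmatrix} i\\ j\end{smallmatrix}\right]$ by finite Fourier transform, observe that in this basis the Weil-representation formulas of \Cref{proptrans} become the permutation action $Z\!\left[\begin{smallmatrix} i\\ j\end{smallmatrix}\right](\gamma\tau)=Z\!\left[\begin{smallmatrix} ai+cj\\ bi+dj\end{smallmatrix}\right](\tau)$ (up to the harmless $\pm$ ambiguity handled by $\ch_{W^{(i,j)}}=\ch_{W^{(-i,-j)}}$), and then evaluate the geometric sum. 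One small remark: your check that $M_S=S^{\mathsf T}$ only holds modulo the central $\pm I$, since the direct computation from \Cref{proptrans} gives $(i,j)\mapsto(-j,i)$ rather than $(j,-i)$; you correctly absorb this into the symmetry $Z\!\left[\begin{smallmatrix} i\\ j\end{smallmatrix}\right]=Z\!\left[\begin{smallmatrix} -i\\ -j\end{smallmatrix}\right]$, but it is worth stating that this is where that identity is actually used rather than leaving it as an aside.
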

For a cusp $\fraka$ of $\Gamma_0(N)$, van Ekeren, M\"oller, and Scheithauer \cite{MSV} define the function
\begin{gather}\label{eqFfraka} 
F_\fraka(\tau):=\sum\limits_{\substack{\gamma\in \Gamma_0(N)\setminus \SLZ \\ \gamma.\infty=\fraka}} \text{ch}_{W^{(0,0)}}(\gamma.\tau). 
\end{gather}
In \cite[Proposition 3.6]{MSV}, they give the following general identity for this function $F_\fraka$ which is essential in establishing the dimension formulas.
\begin{proposition}\label{propEMS}
The function $F_\fraka$ defined in \eqref{eqFfraka} satisfies the identity
$$\sum_{\fraka} F_\fraka(\tau)=\sum\limits_{d\mid N} \dfrac{\varphi(\gcd(d,N/d))}{\gcd(d,N/d)}\ch_{V^{\orb(g^d)}}(\tau),$$
where the sum over $\fraka$ runs over a set of representatives of cusps of $\Gamma_0(N)$ and $\varphi(n):=\#(\Z/n\Z)^*$ denotes Euler's totient function.
\end{proposition}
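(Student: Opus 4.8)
The plan is to evaluate the left-hand side directly as a single sum over the full coset space. Since $\ch_{W^{(0,0)}}=\ch_{V^G}$ is $\Gamma_0(N)$-invariant by \Cref{proptrans2}, the summand $\ch_{W^{(0,0)}}(\gamma.\tau)$ in \eqref{eqFfraka} depends only on the right coset $\Gamma_0(N)\gamma$, so $F_\fraka$ is well defined, and as $\fraka$ runs over a set of cusp representatives every coset of $\Gamma_0(N)\backslash\SLZ$ is hit exactly once. Hence
$$\sum_\fraka F_\fraka(\tau)=\sum_{\gamma\in\Gamma_0(N)\backslash\SLZ}\ch_{W^{(0,0)}}(\gamma.\tau).$$
First I would choose, for each cusp $\fraka$, a representative $\gamma_\fraka=\abcd{a}{b}{c}{d}$ with $c\mid N$ and $\gcd(a,c)=1$, so that the remaining cosets mapping to $\fraka$ are $\Gamma_0(N)\gamma_\fraka T^k$ for $k=0,\dots,h_\fraka-1$, where $h_\fraka=N/(c\gcd(c,N/c))$ is the width.

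Next I would insert the explicit transformation formula of \Cref{proptrans2} for $\ch_{W^{(0,0)}}(\gamma_\fraka T^k.\tau)$. Replacing $d$ by $ck+d$ produces a factor $e^{2\pi i c^2 ijk/N}$, and summing the resulting geometric series over $k=0,\dots,h_\fraka-1$ yields a clean orthogonality: since the common ratio is automatically an $h_\fraka$-th root of unity (because $\gcd(c,N/c)\mid c$), the inner sum vanishes unless $N\mid c^2 ij$, in which case it contributes $h_\fraka$. Using $ch_\fraka/N=1/\gcd(c,N/c)$, this collapses $F_\fraka$ to
$$F_\fraka(\tau)=\frac{1}{\gcd(c,N/c)}\sum_{\substack{i,j\in\Z/(N/c)\Z\\ N\mid c^2 ij}}e^{2\pi i dcij/N}\,\ch_{W^{(ci,cj)}}(\tau).$$

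I would then sum over all cusps, grouping them by denominator $c\mid N$; there are $\varphi(\gcd(c,N/c))$ cusps with denominator $c$, indexed by $a\in(\Z/\gcd(c,N/c)\Z)^\times$, and for each such cusp $d$ is pinned down modulo the relevant divisor by $ad\equiv 1$. The key computation is to sum the residual phase $e^{2\pi i dcij/N}$ over these $\varphi(\gcd(c,N/c))$ cusps; this is a Ramanujan-type character sum whose evaluation supplies exactly the arithmetic weights appearing on the right-hand side. In parallel I would expand each orbifold character using the identification, from the setup in \Cref{secVOA}, of $V^{\orb(g^d)}$ (for $d\mid N$) as the sum of the $g^d$-fixed subspaces of the $g^{di}$-twisted modules, namely $\ch_{V^{\orb(g^d)}}=\sum_{i=0}^{N/d-1}\sum_{k=0}^{d-1}\ch_{W^{(di,\,kN/d)}}$, so that the coefficient of a fixed $\ch_{W^{(I,J)}}$ on the right-hand side becomes $\sum_{d\mid N,\ d\mid I,\ (N/d)\mid J}\varphi(\gcd(d,N/d))/\gcd(d,N/d)$.

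Finally I would match the two sides by comparing the coefficient of each $\ch_{W^{(I,J)}}$. Both coefficients are multiplicative in $N$, so it suffices to verify the identity one prime power $p^e\parallel N$ at a time, where it reduces to an elementary count. In the squarefree case, where $\gcd(c,N/c)=1$, all residual phases are trivial and both coefficients reduce to $\#\{c\mid N:c\mid I,\ c\mid J,\ (N/c)\mid (I/c)(J/c)\}=\#\{d\mid N:d\mid I,\ (N/d)\mid J\}$, an equality one checks locally at each prime. I expect the main obstacle to be precisely the non-squarefree bookkeeping: carrying out the residual character sum over the cusps of a fixed denominator and showing that it reproduces the weight $\varphi(\gcd(d,N/d))/\gcd(d,N/d)$, together with checking that the normalization of the representations $\phi_i$ used to define the twisted-module decomposition is consistent across the different orbifolds $V^{\orb(g^d)}$.
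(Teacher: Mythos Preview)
The paper does not supply its own proof of this proposition: it is quoted verbatim as \cite[Proposition~3.6]{MSV} and used as a black box in the later dimension computations. So there is no in-paper argument to compare against.

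That said, your outline is the natural way to establish the identity and is essentially what one finds in \cite{MSV}. The reduction $\sum_\fraka F_\fraka=\sum_{\gamma\in\Gamma_0(N)\backslash\SLZ}\ch_{W^{(0,0)}}(\gamma.\tau)$ is correct, the geometric-series orthogonality over the width is the right mechanism, and your decomposition $\ch_{V^{\orb(g^d)}}=\sum_{i=0}^{N/d-1}\sum_{k=0}^{d-1}\ch_{W^{(di,\,kN/d)}}$ is exactly the identification one obtains by noting that the $g^d$-fixed subspace of $V(g^{di})=\bigoplus_j W^{(di,j)}$ picks out those $j$ with $N\mid dj$. For the purposes of this paper only square-free $N$ are ever used (cf.\ \Cref{thmdim}, \Cref{thmdimcomp}), and in that regime $\gcd(c,N/c)=1$, all residual phases are trivial, and your coefficient match reduces to the elementary local equality you state; so the part you flag as the ``main obstacle'' is in fact irrelevant here. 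If you wanted to push through the general non-squarefree case you would indeed have to evaluate the Ramanujan-type sum $\sum_{d\in(\Z/\gcd(c,N/c)\Z)^\times}e^{2\pi i dcij/N}$ and check consistency of the $\phi_i$-normalisations across the various $g^d$; this is done carefully in \cite{MSV}, but is not needed for anything in the present paper.
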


\subsection{Mock modular forms and harmonic Maa\ss~forms}
In this section, we briefly recall some basic definitions and facts about mock modular forms and harmonic Maa\ss~forms. For more detailed information as well as references to original works, the reader may consult for example the book \cite{BOOK}.

A \emph{harmonic Maa\ss~form} of weight $k\in\Z$ for the group $\Gamma_0(N)$ is a smooth function $f:\HH\to\C$ satisfying the following three conditions:
\begin{enumerate}
\item $f$ is invariant under the weight $k$ slash operator,
$$f|_k\abcd a b c d :=(c\tau+d)^{-k} f\left(\frac{a\tau+b}{c\tau+d}\right)=f(\tau)\quad \text{for all $\tau\in\HH$ and $\abcd a b c d \in\Gamma_0(N)$.}$$
\item $f$ is annihilated by the weight $k$ hyperbolic Laplacian ($\tau=x+iy$),
$$\Delta_k f := \left[-y^2\left(\frac{\partial^2}{\partial x^2}+\frac{\partial^2}{\partial y^2}\right)+iky\left(\frac{\partial}{\partial x}+i\frac{\partial}{\partial y}\right)\right]f\equiv 0.$$
\item $f$ has at most linear exponential growth at the cusps, i.e. there exists a polynomial $H\in\C[X]$ such that $f-H(q^{-1})$ has exponential decay towards infinity and analogous conditions hold at all other cusps.
\end{enumerate}
The space of these forms is denoted by $H_k(N)$. The subspaces $S_k(N)\subseteq M_k(N)\subseteq M_k^!(N)\subseteq H_k(N)$ denote the spaces of cusp forms, modular forms, and weakly holomorphic modular forms. It is sometimes convenient to relax the conditions to allow poles in the upper half-plane, in which case we speak of \emph{polar} harmonic Maa\ss~forms.

These functions naturally split into a holomorphic and a non-holomorphic part \cite[Lemma 4.3]{BOOK}, $f=f^++f^-$. The holomorphic part of a harmonic Maa\ss~form is called a \emph{mock modular form}. If $f$ is a polar harmonic Maa\ss~form we call $f^+$ a polar mock modular form.  Vice versa, given a mock modular form $f$, we refer to the harmonic Maa\ss~form $\widehat f$ having it as its holomorphic part as the \emph{(modular) completion} of $f$. 

The non-holomorphic part of a harmonic Maa\ss~form is related to a cusp form called the \emph{shadow} of the corresponding mock modular form \cite[Theorem 5.10]{BOOK}.
\begin{proposition}\label{propxi}
The operator $\xi_k=2iy^k\overline{\frac{\partial}{\partial\overline\tau}}$ defines a surjective $\C$-antilinear map
$$H_k(N)\twoheadrightarrow S_{2-k}(N)$$
with kernel $M_k^!(N)$.
\end{proposition}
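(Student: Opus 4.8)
The plan is to verify, in turn, that $\xi_k$ is $\C$-antilinear, that it maps $H_k(N)$ into $S_{2-k}(N)$, that its kernel is $M_k^!(N)$, and --- the only point that is not a formal manipulation --- that it is surjective. Antilinearity is immediate from the complex conjugation in $\xi_k=2iy^k\,\overline{\partial/\partial\overline\tau}$. That $\xi_kf$ lands in $S_{2-k}(N)$ for $f\in H_k(N)$ comes down to three checks. First, weight-$(2-k)$ modularity: a direct chain-rule computation, using $\Im(\gamma\tau)=y/|c\tau+d|^2$, yields the intertwining relation $\xi_k(f|_k\gamma)=(\xi_kf)|_{2-k}\gamma$ for all $\gamma\in\SL_2(\R)$, so invariance of $f$ under $\Gamma_0(N)$ transfers to weight-$(2-k)$ invariance of $\xi_kf$. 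Second, holomorphy on $\HH$: the weight-$k$ Laplacian factors as $\Delta_k=-\xi_{2-k}\circ\xi_k$ (a short computation reconciling the two standard forms of $\Delta_k$), so $\Delta_kf\equiv 0$ forces $\xi_{2-k}(\xi_kf)\equiv 0$, i.e.\ $\partial(\xi_kf)/\partial\overline\tau\equiv 0$, and $\xi_kf$ is holomorphic. Third, cuspidality: since $\xi_k$ kills holomorphic functions, $\xi_kf=\xi_kf^-$, and condition~(3) in the definition of $H_k(N)$ --- exponential decay of $f$ minus its principal part --- forces the non-holomorphic part at $\infty$ to have the shape $f^-=\sum_{n<0}c^-(n)\,\Gamma(1-k,4\pi|n|y)\,q^n$ with no $y^{1-k}$-term present, so applying $\xi_k$ term by term produces a Fourier expansion $\sum_{m\ge 1}b(m)q^m$ with vanishing constant term; by the modularity just shown, the same holds at every cusp. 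Thus $\xi_kf$ is a holomorphic, weight-$(2-k)$ form vanishing at all cusps, i.e.\ a cusp form.

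The kernel is quickly identified: $\xi_kf=2iy^k\,\overline{\partial f/\partial\overline\tau}$ vanishes identically iff $\partial f/\partial\overline\tau\equiv 0$, i.e.\ iff $f$ is holomorphic on $\HH$; a holomorphic $f$ satisfying conditions~(1) and~(3) is exactly a weakly holomorphic modular form, since for a holomorphic function linear exponential growth at a cusp is the same as a pole of finite order there. Hence $\ker\xi_k=M_k^!(N)$, the inclusion $M_k^!(N)\subseteq\ker\xi_k$ being obvious.

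Surjectivity is the substantive step, and the only real obstacle. I would exhibit explicit preimages through Maass--Poincar\'e series: for $m\in\N$ set $P_{k,m}(\tau)=\sum_{\gamma\in\Gamma_\infty\backslash\Gamma_0(N)}\bigl(\mathcal M_{1-k/2}(4\pi m y)\,e^{-2\pi imx}\bigr)\big|_k\gamma$, where $\mathcal M_\nu$ is the usual Whittaker-type normalization making the seed a $\Delta_k$-harmonic function with principal part $q^{-m}$ at $\infty$. For $k<0$ this sum converges absolutely and locally uniformly, so $P_{k,m}\in H_k(N)$; computing $\xi_k$ on the seed and interchanging $\xi_k$ with the summation (justified by the intertwining relation above) shows that $\xi_kP_{k,m}$ equals, up to an explicit nonzero constant, the classical weight-$(2-k)$ Poincar\'e series $g_{2-k,m}\in S_{2-k}(N)$. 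Because $\{g_{2-k,m}:m\in\N\}$ spans $S_{2-k}(N)$ --- the Petersson product of $g_{2-k,m}$ against a cusp form recovers a nonzero multiple of its $m$-th Fourier coefficient, so a cusp form orthogonal to all of them is zero --- the image of $\xi_k$ contains a spanning set of $S_{2-k}(N)$ and therefore all of it. I expect the genuine difficulty to lie entirely in the boundary weight $2-k=2$, i.e.\ $k=0$ (the case actually used in this paper): there the Poincar\'e series no longer converge absolutely, and one must either invoke conditional (Hecke-trick) convergence of weight-$2$ Poincar\'e series for $\Gamma_0(N)$ with $N>1$, which still span $S_2(N)$, or introduce a spectral parameter $s$ in the seed, prove absolute convergence for $\Re(s)$ large, continue analytically to $s=1-k/2$, and check that harmonicity and the $\xi_k$-computation survive the continuation. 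An alternative that avoids Poincar\'e series is to build, from a given $g\in S_{2-k}(N)$, its non-holomorphic Eichler integral --- which has the right image under $\xi_k$ by construction --- and then correct its failure to be $\Gamma_0(N)$-invariant, a holomorphic period cocycle, back into $H_k(N)$; but the Poincar\'e-series construction is the most self-contained.
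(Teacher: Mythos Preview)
The paper does not give its own proof of this proposition; it is stated as a background fact with a citation to \cite[Theorem~5.10]{BOOK} (Bringmann--Folsom--Ono--Rolen). Your proposal is a correct and essentially complete sketch of the standard argument, which is precisely what one finds in that reference: the intertwining relation $\xi_k(f|_k\gamma)=(\xi_kf)|_{2-k}\gamma$, the factorization $\Delta_k=-\xi_{2-k}\circ\xi_k$ for holomorphy, the Fourier-expansion analysis for cuspidality, and surjectivity via Maa{\ss}--Poincar\'e series (with the Hecke-trick/analytic-continuation caveat at $k=0$). There is nothing to compare against here beyond noting that your write-up reproduces the textbook proof the paper is pointing to.
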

An important tool obtained from the $\xi$-operator is the so-called \emph{Bruinier-Funke pairing}, defined by
$$\{\cdot,\cdot\}: M_k(N)\times H_{2-k}(N)\to\C,\ \{g,f\}:=\langle g,\xi_{2-k}f\rangle,$$
where for $g_1,g_2\in M_k(N)$ such that $g_1g_2$ is a cusp form we define
$$\langle g_1,g_2\rangle :=\frac{1}{[\SLZ:\Gamma_0(N)]}\int_{\Gamma_0(N)\setminus\HH} g_1(\tau)\overline{g_2(\tau)}y^k\frac{dxdy}{y^2}$$
as the classical Petersson scalar product. With this we have the following important result (see \cite[Proposition 5.10]{BOOK}), which follows essentially from an application of Stokes's Theorem.
\begin{proposition}\label{propBF}
Let $g\in M_k(N)$ and $f\in H_{2-k}(N)$. For a cusp $\fraka$ of $\Gamma_0(N)$ of width $h$, fix $\gamma\in\SLZ$ with $\gamma.(i\infty)=\fraka$ and consider the Fourier expansions 
$$(g|\gamma)(\tau)=\sum_{n=0}^\infty a_\fraka(n)q^{n/h}\quad\text{and}\quad (f|\gamma)^+(\tau)=\sum_{m\gg-\infty} b_\fraka(n)q^{n/h}.$$
Then we have
$$\{g,f\}=\sum_\fraka\sum_{n\leq 0} a_\fraka(-n)b_\fraka(n).$$
\end{proposition}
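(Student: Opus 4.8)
The plan is to recognize the defining integral of $\{g,f\}$ as the integral of an exact differential form over a fundamental domain of $\Gamma_0(N)$, and then to extract the asserted sum of Fourier coefficients from the resulting boundary integrals via Stokes's theorem.

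First I would unwind $\{g,f\}=\langle g,\xi_{2-k}f\rangle$. Using $\xi_{2-k}f=2iy^{2-k}\overline{\partial_{\bar\tau}f}$ together with the holomorphy of $g$, one checks directly that
$$g(\tau)\,\overline{\xi_{2-k}f(\tau)}\,y^{k}\,\frac{dx\,dy}{y^{2}}=-\,d\omega,\qquad\text{where }\ \omega:=g(\tau)\,f(\tau)\,d\tau,$$
because $d\omega=\partial_{\bar\tau}(gf)\,d\bar\tau\wedge d\tau=g\,(\partial_{\bar\tau}f)\,d\bar\tau\wedge d\tau$ and $d\bar\tau\wedge d\tau=2i\,dx\wedge dy$. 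Since $g$ has weight $k$ and $f$ has weight $2-k$, the product $gf$ is a (non-holomorphic) modular form of weight $2$ for $\Gamma_0(N)$, so $\omega$ is $\Gamma_0(N)$-invariant and descends to a smooth $1$-form on $\Gamma_0(N)\backslash\HH$ (with only exponential growth at the cusps). Consequently $\{g,f\}=[\SLZ:\Gamma_0(N)]^{-1}\int_{\calF}(-d\omega)$ for a fundamental domain $\calF$.

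Next I would apply Stokes's theorem on a truncated fundamental domain. For large $T$, let $\calF_T\subset\calF$ be obtained by deleting a small horoball neighbourhood of each cusp $\fraka$, so that $\int_{\calF_T}(-d\omega)=-\int_{\partial\calF_T}\omega$. The non-horocyclic arcs of $\partial\calF_T$ occur in $\Gamma_0(N)$-equivalent pairs traversed with opposite orientations, so by invariance of $\omega$ they cancel, and there remains only the sum over $\fraka$ of the integrals along the bounding horocycles. For each cusp pick $\gamma\in\SLZ$ with $\gamma.(i\infty)=\fraka$; the substitution $\tau\mapsto\gamma\tau$ turns the horocycle at $\fraka$ into a horizontal segment of length $h$ (the width of $\fraka$) at height $T$, and the integral becomes, up to orientation, $\int_0^{h}(g|\gamma)(x+iT)\,(f|\gamma)(x+iT)\,dx$. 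Integration over a full period kills every nonzero Fourier frequency, so in the limit $T\to\infty$ only the constant term (coefficient of $q^0$) of the product $(g|\gamma)(f|\gamma)$ survives. Here one uses that the shadow $\xi_{2-k}f$ lies in $S_k(N)$: being a cusp form it has vanishing constant terms at all cusps, which forces $f^-$ to decay exponentially at every cusp; hence the constant term of $(g|\gamma)(f|\gamma)$ equals that of $(g|\gamma)\,(f|\gamma)^{+}$, namely $\sum_{n\ge 0}a_\fraka(n)b_\fraka(-n)=\sum_{n\le 0}a_\fraka(-n)b_\fraka(n)$, using that $a_\fraka(n)=0$ for $n<0$.

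Finally I would carefully track the orientations of the horocycles, the cusp widths $h$, and the normalizing factor $[\SLZ:\Gamma_0(N)]^{-1}$, and verify that all these constants combine to give exactly $\{g,f\}=\sum_\fraka\sum_{n\le 0}a_\fraka(-n)b_\fraka(n)$. I expect the main obstacle to be precisely this endgame of constant-matching, together with the two analytic points it rests on: justifying that the truncated boundary integrals converge as $T\to\infty$, and that the non-holomorphic part $f^-$ contributes nothing in the limit --- which is exactly where cuspidality of the shadow enters. The Stokes computation underlying everything else is routine.
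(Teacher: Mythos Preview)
Your proposal is correct and follows precisely the Stokes-theorem approach the paper indicates (the paper does not give a detailed proof but refers to \cite[Proposition~5.10]{BOOK} and notes that the result ``follows essentially from an application of Stokes's Theorem''). The computation you outline---rewriting the Petersson integrand as $-d(gf\,d\tau)$, cancelling the paired sides of a truncated fundamental domain, and reading off the constant terms of $(g|\gamma)(f|\gamma)^+$ from the horocycle integrals using exponential decay of $f^-$ (from cuspidality of the shadow)---is exactly the standard argument found in the cited reference.
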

An easy and well-known consequence of this is the following corollary.
\begin{corollary}\label{corBF}
A harmonic Maa{\ss} form in $H_{2-k}(N)$ with no pole at any cusp is a holomorphic modular form.
\end{corollary}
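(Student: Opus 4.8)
The plan is to show that the $\xi$-operator annihilates $f$ and then invoke \Cref{propxi}. Suppose $f\in H_{2-k}(N)$ has no pole at any cusp; in the notation of \Cref{propBF} this means that the holomorphic part of $f$ satisfies $b_\fraka(n)=0$ for every cusp $\fraka$ and every $n<0$ (the constant terms $b_\fraka(0)$ need not vanish). Set $g:=\xi_{2-k}f$, which by \Cref{propxi} lies in $S_k(N)\subseteq M_k(N)$. The goal is to prove $g=0$.

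First I would evaluate the Bruinier-Funke pairing $\{g,f\}$ in two ways. By definition, $\{g,f\}=\langle g,\xi_{2-k}f\rangle=\langle g,g\rangle$, the squared Petersson norm of $g$; this is legitimate and the defining integral converges because $g$ is cuspidal. On the other hand, \Cref{propBF} gives $\{g,f\}=\sum_\fraka\sum_{n\leq 0}a_\fraka(-n)b_\fraka(n)$, where the $a_\fraka$ are the Fourier coefficients of $g$ at the cusp $\fraka$. Since $g$ is a cusp form, $a_\fraka(0)=0$ for every cusp, so the $n=0$ term of each inner sum vanishes; since $f$ has no pole at any cusp, $b_\fraka(n)=0$ for all $n<0$, so the remaining terms vanish as well. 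Hence $\{g,f\}=0$, and therefore $\langle g,g\rangle=0$, which forces $g=0$ by positive-definiteness of the Petersson inner product on $S_k(N)$.

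Having established $\xi_{2-k}f=0$, \Cref{propxi} yields $f\in M_{2-k}^!(N)$, i.e. $f$ is a weakly holomorphic modular form of weight $2-k$ for $\Gamma_0(N)$. Since by hypothesis $f$ has no pole at any cusp, it is holomorphic at every cusp, and thus $f\in M_{2-k}(N)$, which is the assertion.

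I do not expect a genuine obstacle here: the argument is a direct combination of \Cref{propxi} and \Cref{propBF}. The only points requiring a little care are the bookkeeping of which coefficients are forced to vanish (the nonzero constant terms of $f^+$ are harmless because they are paired against the vanishing constant terms of the cusp form $g$) and the observation that cuspidality of $g$ guarantees convergence of the Petersson integral defining $\langle g,g\rangle$.
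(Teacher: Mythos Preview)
Your argument is correct and follows essentially the same approach as the paper's (suppressed) proof: both compute the Bruinier--Funke pairing via \Cref{propBF}, use that the cusp form side has vanishing constant terms while the harmonic Maa{\ss} side has vanishing negative coefficients, and then invoke positive-definiteness of the Petersson inner product to conclude $\xi_{2-k}f=0$. The only cosmetic difference is that the paper pairs $f$ against an arbitrary $g\in S_k(N)$ to deduce that $\xi_{2-k}f$ is orthogonal to all of $S_k(N)$, whereas you pair directly against $g=\xi_{2-k}f$ to obtain $\|\xi_{2-k}f\|^2=0$; these are equivalent.
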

\subsection{Operators on modular forms}\label{secOperators}
Hecke operators are certainly the most important operators on modular forms. We first review their definition and some basic properties. For this, consider for any $N,m\in\N$ the set
$$\calM_m(N)=\left\{M=\abcd a b c d\in\Z^{2\times 2}\: :\: \det M=m,\ N\mid c,\ \gcd(a,N)=1\right\}.$$
The group $\Gamma_0(N)$ acts on $\calM_m(N)$ by left-multiplication and we let $\beta_1,...,\beta_s$ denote a set of coset representatives of this action. For any function $f:\HH\to \C$ transforming like a modular form of weight $k\in\Z$ under $\Gamma_0(N)$ we then define the $m$-th Hecke operator acting on $f$ by
\begin{gather}\label{eqHeckedef}
f|T_m^{(N)}=f|T_m=m^{k/2-1}\sum_{\beta\in\Gamma_0(N)\setminus\calM_m(N)} f|_k\beta,
\end{gather}
where we extend the action of the weight $k$ slash operator to matrices with positive discriminant in the usual way by
$$(f|_k\gamma)(\tau)=(\det \gamma)^{k/2}(c\tau+d)^{-k}f\left(\frac{a\tau +b}{c\tau+d}\right).$$
We usually omit the indication of the level of the Hecke operator if it is clear from context or not relevant for the action.

These operators form a commutative algebra and they are multiplicative, i.e. one has $T_mT_n=T_{mn}$ for any coprime $m,n$. Their action on Fourier expansions is particularly easy to describe when $m=p$ is prime. Then we have
$$f|T_p=\begin{cases} f|U_p+p^{k-1}f|B_p & \text{if }p\nmid N \\ f|U_p & \text{if }p\mid N\end{cases}$$
where for $f(\tau)=\sum_{n\in\Z}a_f(n,y)q^n$ we set
$$(f|B_m)(\tau)=f(m\tau)=\sum_{n\in\Z} a_f(n,y)q^{mn}\qquad\text{and}\qquad (f|U_m)(\tau)=\sum_{n\in\Z} a_f(mn,y/m)q^n.$$
We record the following easy corollary (\cite[Corollary 13.3.10]{CS}) of the Multiplicity 1-Theorem (see for instance \cite[Theorem 13.3.9]{CS}).
\begin{lemma}\label{lemnew}
Let $H$ be an operator on the space $S_k(N)$ of weight $k$ cusp forms for $\Gamma_0(N)$ which commutes with all Hecke operators $T_p$ for $p\nmid N$. Then any newform is an eigenfunction of $H$.
\end{lemma}
Another important set of operators is given by the \emph{Atkin-Lehner involutions}. For any exact divisor $Q$ of $N$, i.e. $Q\mid N$ and $\gcd(Q,N/Q)=1$, we define the Atkin-Lehner operator via the matrix 
\begin{gather}\label{defAL}
W_Q=\frac{1}{\sqrt{Q}}\begin{pmatrix}
Qx & y \\ Nz & Qt
\end{pmatrix},
\end{gather}
where $x,y,z,t\in\Z$ are chosen so that $\det W_Q=1$. In the following proposition, we collect several well-known properties of these operators which will become important in the proof of \Cref{thmspan}. These can be found for instance in \cite[Lemma 6.6.4, Proposition 13.2.6]{CS}.
\begin{proposition}\label{propAL}
Let $m\in\N$ and $Q,Q'$ exact divisors of $N$ and let $f:\HH\to\C$ be a function transforming like a modular form of weight $k\in 2\Z$ for $\Gamma_0(N)$. Then the following are true.
\begin{enumerate}[(i)]
\item As matrices, we have $W_Q=B_Q^{-1}\gamma=\gamma'B_Q$ for $\gamma,\gamma'\in\Gamma_0(N/Q)$ and where we set $B_m:=\frac{1}{\sqrt{m}}\left(\begin{smallmatrix} m & 0 \\ 0 & 1\end{smallmatrix}\right)$.
\item $W_Q$ normalizes $\Gamma_0(N)$.
\item We have $W_Q^2\in\Gamma_0(N)$ and $f|W_Q|W_{Q'}=f|W_{Q'}|W_Q=f|W_{QQ'}$
\item For $\gcd(m,Q)=1$ we have $f|W_Q|T_m=f|T_m|W_Q$.
\item For $\gcd(m,Q)=1$ we have $f|W_Q|B_m=f|B_m|W_Q$.
\item If $Q=p$ is prime, then $f|U_p+p^{k/2-1}f|W_p$ transforms like a modular form for $\Gamma_0(N/p)$.
\end{enumerate}
\end{proposition}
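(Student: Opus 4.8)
The unifying principle is that the weight-$k$ slash operator is a genuine right action, $(f|_kA)|_kB=f|_k(AB)$ for $A,B\in\GL_2^+(\R)$, and that $f|_k\gamma=f$ for every $\gamma\in\Gamma_0(N)$. Consequently each of (i)--(vi) reduces to an identity of $2\times2$ matrices holding modulo the subgroup that fixes $f$, and the plan is to verify each such matrix identity by elementary divisibility considerations. The three inputs used throughout are: the normalization $\det W_Q=1$, which upon setting $M:=\sqrt{Q}\,W_Q=\abcd{Qx}{y}{Nz}{Qt}$ reads $Qxt-(N/Q)yz=1$; the exact-divisor hypothesis $\gcd(Q,N/Q)=1$; and the Chinese Remainder Theorem, used to check integrality and divisibility of matrix entries separately modulo $Q$ and modulo $N/Q$.

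For (i) I would multiply out directly. With $B_Q=\tfrac{1}{\sqrt Q}\abcd Q001$ one finds $B_QW_Q=\abcd{Qx}{y}{(N/Q)z}{t}$ and $W_QB_Q^{-1}=\abcd{x}{y}{(N/Q)z}{Qt}$; each has determinant $1$, integer entries, and lower-left entry divisible by $N/Q$, hence lies in $\Gamma_0(N/Q)$, giving $W_Q=B_Q^{-1}\gamma=\gamma'B_Q$. For (ii), since all determinants equal $1$, conjugation yields $W_Q\alpha W_Q^{-1}=M\alpha M^{-1}=\tfrac1Q M\alpha\,\mathrm{adj}(M)$ for $\alpha\in\Gamma_0(N)$; reducing $M\equiv\abcd0y00\pmod Q$ and $M\equiv\abcd{Qx}{y}0{Qt}\pmod{N/Q}$ and invoking CRT together with $\det M=Q$ forces this to be integral with lower-left entry divisible by $N$, and applying the same to $W_Q^{-1}$ gives $W_Q\Gamma_0(N)W_Q^{-1}=\Gamma_0(N)$. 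Part (iii) is of the same flavour: $M\equiv\abcd0y00\pmod Q$ gives $M^2\equiv0\pmod Q$, so $W_Q^2=\tfrac1QM^2\in\SL_2(\Z)$, and its lower-left entry $Nz(x+t)$ is divisible by $N$, whence $W_Q^2\in\Gamma_0(N)$; for coprime exact divisors $Q,Q'$ one checks that $W_QW_{Q'}$ satisfies the congruences characterizing $W_{QQ'}$ modulo $\Gamma_0(N)$, which yields both the product law and, by the symmetry of that characterization, the commutativity.

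Relation (iv) is handled by relabelling cosets. Expanding $f|W_Q|T_m=m^{k/2-1}\sum_\beta f|(W_Q\beta)$ over representatives $\beta$ of $\Gamma_0(N)\backslash\calM_m(N)$, the key lemma is that $W_Q\,\calM_m(N)\,W_Q^{-1}=\calM_m(N)$ whenever $\gcd(m,Q)=1$: the determinant $m$ is unchanged, the condition $N\mid c$ is preserved by the computation in (ii), and $\gcd(a,N)=1$ survives precisely because $\gcd(m,Q)=1$ prevents the new upper-left entry from acquiring a factor of $Q$ (this is the one place coprimality is essential). Together with (ii), conjugation by $W_Q$ is then a bijection of $\Gamma_0(N)\backslash\calM_m(N)$, so writing $W_Q\beta=(W_Q\beta W_Q^{-1})W_Q=\beta'W_Q$ and summing over the relabelled $\beta'$ gives $f|W_Q|T_m=f|T_m|W_Q$. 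Relation (v) is instead a single matrix identity, with the one subtlety that $B_m$ raises the level: $f|B_m$ transforms for $\Gamma_0(Nm)$, so the natural reading of (v) compares Atkin--Lehner operators at levels $N$ and $Nm$, and one checks that $B_mW_Q$ and $W_QB_m$ differ by an element of the relevant congruence subgroup exactly when $\gcd(m,Q)=1$. The only care needed here is bookkeeping of which level each Atkin--Lehner operator is taken at.

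The genuine obstacle is (vi). The conceptual plan is to identify $g:=f|U_p+p^{k/2-1}f|W_p$ with a constant multiple of the trace $\mathrm{Tr}^N_{N/p}(f)=\sum_\delta f|_k\delta$ over representatives $\delta$ of $\Gamma_0(N)\backslash\Gamma_0(N/p)$, which is $\Gamma_0(N/p)$-invariant by construction. Since $p\,\|\,N$ we have $[\Gamma_0(N/p):\Gamma_0(N)]=p+1$; the $p$ matrices $\abcd1j0p$ underlying $U_p$ are exactly the representatives of $\Gamma_0(N)\backslash\calM_p(N)$ (the $(p+1)$-st being excluded because $\gcd(p,N)=p$), and (i) supplies the missing direction through $W_p=B_p^{-1}\gamma$. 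Carrying this out requires reconciling the determinant-$p$ normalization of these Hecke-type representatives with the determinant-$1$ representatives of $\Gamma_0(N)\backslash\Gamma_0(N/p)$ and then verifying that the $p+1$ resulting pieces exhaust the coset space; equivalently, one checks that $g$ --- which is already $\Gamma_0(N)$-invariant, since $U_p$ preserves $\Gamma_0(N)$ for $p\mid N$ and $W_p$ does so by (ii) --- is invariant under the remaining generators of $\Gamma_0(N/p)$. This last verification, where $p\,\|\,N$ and $\det M=p$ enter decisively, is the step I expect to be the most delicate; once it is in place the claim follows.
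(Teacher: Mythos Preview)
The paper does not supply its own proof of this proposition; it records the statement and refers to \cite[Lemma 6.6.4, Proposition 13.2.6]{CS}. There is therefore nothing to compare against, but your outline is the standard route found in such references and is correct for (i)--(v). Your remark in (v) about the ambient level is apt but harmless: any Atkin--Lehner matrix $W_Q$ for $\Gamma_0(Nm)$ with $\gcd(m,Q)=1$ is automatically an admissible choice of $W_Q$ at level $N$, so one may take the same matrix on both sides.

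For (vi) your trace idea is correct, and the step you flag as delicate becomes routine once the right orbit space is identified. Using $W_p=\gamma'B_p$ from (i), set $\alpha_j=\abcd 1j0p$ for $0\le j\le p-1$ and $\alpha_\infty=\sqrt{p}\,W_p$. These $p+1$ integer matrices of determinant $p$ represent $\Gamma_0(N)\backslash\Delta$, where
\[
\Delta:=\abcd 100p\,\Gamma_0(N/p)=\{M\in\Z^{2\times 2}:\det M=p,\ N\mid M_{21},\ p\mid M_{22}\}.
\]
One checks (using $p\mid N$) that $\Delta$ is stable under left multiplication by $\Gamma_0(N)$, that the $\alpha_i$ lie in distinct orbits, and (using $p\,\|\,N$ via surjectivity of $\Gamma_0(N/p)\to\SL_2(\Z/p\Z)$) that there are exactly $p+1$ orbits. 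Since $\Delta$ is by construction stable under right multiplication by $\Gamma_0(N/p)$, each $\delta\in\Gamma_0(N/p)$ permutes these orbits, giving $\alpha_i\delta=\gamma_i\alpha_{\sigma(i)}$ with $\gamma_i\in\Gamma_0(N)$; invariance of $p^{1-k/2}g=\sum_i f|_k\alpha_i$ under $\Gamma_0(N/p)$ follows at once. This replaces the bookkeeping you anticipated between determinant-$p$ and determinant-$1$ representatives with a single closure check.
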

\subsection{Weierstrass mock modular forms}\label{secWeierstrass}
In this section, we briefly recall the construction of Weierstrass mock modular forms. The idea for this construction is due to Guerzhoy \cite{Guerzhoy1,Guerzhoy2} and was developed further by Alfes, Griffin, Ono, and Rolen \cite{AGOR}.

Let $E$ be an elliptic curve defined over $\Q$ of conductor $N$ defined by the Weierstrass equation
$$E:\:y^2=4x^3-g_2x-g_3.$$
As mentioned in the introduction, this curve (considered over $\C$) is isomorphic to a flat torus $C/\Lambda_E$, where $\Lambda_E\subset \C$ is a $2$-dimensional $\Z$-lattice. This isomorphism is given by
$$\C/\Lambda_E\to E,\: z+\Lambda_E\mapsto \begin{cases} (\wp(\Lambda_E;z),\wp'(\Lambda_E;z)) & \text{if }z\notin\Lambda_E \\ \calO & \text{otherwise},\end{cases}$$
where
$$\wp(\Lambda_E;z)= \frac{1}{z^2}+\sum_{\omega\in\Lambda_E\setminus\{0\}} \left(\frac{1}{(z-\omega)^2}-\frac{1}{\omega^2}\right)$$
denotes the Weierstrass $\wp$-function and $\calO\in E$ denotes the point at infinity. Recall that $\wp(\Lambda_E;z+\omega)=\wp(\Lambda_E;z)$ for all $\omega\in\Lambda_E$ and in fact the field of all \emph{elliptic functions}, i.e. meromorphic functions with this exact periodicity property, is given by $\C(\wp)[\wp']$, where $\wp$ satisfies the differential equation
$$(\wp')^2=4\wp^3-g_2-g_3.$$
The $\wp$-function has poles of order $2$ with residue $0$ at all lattice points by construction. Its Laurent expansion around $0$ is given by
$$\wp(\Lambda_E;z)=\frac{1}{z^2}+\sum_{n=2}^\infty (2n-1)G_{2n}(\Lambda_E)z^{2n-2},$$
where for integers $k>2$, $G_{2n}(\Lambda_E)=\sum_{\omega\in\Lambda_E\setminus\{0\}} \omega^{-k}$ denotes the weight $k$ \emph{Eisenstein series} of $\Lambda_E$, which is of course $0$ if $k$ is odd. The negative antiderivative of the Weierstrass $\wp$-function, called the Weierstrass $\zeta$-function, therefore has simple poles at all lattice points and nowhere else and is given by
$$\zeta(\Lambda_E;z)=\frac 1z +\sum_{\omega\in\Lambda_E\setminus\{0\}}\left(\frac{1}{z-\omega}+\frac 1\omega+\frac{z}{\omega^2}\right)=\frac 1z -\sum_{n=2}^{\infty} G_{2n}(\Lambda_E)z^{2n-1}.$$
However, by Liouville's famous theorems on elliptic functions, there cannot be an elliptic function with simple poles only at lattice points and nowhere else, so $\zeta(\Lambda_E;z)$ is not quite an elliptic function. It was first observed by Eisenstein (in a special case) that there is a canonical way to complete the Weierstrass $\zeta$-function to a function which has the periodicity behaviour of an elliptic function at the expense of no longer being holomorphic. In order to define Eisenstein's completed Weierstrass $\zeta$-function let
$$G_2^*(\Lambda_E)=\lim_{s\to 0} \sum_{\omega\in\Lambda_E\setminus\{0\}} \omega^{-2}|\omega|^{-2s}$$
denote the completed\footnote{The sum defining the Eisenstein series is no longer absolutely convergent for $k=2$. The modification here is sometimes called Hecke's trick \cite{Hecke}.} Eisenstein series of weight $2$. By the famous modularity theorem, there is a newform $f_E\in S_2(N)$ with integer Fourier coefficients associated to $E$ such that the $L$-functions of $E$ and $f_E$ agree, which by Eichler-Shimura theory yields a polynomial map
$$\phi_E:X_0(N)\to \C/\Lambda_E,$$
the \emph{modular parametrization} of $E$. Then the non-holomorphic function
$$\widehat\zeta(\Lambda_E;z)=\zeta(\Lambda_E;z)-G_2^*(\Lambda_E)z-\frac{\deg \phi_E}{4 \pi \Vert f_E\Vert^2}\overline z,$$
where $\Vert \cdot\Vert$ denotes the Petersson norm, satisfies $\widehat\zeta(\Lambda_E;z+\omega)=\widehat\zeta(\Lambda_E;z)$ for all $z\in\C\setminus\Lambda_E$ and $\omega\in\Lambda_E$.

The newform $f_E$ has a Fourier expansion $f_E(\tau)=\sum_{n=1}^\infty a_E(n)q^n$ with $q=e^{2\pi i\tau}$. Denoting by 
$$\Eich_E(\tau)=-2\pi i\int_\tau^{\infty} f_E(t)dt =\sum_{n=1}^\infty \frac{a_E(n)}{n}q^n$$
the \emph{Eichler integral} of $f_E$, one finds the following result \cite[Theorem 1.1]{AGOR}.
\begin{theorem}
The function 
$$\Zed_E(\tau)=\zeta(\Lambda_E;\Eich_E(\tau))-G_2^*(\Lambda_E)\Eich_E(\tau),$$
called the \emph{Weierstrass mock modular form} is a polar mock modular form of weight $0$ for the group $\Gamma_0(N)$. To be more precise, there exists a meromorphic modular function $M_E$ for $\Gamma_0(N)$ such that the function
$$\Zhat_E(\tau)=\widehat\zeta(\Lambda_E;\Eich_E(\tau))-M_E(\tau)$$
is a harmonic Maa\ss~form of weight $0$ for $\Gamma_0(N)$.
\end{theorem}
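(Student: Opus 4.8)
The plan is to show, for a suitable meromorphic modular function $M_E$ on $\Gamma_0(N)$, that $\Zhat_E(\tau)=\widehat\zeta(\Lambda_E;\Eich_E(\tau))-M_E(\tau)$ satisfies the three defining conditions of a harmonic Maa{\ss} form of weight $0$ for $\Gamma_0(N)$. The assertion on $\Zed_E$ then follows at once: since $\widehat\zeta(\Lambda_E;z)=\zeta(\Lambda_E;z)-G_2^*(\Lambda_E)z-\frac{\deg\phi_E}{4\pi\Vert f_E\Vert^2}\overline z$, the function $\Zed_E$ is exactly the holomorphic part of the $\Gamma_0(N)$-invariant (but in general polar) function $\widehat\zeta(\Lambda_E;\Eich_E(\tau))$, hence a polar mock modular form of weight $0$, with completion $\widehat\zeta(\Lambda_E;\Eich_E(\cdot))$; likewise $\Zed_E-M_E$ is a genuine mock modular form with completion $\Zhat_E$.

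First I would record how the Eichler integral transforms and deduce invariance. Because $\Eich_E'(\tau)=2\pi i f_E(\tau)$ with $f_E\in S_2(N)$, for $\gamma=\abcd a b c d\in\Gamma_0(N)$ we have $\frac{\partial}{\partial\tau}\bigl(\Eich_E(\gamma\tau)-\Eich_E(\tau)\bigr)=2\pi i\bigl((f_E|_2\gamma)(\tau)-f_E(\tau)\bigr)=0$, so $\Eich_E(\gamma\tau)=\Eich_E(\tau)+\omega_\gamma$ with $\omega_\gamma=-2\pi i\int_{\gamma(i\infty)}^{i\infty}f_E(t)\,dt$ a period of $f_E$; by Eichler--Shimura theory $\omega_\gamma\in\Lambda_E$ (this is where one uses that $E$ is the strong Weil curve, the general case reducing to it by an isogeny). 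Combined with the ellipticity $\widehat\zeta(\Lambda_E;z+\omega)=\widehat\zeta(\Lambda_E;z)$ for $\omega\in\Lambda_E$, this shows that $\widehat\zeta(\Lambda_E;\Eich_E(\cdot))$, and hence $\Zhat_E$ (as $M_E$ is a modular function), is invariant under the weight $0$ slash operator of $\Gamma_0(N)$. Harmonicity is equally direct: the explicit shape of $\widehat\zeta$ gives $\frac{\partial}{\partial\overline z}\widehat\zeta(\Lambda_E;z)=-\frac{\deg\phi_E}{4\pi\Vert f_E\Vert^2}$, a constant, so by the chain rule $\frac{\partial}{\partial\overline\tau}\widehat\zeta(\Lambda_E;\Eich_E(\tau))=\frac{\deg\phi_E}{4\pi\Vert f_E\Vert^2}\,\overline{\Eich_E'(\tau)}$ is antiholomorphic in $\tau$, whence $\frac{\partial}{\partial\tau}\frac{\partial}{\partial\overline\tau}\widehat\zeta(\Lambda_E;\Eich_E(\tau))\equiv 0$ and so $\Delta_0\Zhat_E\equiv 0$ wherever $\Zhat_E$ is smooth; the same computation identifies $\xi_0\Zhat_E$ as a nonzero multiple of $f_E\in S_2(N)$, the shadow.

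It remains to control the singularities and to choose $M_E$. In $\HH$, $\widehat\zeta(\Lambda_E;\Eich_E(\tau))$ is singular precisely where $\Eich_E(\tau)\in\Lambda_E$, i.e. where $\phi_E$ sends $\tau$ to the origin $\calO$; since $\phi_E$ has finite degree, these are finitely many points modulo $\Gamma_0(N)$, with a finite-order pole at each (that of $\zeta(\Lambda_E;\cdot)$ pulled back through $\Eich_E$). At a cusp $\fraka$, the Eichler integral extends continuously (the integral defining $\Eich_E(\fraka)$ converges because $f_E$ is cuspidal), and $\widehat\zeta(\Lambda_E;\Eich_E(\cdot))$ is bounded there if $\Eich_E(\fraka)\notin\Lambda_E$ and has a finite-order pole otherwise; at $\fraka=\infty$, where $\Eich_E(\tau)=q+O(q^2)$, one gets $\widehat\zeta(\Lambda_E;\Eich_E(\tau))=q^{-1}+O(1)$. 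One then needs a meromorphic modular function $M_E$ for $\Gamma_0(N)$ --- an element of the function field of $X_0(N)$ --- whose principal parts agree with those of $\widehat\zeta(\Lambda_E;\Eich_E(\tau))$ at every point of $\Gamma_0(N)\backslash\HH$ and at every cusp except $\infty$. This is a Mittag--Leffler problem on the compact Riemann surface $X_0(N)$: prescribing principal parts at finitely many points is obstructed only by a $g$-dimensional condition ($g$ the genus of $X_0(N)$), namely the vanishing of the residue pairing of the prescribed principal part against each holomorphic differential $h(\tau)\,d\tau$, $h\in S_2(N)$, and that obstruction can always be cancelled by permitting $M_E$ a pole of sufficiently high but finite order at the cusp $\infty$, since the functionals $h\,d\tau\mapsto\Res_\infty(q^{-j}h\,d\tau)$, $j\ge 1$, recover the Fourier coefficients of $h$ at $\infty$ and hence span the dual of $S_2(N)$. (In the cases relevant to this paper, where $X_0(N)\cong E$, there are no interior poles and one may simply take $M_E=0$.) Granting such an $M_E$, the function $\Zhat_E$ is smooth on all of $\HH$, invariant of weight $0$ under $\Gamma_0(N)$, annihilated by $\Delta_0$, and has at most finite-order (``linear exponential'') growth at every cusp with a genuine pole only at $\infty$; hence $\Zhat_E\in H_0(N)$, which completes the proof.

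The step I expect to be the main obstacle is the construction of $M_E$ in general: one must be sure that the finitely many interior and cuspidal principal parts of $\widehat\zeta(\Lambda_E;\Eich_E(\tau))$ can be realized by a single modular function \emph{without} creating new singularities in $\HH$, which is exactly where the $g$-dimensional cohomological obstruction and the freedom to prescribe a high-order pole at $\infty$ come into play. A minor point glossed over above is the normalization guaranteeing $\omega_\gamma\in\Lambda_E$ on the nose rather than merely in a commensurable lattice --- this is where one invokes that $E$ is the strong Weil curve (equivalently, that the relevant Manin constant equals $1$), the general case following by composing the modular parametrization with an isogeny.
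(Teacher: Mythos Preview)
The paper does not prove this theorem at all: it is quoted verbatim as \cite[Theorem~1.1]{AGOR} and used as a black box, so there is no ``paper's own proof'' to compare against. Your sketch is therefore not competing with anything in the present paper; it is essentially a reconstruction of the original argument of Alfes--Griffin--Ono--Rolen.

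That said, your outline is correct and tracks the standard proof closely. The three ingredients --- $\Gamma_0(N)$-invariance via the period relation $\Eich_E(\gamma\tau)=\Eich_E(\tau)+\omega_\gamma$ with $\omega_\gamma\in\Lambda_E$ together with the ellipticity of $\widehat\zeta$; harmonicity from the fact that $\partial_{\overline z}\widehat\zeta$ is constant so that $\partial_{\overline\tau}\widehat\zeta(\Lambda_E;\Eich_E(\tau))$ is antiholomorphic; and the Mittag--Leffler/Riemann--Roch construction of $M_E$ absorbing the finitely many interior and cuspidal poles by allowing a high-order pole at $\infty$ --- are exactly the ones used in \cite{AGOR}. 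Two small remarks: your chain-rule computation drops a sign (one gets $-\tfrac{\deg\phi_E}{4\pi\Vert f_E\Vert^2}\,\overline{\Eich_E'(\tau)}$, not $+$), though this is immaterial for harmonicity; and your caveat about $\omega_\gamma\in\Lambda_E$ requiring the strong Weil curve normalisation (Manin constant $1$) is well taken --- the paper sidesteps this by working throughout with the $\Gamma_0(N)$-optimal curve in the genus-$1$ levels, but the general statement as phrased does need that normalisation or an isogeny adjustment, exactly as you note.
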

It is immediately clear from the definition that the function $\Zed_E$ has poles precisely where the value of the Eichler integral $\Eich_E(\tau)$ lies in the period lattice $\Lambda_E$. It is an open problem to classify those points $\tau$ in the complex upper half-plane $\HH$ where this occurs, but the following lemma, whose proof can be found for example in \cite{AliMani}, allows us to rule out poles in the situation where $E$ and the modular curve $X_0(N)$ are actually isomorphic, so where the degree of $\phi_E$ is $1$.
\begin{lemma}
Let $E$ be the strong Weil curve of conductor $N$ such that $X_0(N)$ has genus $1$, i.e. $N\in\{11,14,15,17,19,20,21,24,27,32,36,49\}$. Then the Weierstrass mock modular form $\Zed_E$ has no poles in $\HH$. 
\end{lemma}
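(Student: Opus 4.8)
The plan is to show that the poles of $\Zed_E$ in $\HH$ correspond, under the modular parametrization $\phi_E\colon X_0(N)\to\C/\Lambda_E$, to the points where the image lies in $\Lambda_E$, i.e. to the preimages of the point $\calO$ on $E$; and then to use that $\phi_E$ is an isomorphism in these genus-one cases to conclude that the only such point is the cusp $\infty$, where $\Zed_E$ is holomorphic by its $q$-expansion. The key input is the classical fact from Eichler--Shimura theory that the Eichler integral $\Eich_E(\tau)=\sum_{n\geq 1}\frac{a_E(n)}{n}q^n$ descends to the modular parametrization: for $\tau\in\HH$, the point $\Eich_E(\tau)\pmod{\Lambda_E}$ is exactly $\phi_E(\tau)$, where we identify $X_0(N)(\C)$ with $\Gamma_0(N)\backslash\HH^*$. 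More precisely, $\Eich_E(\tau)-\Eich_E(\gamma\tau)$ is a period of $f_E$, i.e. an element of $\Lambda_E$, for every $\gamma\in\Gamma_0(N)$, so $\tau\mapsto\Eich_E(\tau)\bmod\Lambda_E$ is well defined on $\Gamma_0(N)\backslash\HH$ and extends holomorphically to the cusps, agreeing with $\phi_E$ up to a translation which we normalize so that $\infty\mapsto 0$.

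First I would recall from the statement of the preceding theorem (the one of \cite{AGOR}) that $\Zed_E(\tau)=\zeta(\Lambda_E;\Eich_E(\tau))-G_2^*(\Lambda_E)\Eich_E(\tau)$, and observe that $\zeta(\Lambda_E;z)$ is holomorphic on $\C\setminus\Lambda_E$ with only simple poles at the lattice points, while $G_2^*(\Lambda_E)\Eich_E(\tau)$ is holomorphic everywhere on $\HH$ (as $\Eich_E$ is a convergent $q$-series there). Hence $\Zed_E$ can have a pole at $\tau_0\in\HH$ only if $\Eich_E(\tau_0)\in\Lambda_E$, that is, only if $\phi_E(\tau_0)=\calO$ in $E(\C)\cong\C/\Lambda_E$. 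Since for $N$ in the listed set the map $\phi_E$ is an isomorphism $X_0(N)\xrightarrow{\sim}E$, there is a \emph{unique} point of $X_0(N)$ mapping to $\calO$; normalizing $\phi_E$ so that the cusp $\infty$ maps to $\calO$ (which is forced by $\Eich_E(\infty)=0$), this unique point is $\infty$ itself. Therefore $\Eich_E(\tau)\notin\Lambda_E$ for every $\tau\in\HH$, and $\Zed_E$ has no poles on $\HH$.

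The main obstacle — and the reason the statement is nontrivial rather than a triviality about $q$-expansions — is justifying that the \emph{only} preimage of $\calO$ is the cusp, which genuinely uses that $\deg\phi_E=1$; for a modular parametrization of degree $d>1$ there would be (counted with multiplicity) $d$ points above $\calO$, and the ones in $\HH$ would produce genuine poles of $\Zed_E$. Concretely, one must rule out that some $\tau_0\in\HH$ has $\Eich_E(\tau_0)$ equal to a nonzero lattice point, or to a lattice point while $\tau_0$ is $\Gamma_0(N)$-inequivalent to $\infty$; both are excluded precisely by injectivity of $\phi_E$ on $X_0(N)$. I would cite \cite{AliMani} for the detailed argument, where this is carried out, the point being that in the twelve genus-one cases the curve $E$ is the strong Weil curve and the optimal parametrization has degree one, so $X_0(N)\cong E$ as Riemann surfaces and the cusp $\infty$ is the unique point lying over the identity $\calO$ of the group law.
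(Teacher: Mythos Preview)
Your argument is correct and is precisely the standard one: poles of $\Zed_E$ in $\HH$ occur exactly where $\Eich_E(\tau)\in\Lambda_E$, i.e.\ where $\phi_E(\tau)=\calO$, and since $\deg\phi_E=1$ in the listed genus-one levels the unique preimage of $\calO$ is the cusp $\infty$. Note that the paper does not give its own proof of this lemma but simply refers the reader to \cite{AliMani}; your proposal supplies exactly the argument behind that citation (and you also cite the same source), so there is nothing to compare beyond observing that you have made explicit what the paper leaves as a reference.
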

For the purpose of this paper, it is important to consider the behaviour of the (completed) Weierstrass mock modular form at other cusps than infinity. For this, we need the following slight generalization of \cite[Theorem 1.2]{AGOR}.
\begin{proposition}\label{propWZnormalizer}
Let $\nu\in\Norm(\Gamma_0(N))$, the normalizer of $\Gamma_0(N)$ in $\SL_2(\R)$, which commutes with all Hecke operators $T_p$ with prime $p\nmid N$. Then we have
$$\left(\Zhat_E|_0\nu\right)(\tau)=\widehat\zeta(\Lambda_E;\lambda_\nu(\Eich_E(\tau)-\Omega_{\nu^{-1}}(f_E)))$$
where $\Omega_\nu(f_E)=-2\pi i\int_{\nu^{-1}\infty}^\infty f_E(t)dt$ and $\lambda_\nu$ is the eigenvalue of $f_E$ under $\nu$ (see \Cref{lemnew}). In particular for $-\lambda_\nu\Omega_\nu(f_E)\notin\Lambda_E$ we find the asymptotic
$$(\Zed_E|_0\nu)(iy)\sim \widehat\zeta(-\lambda_\nu\Omega_\nu(f_E))+\exp(-\alpha y)\qquad\text{as } y\to\infty,$$
for some $\alpha>0$.
\end{proposition}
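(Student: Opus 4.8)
The plan is to follow the proof of \cite[Theorem 1.2]{AGOR} — which is precisely the special case $\nu=W_N$ of the present statement — the only new input being that for a general $\nu$ one must first identify $f_E$ as an eigenform of the weight-$2$ slash action of $\nu$. Since $\nu$ normalizes $\Gamma_0(N)$, the operator $g\mapsto g|_2\nu$ preserves $S_2(N)$, and by hypothesis it commutes with all Hecke operators $T_p$, $p\nmid N$ (for $\nu=W_Q$ an Atkin--Lehner involution this commutation is automatic by \Cref{propAL}(iv)); hence \Cref{lemnew} yields $f_E|_2\nu=\lambda_\nu f_E$ for a scalar $\lambda_\nu$. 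When $\nu^2\in\Gamma_0(N)$, in particular when $\nu$ is an Atkin--Lehner involution, one has $\lambda_\nu^2=1$, so $\lambda_\nu\in\{\pm1\}$ and $\widehat\zeta(\Lambda_E;\lambda_\nu z)=\lambda_\nu\widehat\zeta(\Lambda_E;z)$, since $\widehat\zeta(\Lambda_E;\cdot)$ is odd and $\pm\Lambda_E=\Lambda_E$.

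The heart of the argument is the transformation behaviour of the Eichler integral. Writing $\Eich_E(\nu\tau)=-2\pi i\int_{\nu\tau}^{i\infty}f_E(t)\,dt$ and substituting $t=\nu w$ — so that $f_E(t)\,dt=(f_E|_2\nu)(w)\,dw=\lambda_\nu f_E(w)\,dw$, using $\det\nu=1$ — and then splitting the integral from $\tau$ to $\nu^{-1}(i\infty)$ at the point $i\infty$, one obtains
\[
\Eich_E(\nu\tau)=\lambda_\nu\bigl(\Eich_E(\tau)-\Omega_{\nu^{-1}}(f_E)\bigr),
\]
where $\Omega_{\nu^{-1}}(f_E)=-2\pi i\int_{\nu\infty}^{i\infty}f_E(t)\,dt$ is the period-type constant produced by the boundary of the split; it agrees modulo $\Lambda_E$ with the quantity $\Omega_\nu(f_E)$, because $\nu\infty$ and $\nu^{-1}\infty$ are $\Gamma_0(N)$-equivalent whenever $\nu^2\in\Gamma_0(N)$. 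As a consistency check, for $\nu=W_N$ this gives $\Omega_{W_N}(f_E)=-2\pi i\int_0^{i\infty}f_E(t)\,dt=L(E,1)$, which is how $\widehat\zeta(\Lambda_E;L(E,1))$ enters \Cref{thmdim}.

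Applying $\widehat\zeta(\Lambda_E;\cdot)$ to this relation, one invokes Eisenstein's theorem that $\widehat\zeta(\Lambda_E;\cdot)$ is $\Lambda_E$-periodic, together with the standard fact that the periods $-2\pi i\int_{\gamma^{-1}\infty}^{\infty}f_E$ of $f_E$ over $\gamma\in\Gamma_0(N)$ all lie in $\Lambda_E$ — which is exactly what makes $\widehat\zeta(\Lambda_E;\Eich_E(\tau))$ a well-defined, $\Gamma_0(N)$-invariant function. In the levels relevant here, where $E$ is a model for $X_0(N)$ and hence $\Eich_E(\tau)\notin\Lambda_E$ for all $\tau\in\HH$ by the no-poles lemma quoted above, the meromorphic modular function $M_E$ in the definition of $\Zhat_E$ can be taken to be $0$, so that $\Zhat_E|_0\nu(\tau)=\widehat\zeta(\Lambda_E;\Eich_E(\nu\tau))=\widehat\zeta(\Lambda_E;\lambda_\nu(\Eich_E(\tau)-\Omega_{\nu^{-1}}(f_E)))$ as claimed (in general one keeps the extra term $M_E|_0\nu$, again a meromorphic modular function for $\Gamma_0(N)$ since $\nu$ normalizes $\Gamma_0(N)$). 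For the asymptotic, note that $\Eich_E(iy)=\sum_{n\ge1}\tfrac{a_E(n)}{n}e^{-2\pi ny}=e^{-2\pi y}+O(e^{-4\pi y})\to0$; since by hypothesis $-\lambda_\nu\Omega_\nu(f_E)\notin\Lambda_E$, the point $-\lambda_\nu\Omega_{\nu^{-1}}(f_E)$ (congruent to it modulo $\Lambda_E$) is not a pole of $\widehat\zeta(\Lambda_E;\cdot)$, so the latter is real-analytic, hence Lipschitz, nearby; substituting $\tau=iy$ into the identity and passing to holomorphic parts — the non-holomorphic contribution being a constant multiple of $\overline{\Eich_E(\tau)}$, which decays exponentially — gives $(\Zed_E|_0\nu)(iy)=\widehat\zeta(\Lambda_E;-\lambda_\nu\Omega_\nu(f_E))+O(e^{-2\pi y})$ as $y\to\infty$.

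I expect the only genuinely delicate point to be the second step: one must correctly track the period-type constant $\Omega_{\nu^{-1}}(f_E)$ and, above all, keep in mind that $\Eich_E$ is well defined only up to $\Gamma_0(N)$-periods — all of which lie in $\Lambda_E$, so that the whole expression becomes single-valued precisely after composition with the periodic function $\widehat\zeta(\Lambda_E;\cdot)$. It is also here that the hypothesis on $\nu$ (commuting with all $T_p$, $p\nmid N$) is used, via \Cref{lemnew}, to produce $\lambda_\nu$; the remaining manipulations are formal and run exactly as in \cite{AGOR}.
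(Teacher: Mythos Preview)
Your proposal is correct and follows essentially the same route as the paper's own proof: both reduce to the computation in \cite[Theorem~1.2]{AGOR}, invoking \Cref{lemnew} to obtain $f_E|_2\nu=\lambda_\nu f_E$, then substitute $t=\nu w$ in the Eichler integral and split the resulting integral at $i\infty$ to produce the period term $\Omega_{\nu^{\pm1}}(f_E)$. Your additional remarks on $M_E$, on the $\Lambda_E$-ambiguity of $\Eich_E$, and on the asymptotic are more detailed than what the paper writes out, but amount to the same argument; note only that the bookkeeping of $\Omega_{\nu}$ versus $\Omega_{\nu^{-1}}$ (which you yourself flag) should be tracked carefully, as the identification ``modulo $\Lambda_E$'' you invoke genuinely requires $\nu^2\in\Gamma_0(N)$ and is not available for arbitrary normalizer elements.
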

\begin{proof}
In \cite[Theorem 1.2]{AGOR}, the result is stated for $\nu$ an Atkin-Lehner involution. The exact same proof goes through, only applying \Cref{lemnew} in the substitutions. In fact, the computation goes through even for any elliptic curve $E/\Q$ and matrix $\sigma\in\SL_2(\R)$ such that $\sigma\Gamma_0(N)\sigma^{-1}\cap\Gamma_0(N)$ has finite index in $\Gamma_0(N)$: One finds
\begin{gather}\label{eqZEcusp}
\begin{aligned}
(\Zhat_E|_0\sigma)(\tau)&=\widehat\zeta\left(\Lambda_E;-2\pi i\int_{\sigma.\tau}^{i\infty} f_E(z)dz\right)\\
&=\widehat\zeta\left(\Lambda_E;-2\pi i\int_{\tau}^{i\infty} (f_E|_2\sigma)(z)dz+2\pi i\int_{\sigma^{-1}.(i\infty)}^{i\infty} (f_E|_2\sigma)(z)dz\right)\\
&=\widehat\zeta\left(\Lambda_E;-2\pi i\int_{\tau}^{i\infty} (f_E|_2\sigma)(z)dz+\Omega_{\sigma^{-1}}(f_E)\right).
\end{aligned}
\end{gather}
The claim then follows immediately using \Cref{lemnew}.
\end{proof}
\begin{remark*}
In the case of interest to us, the space $S_2(N)$ is one-dimensional. Since $\Norm(\Gamma_0(N))$ acts on the space of cusp forms, a cusp form in those levels must be an eigenfunction under any element of the normalizer, so the proof goes through here without the appeal to the Multiplicity-one theorem and \Cref{lemnew}.
\end{remark*}
\begin{corollary}\label{corpoincare}
For all levels $N\in\{11,14,15,17,19,20,21,24,27,32,36\}$, the completed Weierstrass mock modular form $\Zhat_E$ for the strong Weil curve $E$ of conductor $N$ has a simple pole at $\infty$ and is constant at all other cusps.
\end{corollary}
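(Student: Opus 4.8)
The plan is to read the behaviour of $\Zhat_E$ at each cusp directly off its description via the completed Weierstrass $\zeta$-function. First I would record that, since $\phi_E$ has degree $1$ in every level on the list, the polar mock modular form $\Zed_E$ --- and hence $\widehat\zeta(\Lambda_E;\Eich_E(\tau)) = \Zed_E(\tau) - \tfrac{1}{4\pi\Vert f_E\Vert^2}\,\overline{\Eich_E(\tau)}$ --- has no poles on $\HH$ by \cite{AliMani}; consequently the meromorphic modular function $M_E$ from \cite{AGOR}, which is constructed precisely to cancel the poles of $\zeta(\Lambda_E;\Eich_E(\tau))$ on $\HH$, of which there are none here, may be taken to be constant, and it is enough to analyze $\widehat\zeta(\Lambda_E;\Eich_E(\tau))$ at the cusps. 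At $\infty$ this is a one-line computation: $\Eich_E(\tau) = q + O(q^2)$ has a simple zero (leading coefficient $a_E(1)=1$) and $\zeta(\Lambda_E;w)=w^{-1}+O(w)$ near $w=0$, so $\widehat\zeta(\Lambda_E;\Eich_E(\tau)) = q^{-1}+O(1)$, a simple pole with leading coefficient $1$.

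For a cusp $\fraka\neq\infty$ I would pick $\sigma\in\SLZ$ with $\sigma.\infty=\fraka$, note that $\sigma\Gamma_0(N)\sigma^{-1}\cap\Gamma_0(N)\supseteq\Gamma(N)$ has finite index, and invoke equation \eqref{eqZEcusp} from the proof of \Cref{propWZnormalizer}, which then gives
\[
(\Zhat_E|_0\sigma)(\tau) = \widehat\zeta\!\left(\Lambda_E;\ -2\pi i\!\int_\tau^{i\infty}(f_E|_2\sigma)(z)\,dz + \Omega_{\sigma^{-1}}(f_E)\right).
\]
Since $f_E$ is a cusp form it vanishes at $\fraka$, so $f_E|_2\sigma$ vanishes at $\infty$ and the integral decays exponentially as $\tau=iy\to i\infty$. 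Provided $\Omega_{\sigma^{-1}}(f_E)\notin\Lambda_E$, the argument of $\widehat\zeta$ then stays, for $y$ large, in a compact region at positive distance from the (simple) poles of $\widehat\zeta(\Lambda_E;\,\cdot\,)$, which sit exactly at the points of $\Lambda_E$; hence $(\Zhat_E|_0\sigma)(iy)=\widehat\zeta(\Lambda_E;\Omega_{\sigma^{-1}}(f_E))+O(e^{-\alpha y})$ for some $\alpha>0$, so $\Zhat_E$ has no pole at $\fraka$ and is constant there in the sense of the corollary, with constant limiting value $\widehat\zeta(\Lambda_E;\Omega_{\sigma^{-1}}(f_E))$. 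Conversely, if $\Omega_{\sigma^{-1}}(f_E)\in\Lambda_E$ the argument would tend to a lattice point and $\Zhat_E|_0\sigma$ would acquire an honest pole at $\infty$, so the nonvanishing of $\Omega_{\sigma^{-1}}(f_E)$ modulo $\Lambda_E$ is precisely what remains to be established.

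To finish I would argue that $\Omega_{\sigma^{-1}}(f_E)\notin\Lambda_E$ whenever $\fraka\neq\infty$: by definition $\Omega_{\sigma^{-1}}(f_E) = -2\pi i\int_{\sigma.\infty}^{i\infty}f_E(z)\,dz$ is the period of $f_E$ along a path from $\fraka$ to $\infty$, hence modulo the period lattice it equals, up to sign, the image $\phi_E(\fraka)\in\C/\Lambda_E$ of the cusp $\fraka$ under the modular parametrization normalized by $\phi_E(\infty)=\calO$. Because $E$ is the strong Weil curve and $X_0(N)$ has genus $1$ with $\phi_E$ an isomorphism in all the levels considered, $\phi_E$ is injective on points, so $\fraka\neq\infty$ forces $\phi_E(\fraka)\neq\calO$, i.e.\ $\Omega_{\sigma^{-1}}(f_E)\notin\Lambda_E$, as needed.

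The step I expect to be the main obstacle is this last identification: one must match the class of $\Omega_{\sigma^{-1}}(f_E)$ modulo the period lattice with $\phi_E$ of the cusp $\fraka$ and then use injectivity of $\phi_E$. Making this rigorous takes some care about the normalization of the modular parametrization, about the precise relation between $\Lambda_E$ and the lattice of periods of $f_E$ (i.e.\ that the Manin constant is $1$ in these cases), and about the genuine $\Lambda_E$-periodicity of $\widehat\zeta(\Lambda_E;\,\cdot\,)$, which is what makes ``$\Zhat_E|_0\sigma$ has a pole'' equivalent to ``the argument of $\widehat\zeta$ lies in $\Lambda_E$''. Everything else --- the reduction to $\widehat\zeta(\Lambda_E;\Eich_E(\tau))$, the computation at $\infty$, and the exponential-decay estimate at the other cusps --- is routine once \eqref{eqZEcusp} and \cite{AliMani} are in hand.
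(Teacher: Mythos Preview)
Your argument is correct and takes a genuinely different route from the paper. The paper proceeds by observing that the normalizer $\Norm(\Gamma_0(N))$ acts transitively on the cusps for the levels in question, invokes \Cref{propWZnormalizer}, and then \emph{numerically computes} the relevant periods $\Omega_\nu(f_E)$ (via \textsc{Pari/Gp}) to verify that none lie in $\Lambda_E$. You instead work with an arbitrary $\sigma\in\SLZ$ via the general identity \eqref{eqZEcusp}, and replace the numerical check by the conceptual observation that $\Omega_{\sigma^{-1}}(f_E)\bmod\Lambda_E$ is (up to sign and the Manin constant) the image $\phi_E(\fraka)$ of the cusp under the modular parametrization; since $\deg\phi_E=1$ here, $\phi_E$ is an isomorphism and therefore separates cusps, so $\fraka\neq\infty$ forces $\phi_E(\fraka)\neq\calO$. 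This is more illuminating and avoids any computer verification, at the price of needing the identification of $\Lambda_E$ with the period lattice of $-2\pi i f_E(z)\,dz$; you correctly flag this as the crux, and it is indeed known that the Manin constant equals $1$ for all the curves on the list, so the argument goes through. The paper's approach, by contrast, is agnostic about the Manin constant but offers no structural explanation. Your handling of $M_E$ and of the expansion at $\infty$ is also slightly more explicit than the paper's, which leaves both implicit.
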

\begin{proof}
For the given $N$, the normalizer $\Norm(\Gamma_0(N))$ acts transitively on the cusps of $\Gamma_0(N)$. Note that for the square-free levels, it is well-known that the Atkin-Lehner operators already act transitively on the cusps. By computing the relevant periods $\Omega_\nu(f_E)$ explicitly\footnote{The authors used the \texttt{mfsymboleval} command in \textsc{Pari/Gp} \cite{PARI} for this.}, we see that none of them are in $\Lambda_E$ and the claim follows. 
\end{proof}
\begin{remark*}
For the remaining level $49$, the normalizer does not act transitively on cusps. However, one can use the fact that 
$$\frac{1}{2\pi i}\frac{\partial}{\partial\tau}\Zhat_E(\tau)=\frac{1}{g_{49}(\tau)}\left(\frac{1}{2400}E_4(7\tau)-\frac{2401}{2400}E_4(49\tau)+G_{49}(\tau)\right),$$
where $g_{49}(\tau)=q + q^2 - q^4 - 3q^8 - 3q^9 + O(q^{11})$ denotes the unique newform in $S_2(49)$ and $G_{49}(\tau)=-q + q^3 - q^4 - q^5 - q^6 + 49/10q^7 + 5q^8 + q^9 - 6q^{10} + 7q^{11} + O(q^{12})\in S_4(49)$, is a weakly holomorphic modular form of weight $2$. The fact that the derivative of $\Zhat_E$ is a weakly holomorphic modular form is a general consequence of Bol's identity and for the identification one notices that by \eqref{eqZEcusp}, the function $\Zhat_E$ and therefore its derivative can have at most a simple pole at any cusp, so that $\frac{1}{2\pi i}\frac{\partial}{\partial\tau}\Zhat_E\cdot g_{49}$ is a holomorphic modular form of level $49$. Due to this identity, it can be checked that its only pole is at infinity (in fact, it vanishes at all other cusps), wherefore, since differentiation commutes with the action of $\SL_2(\R)$ in weight $0$ and doesn't introduce or add any poles, \Cref{corpoincare} is also true for $N=49$. The same argument would of course also work in the cases covered by \Cref{corpoincare}.
\end{remark*}
\section{Proofs}\label{secProofs}
\subsection{Proof of \Cref{thmspan}}\label{secProofsspan}
Before proceeding to the proof of \Cref{thmspan}, we require a general result on the action of Hecke operators on Poincar\'e series. Recall that one may formally define a Poincar\'e series of weight $k$ for the group $\Gamma\leq \SLZ$ by averaging a suitably periodic seed function $\varphi:\HH\to\C$ over the coset representatives $\Gamma_\infty\setminus\Gamma$ where $\Gamma_\infty=\Stab_\Gamma(\infty)$ denotes the stabilizer of the cusp $\infty$, i.e.
$$\mathcal{P}(\Gamma,k,\varphi):=\sum_{\gamma\in \Gamma_\infty\setminus\Gamma} \varphi|_k\gamma.$$
If the defining series converges absolutely, this defines a function which transforms like a modular form of weight $k$ under $\Gamma$ (see for instance \cite[Lemma 8.2]{Ono} and the references therein). Restricting to the case of $\Gamma=\Gamma_0(N)$, one can compute their Fourier expansions fairly explicitly, especially in the most important cases for our purposes, where the seed function is either the exponential function $\varphi(\tau)=\exp(2\pi im\tau)$, $m\in\Z$, or a modified version of the Whittaker function (see \cite[Section 8]{Ono} for details) yielding harmonic Maa{\ss} forms. In those cases, the Fourier coefficients of the $m$th Poincar\'e take the general form
\begin{gather}\label{eqPoincarecoeff}
a_m^{(N,k)}(n)=C_k(n/m)^{(k-1)/2}\sum_{c=1}^\infty \frac{K(m,n,Nc)}{Nc}\calJ_k\left(\frac{\sqrt{mn}}{Nc}\right),
\end{gather}
where $C_k$ is some constant depending on the weight $k$, $\calJ_k$ is a suitable test function\footnote{In the cases considered, it is essentially a Bessel function.} so that the sum converges absolutely, and $K(m,n,c)$ denotes the Kloosterman sum
\begin{gather}\label{eqKloosterman}
K(m,n,c)=\sum_{d\,(c)^*}\exp\left(2\pi i\frac{m\overline d+nd}{c}\right)
\end{gather}
where the sum runs over all $d$ modulo $c$ with $\gcd(c,d)=1$ and $d\overline d\equiv 1\pmod c$. Kloosterman sums satisfy the so-called \emph{Selberg identity},
\begin{gather}\label{eqSelberg}
K(m,n,c)=\sum_{d\mid\gcd(m,n,c)} dK(1,mn/d^2,c/d).
\end{gather}
This identity was first noted without proof by Selberg \cite{Selberg}. The first published proof was found by Kuznetsov \cite{Kuznetsov} using his famous summation formula, and an elementary proof was found by Matthes \cite{Matthes}.
Using this, we can show the following general result.
\begin{proposition}\label{lemHeckePoincare}
Let $k\leq 0$, and $N,\nu\in\N$. Further denote the $\nu$th Maa{\ss}-Poincar\'e series of weight $k$ and level $N$ normalized so that its principal part at $\infty$ is given by $q^{-\nu}+O(1)$ by $P^{(N,k)}_\nu(\tau)$. Then we have that
\begin{gather}\label{eqHeckePoincare}
P^{(N,k)}_\nu=\sum_{d\mid\gcd(N,\nu)} (\nu/d)^{1-k}P^{(N/d,k)}_1|T_{\nu/d}^{(N/d)}|B_d.
\end{gather}
\end{proposition}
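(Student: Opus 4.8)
The strategy is to show that the two sides of \eqref{eqHeckePoincare} are harmonic Maaß forms of weight $k$ for $\Gamma_0(N)$ with the same Fourier expansion at the cusp $\infty$; since a harmonic Maaß form is real-analytic (being annihilated by the elliptic operator $\Delta_k$) and $\HH$ is connected, two such forms that agree on a neighbourhood of $\infty$ must coincide. That the right-hand side lies in $H_k(N)$ is routine: each $P_1^{(N/d,k)}$ lies in $H_k(N/d)$ by construction, $T_{\nu/d}^{(N/d)}$ preserves $H_k(N/d)$, and since $N\mid c$ for every lower-left entry $c$ of a matrix in $\Gamma_0(N)$ one has $B_d\Gamma_0(N)B_d^{-1}\subseteq\Gamma_0(N/d)$, so $|B_d$ maps $H_k(N/d)$ into $H_k(N)$. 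Finally, both $T_m$ and $B_m$ act on $q$-expansions by the familiar formulas and therefore respect the splitting $f=f^++f^-$ of a harmonic Maaß form into its holomorphic and non-holomorphic parts, so it is enough to match these two parts separately.

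For the holomorphic parts I would compare $n$-th Fourier coefficients at $\infty$. The polar terms ($n<0$) are easy: the left-hand side has principal part $q^{-\nu}$ by the chosen normalization, while $P_1^{(N/d,k)}|T_{\nu/d}^{(N/d)}$ has a principal part at $\infty$ precisely when $\gcd(\nu/d,N/d)=1$, in which case that principal part is $(\nu/d)^{k-1}q^{-\nu/d}$ (read off from the action of $T_{\nu/d}$ on $q$-expansions); as $\gcd(\nu/d,N/d)=\gcd(N,\nu)/d$, this occurs for the single divisor $d=\gcd(N,\nu)$, and then $B_d$ sends $q^{-\nu/d}$ to $q^{-\nu}$ while the prefactor $(\nu/d)^{1-k}$ cancels $(\nu/d)^{k-1}$, so the right-hand side likewise has principal part exactly $q^{-\nu}$. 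For $n>0$ the left-hand coefficient is given by the Kloosterman--Bessel expansion \eqref{eqPoincarecoeff}, and the right-hand coefficient is the corresponding Hecke-and-$B_d$ combination of the \eqref{eqPoincarecoeff}-coefficients of the $P_1^{(N/d,k)}$ at level $N/d$. Expanding $K(\nu,n,Nc)$ on the left by the Selberg identity \eqref{eqSelberg}, writing out $T_{\nu/d}^{(N/d)}$ and then $B_d$ on the right, and re-indexing the Kloosterman sums, the identity reduces to the purely arithmetic statement
\[
\sum_{e\mid\gcd(\nu,n,Nc)} e\,K\!\left(1,\tfrac{\nu n}{e^{2}},\tfrac{Nc}{e}\right)=\sum_{d\mid\gcd(N,\nu)}\ \sum_{\substack{e'\mid\gcd(\nu/d,\,n/d)\\ \gcd(e',N/d)=1,\ e'\mid c}} (d\,e')\,K\!\left(1,\tfrac{\nu n}{(de')^{2}},\tfrac{Nc}{de'}\right),
\]
which follows from the bijection $e\leftrightarrow(d,e')$ in which $d=\prod_{p\mid N}p^{\min(v_p(e),v_p(N))}$ is the part of $e$ supported on the primes dividing $N$ (with $v_p$ the $p$-adic valuation) and $e'=e/d$; one checks prime by prime that this is a bijection onto the pairs occurring on the right, under which the summands on the two sides agree, and a short accounting of the powers of $\nu$ and $d$ shows that $(\nu/d)^{1-k}$ is exactly the normalizing factor making the coefficient identity come out.

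The non-holomorphic parts are handled in the same way, since the coefficients of $(P_\nu^{(N,k)})^-$ are again of the shape \eqref{eqPoincarecoeff} --- with the same Kloosterman sums $K(\nu,n,Nc)$ but a different Bessel-type radial factor --- so the identical Selberg identity and the identical bijection apply; alternatively, applying $\xi_k$ from \Cref{propxi} and using its compatibility with slash operators, Hecke operators, and the $B_m$ (explicitly, $\xi_k(f|B_m)=m^{1-k}(\xi_kf)|B_m$ and $\xi_k$ intertwines the weight-$k$ and weight-$(2-k)$ Hecke operators up to a power of $m$) reduces the non-holomorphic identity to the analogue of \eqref{eqHeckePoincare} for cuspidal Poincaré series of weight $2-k\geq 2$, which is cleaner because a cusp form is determined by its positive Fourier coefficients. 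Putting all this together, the difference of the two sides of \eqref{eqHeckePoincare} is a harmonic Maaß form whose Fourier expansion at $\infty$ is at most a constant; by real-analyticity it equals that constant on all of $\HH$, and a nonzero constant cannot transform with weight $k<0$, so for $k<0$ the difference vanishes, while for $k=0$ one observes in addition that both sides carry the same constant term at $\infty$ (equivalently, one checks the $n=0$ coefficient by the same Kloosterman-sum identity). The one genuinely delicate step is the elementary but fiddly prime-by-prime bookkeeping behind the displayed identity, and in particular the verification that the exponent in the factor $(\nu/d)^{1-k}$ comes out exactly right; everything else is an assembly of standard facts about Poincaré series, Hecke operators, the operators $B_d$ of \Cref{propAL}, and the $\xi$-operator.
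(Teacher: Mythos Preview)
Your proposal is correct and follows essentially the same approach as the paper: both arguments match Fourier coefficients at $\infty$ by expanding the Poincar\'e coefficients \eqref{eqPoincarecoeff}, applying the Selberg identity \eqref{eqSelberg}, and invoking the same bijection between divisors $e\mid\gcd(Nc,\nu,n)$ and pairs $(d,e')$ with $d\mid\gcd(N,\nu,n)$, $e'\mid\gcd(\nu/d,n/d)$, $\gcd(e',N/d)=1$ (your prime-by-prime description via $v_p(d)=\min(v_p(e),v_p(N))$ is exactly this factorization). The only cosmetic difference is that you treat the non-holomorphic part explicitly, whereas the paper appeals to the fact that a harmonic Maa{\ss} form is determined by its holomorphic part.
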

\begin{proof}
The action of Hecke operators in level $N$ may be solely defined on the space of one-periodic holomorphic functions together with a weight $k$ slash action of $\SL_2(\R)$ via their actions on Fourier expansions: For a function $f(\tau)=\sum_{n\in\Z} a(n)q^n$, we have (see for instance \cite[Proposition 10.2.5]{CS}) $(f|T_m^{(N)})=\sum_{n\in\Z} b(n)q^n$ where
\begin{gather}\label{eqHeckeFourier}
b(n)=\sum_{\substack{d\mid\gcd(m,n) \\ \gcd(d,N)=1}} d^{k-1}a(mn/d^2).
\end{gather}
Since a harmonic Maa{\ss} form is uniquely determined by its holomorphic part, we restrict our attention to the holomorphic part of the right-hand side of \eqref{eqHeckePoincare}. Using \eqref{eqHeckeFourier}, this is given by
$$\sum_{d\mid\gcd(N,\nu)} (\nu/d)^{1-k}\sum_{n\in\Z} \left[\sum_{\substack{t\mid\gcd(\nu/d,n) \\ \gcd(N/d,t)=1}} t^{k-1}a^{(N/d,k)}_1\left(\frac{n\nu/d}{t^2}\right)\right]q^{dn}.$$
The principal part of the holomorphic part is easily seen to equal $q^{-\nu}$ as desired, since the only term then surviving in the sum is the one with $d=\gcd(N,\nu)$ and $t=\nu/d$. The $n$th coefficient for $n>0$ is given by
$$\nu^{1-k}\sum_{d\mid\gcd(N,\nu,n)} \left[\sum_{\substack{t\mid\gcd(\nu/d,n/d) \\ \gcd(N/d,t)=1}} (dt)^{k-1}a^{(N/d,k)}_1\left(\frac{n\nu}{(dt)^2}\right)\right].$$
Plugging in the definition of the coefficient $a^{(N/d,k)}_1\left(\frac{n\nu}{(dt)^2}\right)$ then yields
\begin{gather*}
\begin{aligned}
 &C_k\nu^{1-k}\sum_{d\mid\gcd(N,\nu,n)}\sum_{\substack{t\mid\gcd(\nu/d,n/d) \\ \gcd(N/d,t)=1}} (\nu n)^{(k-1)/2}\sum_{c=1}^\infty \frac{K\left(1,\frac{\nu n}{(dt)^2},\frac Nd c\right)}{\frac Nd c}\calJ_k\left(\frac{\sqrt{\frac{\nu n}{(dt)^2}}}{\frac Nd c}\right)\\
=& C_k(n/\nu)^{(k-1)/2}\sum_{d\mid\gcd(N,\nu,n)}\sum_{\substack{t\mid\gcd(\nu/d,n/d) \\ \gcd(N/d,t)=1}} \sum_{c=1}^\infty (dt)\frac{K\left(1,\frac{\nu n}{(dt)^2},\frac {N(ct)}{dt}\right)}{N(ct)}\calJ_k\left(\frac{\sqrt{\nu n}}{N (ct)}\right).
 \end{aligned}
 \end{gather*}
We may and do assume without loss of generality that the defining series for the coefficient $a_1^{N,k}$ is absolutely convergent, since in the cases of interest to us, this can be achieved essentially by a Hecke-trick-like argument and analytic continuation (see for instance \cite[Chapter 1]{BruinierHabil} for details). Noticing further that every common divisor of $Nc,\nu,n$ can be uniquely factored into a common divisor $d$ of $N,\nu,n$ and a common divisor $t$ of $c,\nu/d,n/d$ which is coprime to $N/d$, we may rearrange the above sum to obtain the expression
$$C_k(n/\nu)^{(k-1)/2}\sum_{c=1}^\infty  \left[\sum_{d\mid\gcd(Nc,\nu,n)}dK\left(1,\frac{\nu n}{d^2},\frac {Nc}{d}\right)\right]\frac{\calJ_k\left(\frac{\sqrt{\nu n}}{N c}\right)}{Nc}.$$
By the Selberg identity \eqref{eqSelberg} we see by comparing to \eqref{eqPoincarecoeff} that this is exactly the coefficient $a_\nu^{(N,k)}(n)$. The constant terms may be compared through a similar but easier argument, which we refrain from carrying out here, therefore completing the proof.
\end{proof}
For levels $N\in\{1,...,10,12,13,16,18,25\}$, where the modular surface $X_0(N)$ has genus $0$, and weight $k=0$, the above result has been shown using ad hoc methods in \cite[Lemma 2.11]{BeneishLarson}, the result for general levels in weight $0$ is stated for $\gcd(N,\nu)=1$ and (incorrectly) for $\nu\mid N$ in \cite[Theorem 1.1]{BKLOR} (see also \cite{BKLOR2}). In \cite[Theorem 1.1]{Kangetal}, Jeon, Kang, and Kim give a slightly different proof of \Cref{lemHeckePoincare} for the case of weight $0$, which is not an important restriction, and use it to prove congruences for the Fourier coefficients of modular functions for genus $1$ levels (see loc. cit., Theorem 1.6). The analogous statement for cuspidal Poincar\'e series may also be obtained from the Petersson coefficient formula together with the fact that Hecke operators are Hermitian with respect to the Petersson inner product on the space of cusp forms, see \cite[Proposition 10.3.19]{CS} for the case $N=1$, but which is easily generalized to higher levels. 

\begin{proof}[Proof of \Cref{thmspan}]
According to \Cref{corpoincare}, the Weierstrass mock modular form is (up to a possible additive constant which is of no importance here) the first Maa{\ss}-Poincar\'e series in the respective level with its only pole at the cusp $\infty$. \Cref{lemHeckePoincare} shows how to obtain harmonic Maa{\ss} forms with arbitrarily high pole orders at infinity by the application of Hecke operators, provided that we are able to express the lower-level Poincar\'e series needed in \eqref{eqHeckePoincare} in terms of functions of the form $\Zhat_E|W_Q|T_m|B_d$, as required in the theorem. 

We therefore need to show first that we can generate the Poincar\'e series $P_1^{(d,0)}$ for all divisors $d\mid N$ for the relevant levels $N\in\{11,14,15,17,19,21\}$. By \Cref{propAL} $(vi)$, we know that $pf|U_p+f|W_p$ transforms under $\Gamma_0(N/p)$ for any function $f$ transforming like a modular function for $\Gamma_0(N)$. Applying this to the function $P_1^{(N,0)}|W_p$ shows that $P_1^{N,0}+p  P_1^{N,0}|W_p|U_p$ is a harmonic Maa{\ss} form on $\Gamma_0(N/p)$. Viewed as a harmonic Maa{\ss} form for $\Gamma_0(N)$, this function has a principal part $q^{-1}+O(1)$ at $\infty$ and by applying the Atkin-Lehner operator $W_p=\gamma\cdot \left(\begin{smallmatrix}\sqrt p & 0 \\ 0 & 1/\sqrt{p}\end{smallmatrix}\right)$ (cf. \Cref{propAL} $(i)$) to it once more, we see that it has principal part ${(q^{1/p})}^p+O(1)$ at the cusp $N/p$ of $\Gamma_0(N)$. The same is true for the Poincar\'e series $P_1^{N/p,0}$ viewed as a harmonic Maa{\ss} form for $\Gamma_0(N)$, so the two functions can only differ by a constant by \Cref{corBF}. Since $N$ is square-free by assumption, we can repeat this process to obtain all Poincar\'e series $P_1^{d,0}$ in this fashion, keeping in mind the commutation rules for Hecke and Atkin-Lehner operators in \Cref{propAL}.

\medskip 

Since the Atkin-Lehner operators act transitively on the cusps of $\Gamma_0(N)$, we have shown now that any harmonic Maa{\ss} form of level $N$ for the given $N$ can be written as a linear combination of functions of the form $P_1^{(N,0)}|W_Q|T_m|B_d|W_{Q'}$ for $Q,Q'\mid N$, $m\in\N_0$, where we set $f|T_0:=1$ for any function for convenience, and $d\mid N$. In order to complete the proof, we need to show that the application of $W_{Q'}$ may be avoided. The harmonic Maa{\ss} form in $H_0(N)$ which has a pole of order $m$ at the cusp $W_p\infty$ for a prime $p\mid N$ and nowhere else is given by
$$P_m^{(N,0)}|W_p=\sum_{d\mid\gcd(N,m)} (m/d)P^{(N/d,0)}_1|T_{m/d}^{(N/d)}|B_d|W_p$$
by \Cref{lemHeckePoincare}. For a common divisor $d$ of $m$ and $N$ not divisible by $p$ the operators  $B_d$ and $W_p$ commute (see e.g. \cite[Proposition 13.2.6 (d)]{CS}). Furthermore, we can write $T_{m/d}^{(N/d)}=U_{p^\ell}T_{m'/d}^{(N/d)}$ with $m=p^\ell m'$ and $p\nmid m'$. The Atkin-Lehner operator $W_p$ commutes with the Hecke operator $T_{m'/d}^{(N/d)}$ and we have for any function $f$ which is invariant under $\Gamma_0(N/d)$ and any integer $r>0$ that
\begin{gather}\label{eqUWrec}
f|U_{p^r}|W_p=f|U_{p^r}|B_p+\frac 1pf|U_{p^{r-1}}|W_p|B_p-\frac 1p f|U_{p^{r-1}},
\end{gather}
which is an immediate consequence of \Cref{propAL} $(i)$ and $(vi)$. By induction on $r$ this shows that we may write $P_1^{(N/d,0)}|T_{m/d}^{(N/d)}|W_p$ as a linear combination of functions of the form $P_1^{(N/d,0)}|W_p^\eps|T_{\tilde{m}}^{(N/d)}|B_d$ for $\eps\in\{0,1\}$ and suitable $\tilde{m}\in\N_0$. Since $P_1^{(N/d,0)}$ may be written as a linear combination of functions of the form $P_1^{(N,0)}|W_Q|T_Q$, the same argument as just used shows that also $P_1^{(N/d,0)}|T_{m/d}^{(N/d)}|W_p$ can be written in the form claimed in the theorem.

For a common divisor $d$ of $m$ and $N$ which is divisible by $p$, we note that we can write $W_p=\left(\frac{1}{\sqrt p}\left(\begin{smallmatrix} p & 0 \\ 0 & 1\end{smallmatrix}\right)\right)^{-1}\gamma$ for a suitable $\gamma\in\Gamma_0(N/p)$. Therefore we have that
$$P_1^{(N/d,0)}|T_{m/d}^{(N/d)}|B_d|W_p=P_1^{(N/d,0)}|T_{m/d}^{(N/d)}|B_{d/p}|\gamma=P_1^{(N/d,0)}|T_{m/d}^{(N/d)}|B_{d/p}$$
since $P_1^{(N/d,0)}|T_{m/d}^{(N/d)}|B_{d/p}\in H_0(\Gamma_0(N/p))$, so that also these summands are of the desired form. In summary, this proves the theorem for all square-free levels $N$, since the Atkin-Lehner involutions $W_p$, $p\mid N$ generate the group of all Atkin-Lehner involutions, which act transitively on the cusps.

\end{proof}

\begin{remark*}
It is essential in the above proof of \Cref{thmspan} that the Atkin-Lehner involutions $W_Q$ commute with all Hecke operators $T_m$ with $\gcd(Q,m)=1$. 
One could try to extend the the above proof to levels where the full normalizer $\Norm(\Gamma_0(N))$ acts transitively on the cusps (which is true in addition for levels $20,\ 24,\ 27,\ 32,\ 36$, but not $49$, among those where $X_0(N)$ has genus $1$). For levels $20$ and $24$ it is even true that the normalizer commutes with all Hecke operators $T_m$ with $\gcd(m,N)=1$. Unfortunately, the behaviour with respect to the $U$-operator (see \eqref{eqUWrec}) is not such that the above proof immediately generalizes. We point out however that the first part of the proof that one can express all the Poincar\'e series $P_1^{(d,k)}$ for $d\mid N$ using only elements in the normalizer and Hecke operators together with $B$-operators, works for levels $20$ and $24$.
\end{remark*}
\subsection{Proof of the dimension formulas}
We now proceed to the proof of \Cref{thmdim} and indicate the extension for composite levels.
\begin{proof}[Proof of \Cref{thmdim}]
Let $N=p$ be a prime number such that $E:=X_0(p)$ has genus $1$, i.e. $p\in\{11,17,19\}$. From \Cref{proptrans} we know that $\ch_{V^G}$ is a modular function for $\Gamma_0(p)$ without poles in $\HH$ and furthermore the transformation behaviour
\begin{gather}\label{eqchS}
\ch_{V^G}(S.\tau)=\frac{1}{p}q^{-1}\sum_{n=0} \dim V^G_{n} q^n+\frac 1pq^{-1}\sum_{i=1}^{p-1}\sum_{n=0}^\infty \dim V(\sigma^i)_{n/p}q^{n/p}.
\end{gather}
Using this together with \Cref{thmspan}, we can express $\ch_{V^G}$ solely in terms of Weierstrass mock modular forms, the Fricke involution $W_p$, and Hecke operators, more precisely we find, using that $\dim V_0=1$,
\begin{multline}
\ch_{V^G}=\Zhat_E+\frac 1p \sum_{i=1}^{p-1} \sum_{j=1}^{p-1} (p-j)\dim V(g^i)_{j/p}\Zhat_E|W_p|T_{p-j}\\
+\frac 1p\left((p\Zhat_E|U_p+\Zhat_E|W_p)|B_p+p\Zhat_E|W_p|U_p\right)+(\dim V_1^G-C),
\end{multline}
with 
\begin{multline}
C=\dim V_1^G-\left(c_E(0)+\frac {c_{E,W_p}(0)}p\sum_{i=1}^{p-1} \sum_{j=1}^{p-1}\sigma_1(p-j)\dim V(g^i)_{j/p}\right.\\
+\left.\frac{1}{p}\left[pc_E(0)+(p+1)c_{E,W_p}(0)\right]\right),
\end{multline}
where $c_E(0)=-a_E(2)/2$ and $c_{E,W_p}(0)=\widehat\zeta(\Lambda_E;L(E,1))$ denote the constant terms of $\Zhat_E$ and $\Zhat_E|W_p$ respectively, see \Cref{secWeierstrass}. Therefore we find, using the relation $S=W_pB_p^{-1}$ and the fact that both $p\Zhat_E|U_p+\Zhat_E|W_p$ and $\Zhat_E+p\Zhat_E|W_p|U_p$ transform like modular forms of level $1$ by \Cref{propAL} $(vi)$ and hence are invariant under $S$,
\begin{multline}
\ch_{V^G}|S=\Zhat_E|W_p|B_p^{-1}+\frac 1p \sum_{i=1}^{p-1} \sum_{j=1}^{p-1} (p-j)\dim V(g^i)_{j/p}\Zhat_E|T_{p-j}|B_p^{-1}\\
+\frac 1p\left(p\Zhat_E|U_p|B_p^{-1}+\Zhat_E|B_p^{-1}+p\Zhat_E|W_p|U_p|B_p^{-1}\right)+(\dim V_1^G-C).
\end{multline}
Now \Cref{propEMS} implies that
$$\ch_{V^{\orb(g^N)}}+\ch_{V^{\orb(g)}}=F_0+F_\infty,$$
for $F_\fraka$ as in \eqref{eqFfraka}. By definition we have $F_\infty =\ch_{V^G}$, so its constant term equals $\dim V_1^G$. The constant term of $F_0$ equals $p$ times that of $\ch_{V^G}|S$, since we can write $F_0=\sum_{j=0}^{p-1} \ch_{V^G}|(ST^j)$ and all summands have the same constant term. Therefore by comparing constant terms we obtain after some simplification
\begin{multline}
\dim V_1+\dim V_1^{\orb(g)}=(p+1)\dim V_1^G-(p-1)\left(c_E(0)-c_{E,W_p}(0)\right)\\
+\left(c_E(0)-c_{E,W_p}(0)\right)\sum_{i=1}^{p-1}\sum_{j=1}^{p-1} \sigma(p-j)\dim V(g^i)_{j/p}.
\end{multline}
Plugging in the definitions of $c_E(0)$ and $c_{E,W_p}(0)$, we arrive at the dimension formula stated in \Cref{thmdim}
\end{proof}
We now proceed to the proof of \Cref{thmdim2}.
\begin{proof}[Proof of \Cref{thmdim2}]
Since $\ch_{V^G}$ is a modular function, its image under the $\xi$-operator (see \Cref{propxi}) must be $0$. Hence, applying the Bruinier-Funke pairing with the newform $f$ must yield zero as well. Since we have $\ch_{V^G}(\tau)=q^{-1}+O(1)$ and the expansion at the cusp $0$ as given in \eqref{eqchS} together with $(f_E|W_p)(\tau)=\eps f(\tau)$ with $\eps\in\{\pm 1\}$, we can apply \Cref{propBF} and obtain
$$a(1)+\eps\left(\frac{a(p)}{p}+\frac 1p\sum_{i=1}^{p-1}\sum_{j=1}^{p-1} a(p-j)V(\sigma^i)_{j/p}\right)=0.$$
Since $a(1)=1$, the formula claimed in \Cref{thmdim2} follows.
\end{proof}

To conclude, we formulate the dimension formula for the square-free composite levels $N$ such that the modular curve $X_0(N)$ has genus $1$, i.e. $N\in\{14,15,21\}$. In what follows, we always write $N=p_1p_2$ where $p_1$ and $p_2$ are primes. In view of \Cref{remgeneralize}, we point out that it is not essential that $N$ has exactly two prime factors and in principle, the method of proof would go through for arbitrary square-free numbers $N$. However, for the sake of simplicity of the exposition and since all the cases under consideration here are of this form, we restrict to the situation of precisely two distinct prime factors. Before stating the result, we record the following lemma about expansions at other cusps, which is essentially the same as \cite[Corollary 2.7]{BeneishLarson} and also follows easily from \eqref{eqUWrec}.
\begin{lemma}\label{lemexpansion}
Let $N$ be any square-free number and let $f\in H_0(N)$ be a harmonic Maa{\ss} form satisfying $f(\tau)=q^{-\nu}+c+O(\exp(-\alpha y))$ and $(f|W_Q)(\tau)=c_Q+O(\exp(-\alpha y))$ for all (exact) divisors $Q\mid N$, where $\alpha>0$ and $c,c_Q\in\C$. Then if $p\mid N$ is a prime wit $p\nmid \nu$ and $a\in\Z_{>0}$, then the function $-p^{a+1}f|U_{p^{a+1}}|W_p$ has only a pole at the cusp $\infty$ and we have the expansion
$$-p^{a+1}(f|U_{p^{a+1}}|W_p)(\tau)=q^{p^a\nu}-(p-1)\sigma(p^a)c-c_p+O(\exp(-\alpha y)).$$
\end{lemma}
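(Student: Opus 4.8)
The plan is to derive everything from the single identity \eqref{eqUWrec}. The first step is to record a closed form for $-p^{a+1}f|U_{p^{a+1}}|W_p$. Applying \eqref{eqUWrec} repeatedly — each time to the unique remaining summand carrying a trailing factor $W_p$, then multiplying through by the appropriate power of $p$ and the appropriate $B$-operator — and telescoping, one obtains, for any function $\phi$ transforming like a weight-$0$ modular function for $\Gamma_0(N)$, an identity expressing $-p^{a+1}\phi|U_{p^{a+1}}|W_p$ as a finite $\C$-linear combination of the single term $\phi|W_p|B_{p^{a+1}}$, the single term $\phi|B_{p^{a}}$, and terms $\phi|U_{p^{j}}|B_{p^{i}}$ with $1\le j\le a+1$ and $i\ge 0$. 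What matters for us is that $\phi$ enters this combination without any $U$- or $W_p$-operator only through the term $\phi|B_{p^{a}}$, which has coefficient $1$; that the coefficient of $\phi|W_p|B_{p^{a+1}}$ is $-1$; and that the scalar coefficients of all the $\phi|U_{p^{j}}|B_{p^{i}}$ terms add up to $-p^{a+1}$ (after the cancellations in the telescoping).

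Next I would read off the expansion at $\infty$ by taking $\phi=f$. The operator $B_{p^{i}}$ acts on $q$-expansions at $\infty$ via $q\mapsto q^{p^{i}}$, hence multiplies pole orders at $\infty$ by $p^{i}$ and leaves constant terms unchanged, while $U_{p^{j}}$ preserves holomorphy at $\infty$ and constant terms there (each of the $p^{j}$ translates making up $f|U_{p^{j}}$ tends to the constant term $c$ of $f$ as $y\to\infty$). By hypothesis the only polar term of $f$ at $\infty$ is $q^{-\nu}$, and since $p\nmid\nu$ no $f|U_{p^{j}}$ with $j\ge 1$ has a pole at $\infty$; thus the only source of a pole in the closed form is the term $f|B_{p^{a}}$, contributing $q^{-p^{a}\nu}$ with coefficient $1$ (in particular a pole of order $p^{a}\nu$ at $\infty$). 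Collecting constant terms, the $f|U_{p^{j}}|B_{p^{i}}$ terms contribute $-p^{a+1}c$ in total, the term $f|B_{p^{a}}$ contributes $+c$, and $-f|W_p|B_{p^{a+1}}$ contributes $-c_p$ because $f|W_p=c_p+O(\exp(-\alpha y))$ by assumption; in all this gives $(1-p^{a+1})c-c_p=-(p-1)\sigma(p^{a})c-c_p$, using $\sigma(p^{a})=(p^{a+1}-1)/(p-1)$. This is the asserted expansion at $\infty$.

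It remains to show that $g:=f|U_{p^{a+1}}|W_p$ is holomorphic at every other cusp. For square-free $N$ the cusps of $\Gamma_0(N)$ are parametrized by the divisors $Q\mid N$, with $W_Q$ sending $\infty$ to the cusp labelled $Q$, so I must check that $g|W_Q$ is holomorphic at $\infty$ for every $Q\mid N$ with $Q\neq 1$. Using $W_p^{2}\in\Gamma_0(N)$, the relation $W_pW_Q=W_{pQ}$ for coprime exact divisors, and the commutation of $W_Q$ with $U_{p^{a+1}}=T_{p^{a+1}}$ (valid since $p\mid N$) when $\gcd(Q,p)=1$ — all from \Cref{propAL} — this reduces to three cases. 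If $Q=p$, then $g|W_p=f|U_{p^{a+1}}$, holomorphic at $\infty$ by the previous paragraph. If $p\mid Q$ and $S:=Q/p>1$, then $g|W_Q=(g|W_p)|W_S=f|U_{p^{a+1}}|W_S=f|W_S|U_{p^{a+1}}$, holomorphic at $\infty$ since $f|W_S=c_S+O(\exp(-\alpha y))$. If $p\nmid Q$ and $Q>1$, then $g|W_Q=(f|W_Q)|U_{p^{a+1}}|W_p$, and I would apply the closed form of the first step with $\phi=f|W_Q$: because $f|W_Q$ has its only pole at the cusp $Q\neq 1,p$, both $f|W_Q$ and $f|W_Q|W_p=f|W_{pQ}$ are holomorphic at $\infty$, so every term occurring in the closed form ($\phi|B_{p^{a}}$, the terms $\phi|U_{p^{j}}|B_{p^{i}}$, and $\phi|W_p|B_{p^{a+1}}$) is holomorphic at $\infty$, hence so is $g|W_Q$. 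Together with the second paragraph this establishes the lemma (possibly after shrinking $\alpha$, since $U_p$ degrades exponential decay at $\infty$ by a bounded factor).

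The main obstacle is the first step: carrying out the iteration of \eqref{eqUWrec} and the telescoping carefully enough to identify exactly which terms survive, and verifying that the scalar coefficients collapse to give the total $-p^{a+1}$ (equivalently the constant $-(p-1)\sigma(p^{a})$). Once that closed form is in hand, the remainder is routine bookkeeping with the elementary properties of $U_{p}$, $B_{p}$ and the Atkin–Lehner involutions recorded in \Cref{propAL}, together with the divisor–cusp dictionary for square-free level.
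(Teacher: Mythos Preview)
Your argument is correct and is exactly the route the paper indicates: the paper does not actually give a proof of this lemma, only the remark that it ``is essentially the same as \cite[Corollary 2.7]{BeneishLarson} and also follows easily from \eqref{eqUWrec}'', and you have carried out precisely that iteration of \eqref{eqUWrec} together with the Atkin--Lehner/Hecke commutation rules from \Cref{propAL}. The telescoped closed form you describe is
\[
-p^{a+1}\phi|U_{p^{a+1}}|W_p=\sum_{r=0}^{a}p^{r}\phi|U_{p^{r}}|B_{p^{a-r}}-\sum_{r=1}^{a+1}p^{r}\phi|U_{p^{r}}|B_{p^{a+2-r}}-\phi|W_p|B_{p^{a+1}},
\]
from which your coefficient claims (the $r=0$ term $\phi|B_{p^{a}}$ with coefficient $1$, the sum $-p^{a+1}$ over the $U$-terms with $r\ge 1$, and the $-1$ on $\phi|W_p|B_{p^{a+1}}$) are immediate, and the constant-term and cusp computations go through as you wrote them.
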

We now state the dimension formula for composite $N$.
\begin{theorem}\label{thmdimcomp}
With the notation and hypotheses as in \Cref{thmdim}, only that the group $G=\langle g\rangle$ has order $N=p_1p_2$, we have the following dimension formula:
\begin{gather*}
\begin{aligned}
 &\dim V^{\orb(g)}_1+\dim V^{\orb(g^{p_1})}_1 +\dim V^{\orb(g^{p_2})}_1+\dim V_1\\
&=[\SLZ:\Gamma_0(N)]\dim V^G_1+Nc_{E,N}+p_1c_{E,p_1}+p_2c_{E,p_2}\\
+&\sum_{m=0}^{N-1}\sum_{\substack{i,j\in\Z/N\Z \\ ij\equiv m\:(N)}} \sigma((N-m)')\dim W^{(i,j)}_{m/N}\left[c_{E,N}(0)+p_2^{b_m+1}c_{E,p_1}(0)\right.\\
&\qquad\quad\left.+(p_1^{a_m+1}+p^{a_m})c_{E,p_2}(0)-(p_1^{a_m+1}+p_1^{a_m}+p_2^{b_m+1}+p_2^{b_m})c_E(0)\right]\\
+&\sum_{m=0}^{p_2-1}\sum_{\substack{i,j\in\Z/p_{2}\Z \\ p_{1}ij\equiv m\:(p_2)}} \sigma((N-p_1m)')\dim W^{(p_1 i,p_1 j)}_{m/p_2}\left[\frac{p_1}{p_2}c_N+p_1\left(p_2^{b_{p_1m}+1}+p_2^{b_{p_1m}}-\frac{1}{p_2}\right)c_{E,p_1}(0)\right.\\
&\qquad\quad\left.-\left(p_1^{a_{p_1m}+2}+p_1^{a_{p_1m}+1}+\frac{p_1}{p_2}\right)c_{E,p_2}(0)+\left((p_1+1)(p_1^{a_{p_1m}+1}-p_2^{b_{p_1m}+1})+\frac{p_1}{p_2}\right)c_{E}(0)\right]\\
+&\sum_{m=0}^{p_1-1}\sum_{\substack{i,j\in\Z/p_{1}\Z \\ p_{2}ij\equiv m\:(p_1)}} \sigma((N-p_2m)')\dim W^{(p_2 i,p_2 j)}_{m/p_1}\left[\frac{p_2}{p_1}c_N-p_2\left(p_2^{b_{p_2m}+1}+p_2^{b_{p_2m}}-\frac{1}{p_1}\right)c_{E,p_1}(0)\right.\\
&\qquad\quad\left.+p_2\left(p_1^{a_{p_2m}+1}+p_1^{a_{p_2m}}-\frac{1}{p_1}\right)c_{E,p_2}(0)\right.\\
&\qquad\qquad\qquad\qquad\qquad\qquad\left.+\left((p_2+1)p_2^{b_{p_2m}+1}-p_2(p_1^{a_{p_2m}+1}+p_1^{a_{p_2m}})+\frac{p_2}{p_1}\right)c_{E}(0)\right].
\end{aligned}
\end{gather*}
where $c_E(0)=-a_E(2)/2$ denotes the constant term at infinity of $\Zhat_E$ and $c_{E,Q}(0)$ denotes the constant term of $\Zhat_E|W_Q$.
\end{theorem}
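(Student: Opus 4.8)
The plan is to run the proof of \Cref{thmdim} simultaneously at all four cusps $\infty,0,1/p_1,1/p_2$ of $\Gamma_0(N)$, with the three Atkin--Lehner involutions $W_{p_1},W_{p_2},W_N$ taking over the role of the single Fricke involution used there. Since $N=p_1p_2$ is square-free, $\gcd(d,N/d)=1$ for every $d\mid N$, so \Cref{propEMS} reads $\sum_\fraka F_\fraka=\ch_{V^{\orb(g)}}+\ch_{V^{\orb(g^{p_1})}}+\ch_{V^{\orb(g^{p_2})}}+\ch_{V^{\orb(g^N)}}$, and $V^{\orb(g^N)}=V$ because $g^N$ is the identity; comparing constant terms therefore produces the left-hand side of the asserted identity. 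As in the proof of \Cref{thmdim}, the constant term of $F_\fraka$ (see \eqref{eqFfraka}) equals $h_\fraka$ times the $q^0$-coefficient of $\ch_{V^G}$ in its expansion at $\fraka$, where $h_\fraka$ is the width of $\fraka$: indeed $F_\fraka$ is the sum of the $h_\fraka$ translates $\ch_{V^G}|(\gamma T^j)$, $j=0,\dots,h_\fraka-1$, all of which share the same $q^0$-coefficient. The widths are $1,N,p_2,p_1$ at $\infty,0,1/p_1,1/p_2$ and sum to $[\SLZ:\Gamma_0(N)]=(p_1+1)(p_2+1)$, which is the leading prefactor in the statement. So everything reduces to computing the constant term of $\ch_{V^G}$ at each of the four cusps.

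First I would pin down the principal parts of $\ch_{V^G}$. At $\infty$ it equals $q^{-1}$ since $\dim V_0=1$, and its constant term there is $\dim V^G_1$. At the cusp of denominator $c\in\{1,p_1,p_2\}$, \Cref{proptrans2} expands $\ch_{V^G}$ as an explicit finite $\C$-linear combination of the twisted characters $\ch_{W^{(ci,cj)}}$ with $i,j\in\Z/(N/c)\Z$ and coefficients $\frac{c}{N}e^{2\pi i dcij/N}$; since $g$ has type $N\{0\}$ the negative-exponent part is a finite sum of negative powers of $q^{1/(N/c)}$ whose coefficients are roots of unity times the dimensions $\dim W^{(ci,cj)}_{m/(N/c)}$, the roots of unity being irrelevant for constant terms. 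These principal parts are precisely the data appearing in the inner sums of the theorem.

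Next I would express $\ch_{V^G}$ in terms of $\Zhat_E$. By \Cref{corpoincare} and \Cref{propWZnormalizer}, $\Zhat_E$ is, up to an additive constant, the Maa\ss--Poincar\'e series with a single simple pole at $\infty$; its constant term at $\infty$ is $c_E(0)=-a_E(2)/2$ and its constant values at the other cusps are the $c_{E,Q}(0)$, $Q\mid N$. Following the proof of \Cref{thmspan} --- repeatedly using \Cref{propAL} and the recursion \eqref{eqUWrec} to move all Atkin--Lehner operators to the left of the Hecke operators --- one writes each normalized Poincar\'e series with a single simple-type pole $q^{-m/h_\fraka}+O(1)$ at a cusp $\fraka$ and holomorphic elsewhere as an explicit $\C$-linear combination of building blocks $\Zhat_E|W_Q|T_\mu|B_d$ with $Q,d\mid N$ and $\mu\in\N_0$. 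Feeding the principal parts from the previous step into a Poincar\'e-series decomposition of $\ch_{V^G}$ and invoking \Cref{corBF} to fix the remaining ambiguity gives $\ch_{V^G}$ as an explicit $\C$-linear combination of such building blocks plus a single constant $c_0$, the coefficients being the dimensions $\dim W^{(i,j)}_{m/\bullet}$ times the $\sigma$- and prime-power factors produced by those Poincar\'e-series identities, and $c_0$ determined by the constant term $\dim V^G_1$ at $\infty$. Finally, \Cref{lemexpansion} records how the constant term at each cusp transforms under each $U_p$, $W_p$ and $B_d$; iterating over the two primes $p_1,p_2$ yields closed-form constant terms for every $\Zhat_E|W_Q|T_\mu|B_d$ at every cusp in terms of $c_E(0)$ and the $c_{E,Q}(0)$, with the prefactors being exactly the $\sigma(p^a)$'s and powers of $p_1,p_2$ occurring in the statement (here $a_m=v_{p_1}(N-m)$, $b_m=v_{p_2}(N-m)$, and $(N-m)'$ is the part of $N-m$ coprime to $N$). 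Summing the four cusp contributions, weighted by the widths $1,N,p_2,p_1$, and collecting terms then yields the claimed formula.

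The main obstacle is purely organizational. The conceptual input is exhausted by \Cref{thmspan}, \Cref{lemHeckePoincare}, \Cref{lemexpansion}, and \Cref{propEMS}, but with $N=p_1p_2$ there are now three layers of Atkin--Lehner/Hecke interaction ($p_1$, $p_2$ and $N$) and four cusps to keep track of at once, so one must carefully bookkeep how the operators $U_p$, $W_p$, $B_p$ permute and rescale the constant-term data at the cusps, and then reconcile the resulting combination of $\sigma$-values and prime powers with the rather intricate expression in the statement. Essentially all the work lies in managing this case analysis rather than in any single analytic estimate.
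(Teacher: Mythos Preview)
Your proposal is correct and follows essentially the same route as the paper: compute the principal parts of $\ch_{V^G}$ at all four cusps via \Cref{proptrans2}, express $\ch_{V^G}$ as a combination of building blocks $\Zhat_E|W_Q|T_\mu|B_d$ using \Cref{thmspan}, \Cref{lemHeckePoincare} and \Cref{lemexpansion}, read off the constant terms at each cusp, and then compare constant terms in \Cref{propEMS} weighted by cusp widths. The paper carries this out with the same ingredients and the same acknowledgment that the remaining work is bookkeeping, omitting the explicit expansions at the non-infinite cusps ``for the sake of brevity.''
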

\begin{proof}
We fix our Atkin-Lehner operators as $W_{p_\ell}=\left(\begin{smallmatrix} * & * \\ p_\ell & p_{\ell\pm 1}\end{smallmatrix}\right)B_{p_\ell}$, where the first matrix, which we denote by $\gamma_{p_\ell}$, is in $\Gamma_0(p_\ell)$, and $W_N=S B_N$. By \Cref{propEMS} we obtain the following expansions of $\ch_{V^G}$ at the cusps of $\Gamma_0(N)$:
\begin{gather*}
\begin{aligned}
(\ch_{V^G}|S)(\tau)&=\frac 1N\sum_{m=0}^\infty \sum_{\substack{i,j\in\Z/N\Z \\ ij\equiv m\:(N)}} \dim W^{(i,j)}_{m/N} q^{m/N-1}\\
(\ch_{V^G}|\gamma_{p_\ell})(\tau)&=\frac{1}{p_{\ell\pm 1}}\sum_{m=0}^\infty\sum_{\substack{i,j\in\Z/p_{\ell\pm 1}\Z \\ p_{\ell}^2ij\equiv m\:(N)}} \dim W^{(p_\ell i,p_\ell j)}_{m/N} q^{m/N-1}
\end{aligned}
\end{gather*}
Note that in the formula for $\ch_{V^G}|\gamma_{p_\ell}$ only those summands survive where $m$ is divisible by $p_\ell$.

Using \Cref{corpoincare}, \Cref{lemHeckePoincare}, and \Cref{lemexpansion} we can express the unique (up to additive constants) harmonic Maa{\ss} form in $H_0(N)$ having a pole of order $m=p_1^ap_2^bm'$, $\gcd(N,m')=1$, at $\infty$ and nowhere else as
\begin{multline}
p_1p_2m(\Zhat_E|T_{m'}|U_{p_1^{a+1}}|W_{p_1}|U_{p_2^{b+1}}|W_{p_2})(\tau)\\
=q^{-m}+\sigma(m')\left((p_1^{a+1}-1)(p_2^{b+1}-1)c_E(0)+(p_2^{b+1}-1)c_{E,p_1}(0)\right.\\
\left.+(p_1^{a+1}-1)c_{E,p_2}(0)+c_{E,N}(0)\right)
+O(\exp(-\alpha y))
\end{multline}
for some suitable $\alpha>0$ and $c_E(0)$ and $c_{E,Q}(0)$ denoting the constant terms of $\Zhat_E$ and $\Zhat_E|W_Q$, respectively. For reference, we also give the following expansions which are needed in order to express $\ch_{V^G}$ in terms of Weierstrass mock modular forms,
\begin{gather}
p_1p_2m(\Zhat_E|T_{m'}|U_{p_1^{a+1}}|U_{p_2^{b+1}})(\tau)
=p_1^{a+1}p_2^{b+1}\sigma(m')c_E(0)
+O(\exp(-\alpha y)),
\end{gather}
\begin{multline}
p_1p_2m(\Zhat_E|T_{m'}|U_{p_1^{a+1}}|U_{p_2^{b+1}}|W_{p_2})(\tau)\\
=p_1^{a+1}\sigma(m')\left((p_2^{b+1}-1)c_E(0)+c_{E,p_2}(0)\right)
+O(\exp(-\alpha y)),
\end{multline}
and
\begin{multline}
p_1p_2m(\Zhat_E|T_{m'}|U_{p_1^{a+1}}|W_{p_1}|U_{p_2^{b+1}})(\tau)\\
=p_2^{b+1}\sigma(m')\left((p_1^{a+1}-1)c_E(0)+c_{E,p_1}(0)\right)
+O(\exp(-\alpha y)).
\end{multline}
Hence we may write
\begin{gather}\label{eqchVGcomp}
\begin{aligned}
\ch_{V^G}&=\Zhat_E+\frac 1N \sum_{m=0}^{N-1}\sum_{\substack{i,j\in\Z/N\Z \\ ij\equiv m\:(N)}} \dim W^{(i,j)}_{m/N}(-p_2^{b_m+1}(-p_1^{a_m+1}((N-m)'\Zhat_E|T_{(N-m)'})\\
&\qquad\qquad\qquad\qquad\qquad|U_{p_1^{a_m+1}}|W_{p_1})|U_{p_2^{b_m+1}}|W_{p_2})|W_N\\
&+\frac{1}{p_2}\sum_{m=0}^{p_2-1}\sum_{\substack{i,j\in\Z/p_{2}\Z \\ p_{1}ij\equiv m\:(p_2)}} \dim W^{(p_1 i,p_1 j)}_{m/p_2}(-p_2^{b_{p_1m}+1}(-p_1^{a_{p_1m}+1}((N-p_1m)'\Zhat_E|T_{(N-p_1m)'})\\
&\qquad\qquad\qquad\qquad\qquad|U_{p_1^{a_{p_1m}+1}}|W_{p_1})|U_{p_2^{b_{p_1m}+1}}|W_{p_2})|W_{p_1}\\
&+p_2\sum_{m=0}^{p_1-1}\sum_{\substack{i,j\in\Z/p_{1}\Z \\ p_{2}ij\equiv m\:(p_1)}} \dim W^{(p_2 i,p_2 j)}_{m/p_1} (-p_2^{b_{p_2m}+1}(-p_1^{a_{p_2m}+1}((N-p_2m)'\Zhat_E|T_{(N-p_2m)'})\\
&\qquad\qquad\qquad\qquad\qquad|U_{p_1^{a_{p_2m}+1}}|W_{p_1})|U_{p_2^{b_{p_2m}+1}}|W_{p_2})|W_{p_2}\\
&+\dim V_1^G-C
\end{aligned}
\end{gather}
where we write $N-m=p_1^{a_m}p_2^{b_m}(N-m)'$ with $\gcd(N,(N-m)')=1$ and the constant $C$ is defined by
\begin{gather}\label{eqchVGconst}
\begin{aligned}
C&=c_E(0)+\frac 1N\sum_{m=1}^{N-1}\sum_{\substack{i,j\in\Z/N\Z \\ ij\equiv m\:(N)}} \dim W^{(i,j)}_{m/N} p_1^{a_m+1}p_2^{b_m+1}\sigma((N-m)')c_E(0)\\
&+\frac{1}{p_2}\sum_{m=0}^{p_2-1}\sum_{\substack{i,j\in\Z/p_{2}\Z \\ p_{1}ij\equiv m\:(p_2)}} \dim W^{(p_1 i,p_1 j)}_{m/p_2} p_1^{a_{p_1m}+1}\sigma((N-p_1m)')\left[(p_2^{b_{p_1m}+1}-1)c_E(0)+c_{E,p_2}(0)\right]\\
&+\frac{1}{p_1}\sum_{m=0}^{p_1-1}\sum_{\substack{i,j\in\Z/p_{1}\Z \\ p_{2}ij\equiv m\:(p_1)}} \dim W^{(p_2 i,p_2 j)}_{m/p_1} p_2^{b_{p_2m}+1}\sigma((N-p_2m)')\left[(p_1^{a_{p_2m}+1}-1)c_E(0)+c_{E,p_1}(0)\right].
\end{aligned}
\end{gather}
As in the proof of \Cref{thmdim}, we compute also the expansions of $\ch_{V^G}$ at all other cusps, which is fairly straightforward from the expression in \eqref{eqchVGcomp} using once more the known commutation relations among Hecke and Atkin-Lehner operators in \Cref{propAL}, so we refrain from giving these expansions explicitly for the sake of brevity. 

By \Cref{propEMS}, we have that
$$
\sum_{\fraka} F_\fraka(\tau)=\ch_{V^{\orb(g)}}(\tau)+\ch_{V^{\orb(g^{p_1})}}(\tau)+\ch_{V^{\orb(g^{p_2})}}(\tau)+\ch_{V}(\tau),
$$
where the sum runs over a complete set of representatives of cusps of $\Gamma_0(N)$, which we may and do fix as $\{1/N(\equiv \infty),1/p_1,1/p_2,1(\equiv 0)\}$. It is easy to see that the constant term of $F_\fraka$ is precisely the constant term of $\ch_{V^G}|W_{N\fraka}$ multiplied by the width of the cusp, which in this case is $N\fraka$. Using this observation, we obtain the dimension formula stated in the theorem after some simplification steps.
\end{proof}


\begin{thebibliography}{10}

\bibitem{AGOR}
C.~Alfes, M.~Griffin, K.~Ono, and L.~Rolen.
\newblock Weierstrass mock modular forms and elliptic curves.
\newblock {\em Res. Number Theory}, 1:Art. 24, 31, 2015.

\bibitem{AliMani}
A.~Ali and N.~Mani.
\newblock Shifted convolution {$L$}-series values for elliptic curves.
\newblock {\em Arch. Math. (Basel)}, 110(3):225--244, 2018.

\bibitem{ACH}
V.~Anagiannis, M.~C.~N. Cheng, and S.~M. Harrison.
\newblock {$K3$} elliptic genus and an umbral moonshine module.
\newblock {\em Comm. Math. Phys.}, 366(2):647--680, 2019.

\bibitem{Beneish1}
L.~Beneish.
\newblock Quasimodular moonshine and arithmetic connections.
\newblock {\em Trans. Amer. Math. Soc.}, 372(12):8793--8813, 2019.

\bibitem{BeneishLarson}
L.~Beneish and H.~Larson.
\newblock Traces of singular values of {H}auptmoduln.
\newblock {\em Int. J. Number Theory}, 11(3):1027--1048, 2015.

\bibitem{Borcherds}
R.~E. Borcherds.
\newblock Monstrous moonshine and monstrous {L}ie superalgebras.
\newblock {\em Invent. Math.}, 109(2):405--444, 1992.

\bibitem{BOOK}
K.~Bringmann, A.~Folsom, K.~Ono, and L.~Rolen.
\newblock {\em Harmonic {M}aass forms and mock modular forms: theory and
  applications}, volume~64 of {\em American Mathematical Society Colloquium
  Publications}.
\newblock American Mathematical Society, Providence, RI, 2017.

\bibitem{BKLOR}
K.~Bringmann, B.~Kane, S.~L\"{o}brich, K.~Ono, and L.~Rolen.
\newblock On divisors of modular forms.
\newblock {\em Adv. Math.}, 329:541--554, 2018.

\bibitem{BKLOR2}
K.~Bringmann, B.~Kane, S.~L\"{o}brich, K.~Ono, and L.~Rolen.
\newblock Corrigendum to ``on divisors of modular forms'' [adv. math. {\bf 329}
  (2018), 541--554].
\newblock Adv. Math., to appear.

\bibitem{BruinierHabil}
J.~H. Bruinier.
\newblock {\em Borcherds products on {O}(2, {$l$}) and {C}hern classes of
  {H}eegner divisors}, volume 1780 of {\em Lecture Notes in Mathematics}.
\newblock Springer-Verlag, Berlin, 2002.

\bibitem{CM16}
S.~Carnahan and M.~Miyamoto.
\newblock Regularity of fixed-point vertex operator subalgebras, 2016.

\bibitem{Carnahan}
S.~H. Carnahan.
\newblock Generalized {M}oonshine {IV}: {M}onstrous {L}ie {A}lgebras.
\newblock preprint, arxiv:1208.6254, 2012.

\bibitem{CL19}
G.~Chenevier and J.~Lannes.
\newblock {\em Automorphic forms and even unimodular lattices}, volume~69 of
  {\em Ergebnisse der Mathematik und ihrer Grenzgebiete. 3. Folge. A Series of
  Modern Surveys in Mathematics [Results in Mathematics and Related Areas. 3rd
  Series. A Series of Modern Surveys in Mathematics]}.
\newblock Springer, Cham, 2019.
\newblock Kneser neighbors of Niemeier lattices, Translated from the French by
  Reinie Ern\'{e}.

\bibitem{CD2}
M.~C.~N. Cheng and J.~F.~R. Duncan.
\newblock Meromorphic {J}acobi forms of half-integral index and umbral
  moonshine modules.
\newblock {\em Comm. Math. Phys.}, 370(3):759--780, 2019.

\bibitem{CS}
H.~Cohen and F.~Str\"{o}mberg.
\newblock {\em Modular forms}, volume 179 of {\em Graduate Studies in
  Mathematics}.
\newblock American Mathematical Society, Providence, RI, 2017.
\newblock A classical approach.

\bibitem{CN}
J.~H. Conway and S.~P. Norton.
\newblock Monstrous moonshine.
\newblock {\em Bull. London Math. Soc.}, 11(3):308--339, 1979.

\bibitem{DHVW2}
L.~Dixon, J.~A. Harvey, C.~Vafa, and E.~Witten.
\newblock Strings on orbifolds.
\newblock {\em Nuclear Phys. B}, 261(4):678--686, 1985.

\bibitem{DHVW1}
L.~Dixon, J.~A. Harvey, C.~Vafa, and E.~Witten.
\newblock Strings on orbifolds. {II}.
\newblock {\em Nuclear Phys. B}, 274(2):285--314, 1986.

\bibitem{DLM}
C.~Dong, H.~Li, and G.~Mason.
\newblock Modular-invariance of trace functions in orbifold theory and
  generalized {M}oonshine.
\newblock {\em Comm. Math. Phys.}, 214(1):1--56, 2000.

\bibitem{DLNg}
C.~Dong, X.~Lin, and S.-H. Ng.
\newblock Congruence property in conformal field theory.
\newblock {\em Algebra Number Theory}, 9(9):2121--2166, 2015.

\bibitem{DM97}
C.~Dong and G.~Mason.
\newblock On quantum {G}alois theory.
\newblock {\em Duke Math. J.}, 86(2):305--321, 1997.

\bibitem{DRX15}
C.~Dong, L.~Ren, and F.~Xu.
\newblock On orbifold theory.
\newblock {\em Advances in Mathematics}, 321:1--30, 07 2017.

\bibitem{D}
J.~F.~R. Duncan.
\newblock From the {M}onster to {T}hompson to {O}'{N}an.
\newblock {\em Contemp. Math.}, to appear.
\newblock Proceedings of the Conference on Vertex Operator Algebras, Number
  Theory and Related Topics.

\bibitem{DH}
J.~F.~R. Duncan and J.~A. Harvey.
\newblock The umbral moonshine module for the unique unimodular {N}iemeier root
  system.
\newblock {\em Algebra Number Theory}, 11(3):505--535, 2017.

\bibitem{DM}
J.~F.~R. Duncan and S.~Mack-Crane.
\newblock The moonshine module for {C}onway's group.
\newblock {\em Forum Math. Sigma}, 3:e10, 52, 2015.

\bibitem{DMOnat}
J.~F.~R. Duncan, M.~H. Mertens, and K.~Ono.
\newblock Pariah moonshine.
\newblock {\em Nature Communications}, 8(670), 2017.

\bibitem{DMO}
J.~F.~R. Duncan, M.~H. Mertens, and K.~Ono.
\newblock O'nan moonshine and arithmetic.
\newblock {\em Amer. J. Math.}, to appear.
\newblock preprint available at arxiv:1702.03516.

\bibitem{DD}
J.~F.~R. Duncan and A.~O'Desky.
\newblock Super vertex algebras, meromorphic {J}acobi forms and umbral
  moonshine.
\newblock {\em J. Algebra}, 515:389--407, 2018.

\bibitem{EMS15}
J.~van Ekeren, S.~M\"oller, and N.~R. Scheithauer.
\newblock Construction and classification of holomorphic vertex operator
  algebras.
\newblock {\em J. Reine Angew. Math.}, to appear.
\newblock preprint available at arxiv:1507.08142.

\bibitem{MSV}
J.~van Ekeren, S.~M\"oller, and N.~R. Scheithauer.
\newblock Dimension formulae in genus 0 and uniqueness of vertex operator
  algebras.
\newblock {\em Int. Math. Res. Not.}, to appear.
\newblock preprint available at arxiv:1704.00478.

\bibitem{FBZ}
E.~Frenkel and D.~Ben-Zvi.
\newblock {\em Vertex algebras and algebraic curves}, volume~88 of {\em
  Mathematical Surveys and Monographs}.
\newblock American Mathematical Society, Providence, RI, second edition, 2004.

\bibitem{FHL93}
I.~B. Frenkel, Y.-Z. Huang, and J.~Lepowsky.
\newblock On axiomatic approaches to vertex operator algebras and modules.
\newblock {\em Mem. Amer. Math. Soc.}, 104(494):viii+64, 1993.

\bibitem{FLM84}
I.~B. Frenkel, J.~Lepowsky, and A.~Meurman.
\newblock A natural representation of the {F}ischer-{G}riess {M}onster with the
  modular function {$J$} as character.
\newblock {\em Proc. Nat. Acad. Sci. U.S.A.}, 81(10, , Phys. Sci.):3256--3260,
  1984.

\bibitem{FLM83}
I.~B. Frenkel, J.~Lepowsky, and A.~Meurman.
\newblock A moonshine module for the {M}onster.
\newblock In {\em Vertex operators in mathematics and physics ({B}erkeley,
  {C}alif., 1983)}, volume~3 of {\em Math. Sci. Res. Inst. Publ.}, pages
  231--273. Springer, New York, 1985.

\bibitem{FLM}
I.~B. Frenkel, J.~Lepowsky, and A.~Meurman.
\newblock {\em Vertex operator algebras and the {M}onster}, volume 134 of {\em
  Pure and Applied Mathematics}.
\newblock Academic Press, Inc., Boston, MA, 1988.

\bibitem{PARI}
The~PARI Group.
\newblock {\em PARI/GP version {\tt 2.11.1}}.
\newblock Univ. Bordeaux, 2018.
\newblock \url{http://pari.math.u-bordeaux.fr/}.

\bibitem{Guerzhoy2}
P.~Guerzhoy.
\newblock On {Z}agier's adele.
\newblock {\em Res. Math. Sci.}, 1:Art. 7, 19, 2014.

\bibitem{Guerzhoy1}
P.~Guerzhoy.
\newblock A mixed mock modular solution of the {K}aneko-{Z}agier equation.
\newblock {\em Ramanujan J.}, 36(1-2):149--164, 2015.

\bibitem{Hecke}
E.~Hecke.
\newblock Theorie der {E}isensteinschen {R}eihen h\"{o}herer {S}tufe und ihre
  {A}nwendung auf {F}unktionentheorie und {A}rithmetik.
\newblock {\em Abh. Math. Sem. Univ. Hamburg}, 5(1):199--224, 1927.

\bibitem{Hoehn}
G.~H\"ohn.
\newblock {\em Selbstduale Vertexoperatorsuperalgebren und das Babymonster}.
\newblock PhD thesis, Rheinische Firedrich-Wilhelms-Universit\"at Bonn, 1995.

\bibitem{Kangetal}
D.~Jeon, S.-Y. Kang, and C.~H. Kim.
\newblock Hecke systems of harmonic {M}aass functions and its applications to
  genus $1$ modular curves.
\newblock preprint, submitted for publication.

\bibitem{Kuznetsov}
N.~V. Kuznecov.
\newblock The {P}etersson conjecture for cusp forms of weight zero and the
  {L}innik conjecture. {S}ums of {K}loosterman sums.
\newblock {\em Mat. Sb. (N.S.)}, 111(153)(3):334--383, 479, 1980.

\bibitem{LL12}
J.~Lepowsky and H.~Li.
\newblock {\em Introduction to vertex operator algebras and their
  representations}, volume 227 of {\em Progress in Mathematics}.
\newblock Birkh\"{a}user Boston, Inc., Boston, MA, 2004.

\bibitem{Matthes}
R.~Matthes.
\newblock An elementary proof of a formula of {K}uznecov for {K}loosterman
  sums.
\newblock {\em Results Math.}, 18(1-2):120--124, 1990.

\bibitem{Miy15}
M.~Miyamoto.
\newblock {$C_2$}-cofiniteness of cyclic-orbifold models.
\newblock {\em Comm. Math. Phys.}, 335(3):1279--1286, 2015.

\bibitem{Yam}
M.~Miyamoto and K.~Tanabe.
\newblock Uniform product of {$A_{g,n}(V)$} for an orbifold model {$V$} and
  {$G$}-twisted {Z}hu algebra.
\newblock {\em J. Algebra}, 274(1):80--96, 2004.

\bibitem{Mollerthesis}
S.~M\"oller.
\newblock {\em A {C}yclic {O}rbifold {T}heory for {H}olomorphic {V}ertex
  {O}perator {A}lgebras and {A}pplications}.
\newblock PhD thesis, Technische Universit\"at Darmstadt, 2016.

\bibitem{EMS2}
S.~M\"oller and N.~R. Scheithauer.
\newblock Dimension formulae and generalised deep holes of the {L}eech lattice
  vertex operator algebra.
\newblock preprint available at arxiv:1910.04947.

\bibitem{Norton}
S.~P. Norton.
\newblock Generalized moonshine.
\newblock In {\em The Arcata Conference on Representations of Finite Groups
  (Arcata, Calif.,1986}, volume~47 of {\em Proc. Sympos. Pure Math.}, pages
  209--210. Amer. {M}ath. {S}oc., {P}rovidence, {R}{I}, 1987.

\bibitem{Norton2}
S.~P. Norton.
\newblock From moonshine to the {M}onster.
\newblock In {\em Proceedings on {M}oonshine and related topics (Montr\'eal,
  QC, 1999}, {C}{R}{M} {P}roc. {L}ecture {N}otes, pages 163--161. Amer. {M}ath.
  {S}oc., {P}rovidence, {R}{I}, 2001.

\bibitem{Ono}
K.~Ono.
\newblock Unearthing the visions of a master: harmonic {M}aass forms and number
  theory.
\newblock In {\em Current developments in mathematics, 2008}, pages 347--454.
  Int. Press, Somerville, MA, 2009.

\bibitem{Schellekens}
A.~N. Schellekens.
\newblock Meromorphic {$c=24$} conformal field theories.
\newblock {\em Comm. Math. Phys.}, 153(1):159--185, 1993.

\bibitem{Schneider}
Th. Schneider.
\newblock {\em Einf\"{u}hrung in die transzendenten {Z}ahlen}.
\newblock Springer-Verlag, Berlin-G\"{o}ttingen-Heidelberg, 1957.

\bibitem{Selberg}
A.~Selberg.
\newblock {\"U}ber die {F}ourierkoeffizienten elliptischer {M}odulformen
  negativer {D}imension.
\newblock In {\em C. R. Neuvi\`eme Congres Math. Scandinaves, Helsingfors},
  pages 320--322, 1938.
\newblock Reprinted in Collected Papers, {\bf 1}, no. 3.

\bibitem{Thompson1}
J.~G. Thompson.
\newblock Finite groups and modular functions.
\newblock {\em Bull. London Math. Soc.}, 11(3):347--351, 1979.

\bibitem{Thompson2}
J.~G. Thompson.
\newblock Some numerology between the {F}ischer-{G}riess {M}onster and the
  elliptic modular function.
\newblock {\em Bull. London Math. Soc.}, 11(3):352--353, 1979.

\bibitem{Zhu}
Y.~Zhu.
\newblock Modular invariance of characters of vertex operator algebras.
\newblock {\em J. Amer. Math. Soc.}, 9(1):237--302, 1996.

\end{thebibliography}
\end{document}